\documentclass [11pt, a4paper, twoside]{article}

\usepackage {amsmath}
\usepackage{hyperref}
\usepackage{diagrams, a4, indentfirst, latexsym, theorem, QED,
  amssymb, epsfig}

\numberwithin{equation}{section}

\linespread{1.1}
\addtolength{\textwidth}{1pt}


\newarrow{Fib}----{>>}
\newarrow{Cof}>--->


\pagestyle{myheadings}
\newcommand\nothing[1]{\relax}
\markboth{\textsc{T.~H\"uttemann}}{\textsl{On the derived category of
    a regular toric scheme\/}}

\theoremstyle{change}
\theorembodyfont{\it}
\newtheorem{theorem}{Theorem.}[subsection]
\newtheorem{proposition}[theorem]{Proposition.}
\newtheorem{lemma}[theorem]{Lemma.}
\newtheorem{corollary}[theorem]{Corollary.}
{\theorembodyfont{\upshape}
\newtheorem{definition}[theorem]{Definition.}
\newtheorem{remark}[theorem]{Remark.}
\newtheorem{example}[theorem]{Example.}
\newtheorem{notation}[theorem]{Notation.}
\newtheorem{construction}[theorem]{Construction.}
\newtheorem{numpar}[theorem]{\unskip.}

}


\newcommand{\holim}{\mathrm{holim}}

\newcommand{\bR}{{\mathbb R}}
\newcommand{\bZ}{{\mathbb Z}}
\newcommand{\bC}{{\mathbb C}}
\newcommand{\bN}{{\mathbb N}}
\newcommand{\iso}{\cong}
\newcommand{\tensor}{\otimes}
\newcommand{\id}{\mathrm{id}}
\newcommand{\ie}{{\it i.e.}}
\newcommand{\resp}{{\it resp.}}
\newcommand{\pre}{\mathbf{Pre}}
\newcommand{\varpre}{{\mathfrak{Pre}}}
\newcommand{\co}{\mathcal{O}}
\newcommand{\hco}{\hat\co}
\newcommand{\coweq}{\sim\mathrm{co}}
\newcommand{\fan}{\Sigma}
\newcommand{\op}{\mathrm{op}}
\newcommand{\st}{\mathrm{st}}
\newcommand{\qco}{\mathfrak{Qco}}

\date{}

\begin{document}

\title{On the derived category of a regular toric scheme}
\author{Thomas H\"uttemann}
\maketitle

\centerline {\it Queen's University Belfast, Pure Mathematics Research
Centre}
\centerline {\it Belfast BT7~1NN, Northern Ireland, UK}
\centerline {e-mail: \texttt{t.huettemann@qub.ac.uk}}

\vglue 2\bigskipamount \hrule \medskip

{\footnotesize Let $X$ be a quasi-compact scheme, equipped with an
  open covering by affine schemes $U_\sigma = \mathrm{Spec}\,
  A^\sigma$. A quasi-coherent sheaf on~$X$ gives rise, by taking
  sections over the~$U_\sigma$, to a diagram of modules over the
  coordinate rings~$A^\sigma$, indexed by the intersection
  poset~$\fan$ of the covering. If $X$ is a regular toric scheme over
  an arbitrary commutative ring, we prove that the unbounded derived
  category of quasi-coherent sheaves on~$X$ can be obtained from a
  category of $\fan^\op$-diagrams of chain complexes of modules by
  inverting maps which induce homology isomorphisms on hyper-derived
  inverse limits. Moreover, we show that there is a finite set of weak
  generators. If $\fan$ is complete, there is exactly one generator
  for each cone in the fan~$\fan$.

  The approach taken uses the machinery of
  \textsc{Bousfield}-\textsc{Hirschhorn} colocalisation. The first
  step is to characterise colocal objects; these turn out to be
  homotopy sheaves in the sense that chain complexes over different
  open sets~$U_\sigma$ agree on intersections up to
  quasi-isomorphism. In a second step it is shown that the homotopy
  category of homotopy sheaves is equivalent to the derived category
  of~$X$.  \pushright{(\today)}

\smallskip

\noindent
{\it AMS subject classification (2000):\/} primary 18F20, secondary
18E30, 18G55, 55U35

\noindent
{\it Additional keywords:\/} Colocalisation, \textsc{Quillen }model
structures, generators of derived category}
\medskip \hrule \vglue 2\bigskipamount

\section*{Introduction}

A toric scheme $X=X_\fan$ over a commutative ring~$A$ comes equipped
with a preferred covering by open affine sets. From a combinatorial
point of view $X$ is specified by a finite fan~$\fan$ in $\bZ^n
\tensor \bR \iso \bR^n$, and each cone $\sigma \in \fan$ corresponds
to an $A$-algebra $A^\sigma$ and hence to an open affine set $U_\sigma
= \mathrm{Spec} (A^\sigma) \subseteq X$. By evaluating on the open
sets~$U_\sigma$ we see that a chain complex~$Y$ of
quasi-coherent sheaves on~$X_\fan$ can thus be specified by a
collection of $A^\sigma$-module chain complexes $Y^\sigma$ for $\sigma
\in \fan$, subject to certain compatibility conditions. These include,
among other things, isomorphisms of chain complexes
\begin{equation}
  \label{eq:compat_iso}
  A^\tau \tensor_{A^\sigma} Y^\sigma \iso Y^\tau
\end{equation}
for all pairs of cones $\tau \subseteq \sigma$ in~$\fan$; in the
language of sheaves, this means that we recover $Y^\tau$ by
restricting the sections $Y^\sigma$ over~$U_\sigma$ to the smaller
open set~$U_\tau$.

The main result of this paper is that the derived category of~$X_\fan$
can be described using collections of chain complexes which do not
necessarily satisfy the compatibility
condition~(\ref{eq:compat_iso}). In more technical parlance, we will
prove that the category of (twisted) diagrams
\[\fan^\op \rTo \hbox{chain complexes}, \ \sigma \mapsto Y^\sigma\]
admits a ``colocal'' model structure whose homotopy category is
equivalent to the (unbounded) derived category $D(\qco(X_\fan))$,
cf.~Theorem~\ref{thm:D_of_X_from_diagrams}. In the process we will
also identify explicitly a finite set of weak generators
of~$D(\qco(X_\fan))$, cf.~Construction~\ref{constr:R_fan}. In case
$\fan$ is a complete fan, the description is particularly simple: It
suffices to take one line bundle $\mathcal{O(\vec\sigma)}$ for each
cone $\sigma \in \fan$, cf.~Example~\ref{example:complete_fan} and
Corollary~\ref{cor:weak_generators_of_D}.

The cofibrant objects of the colocal model structure are characterised
by a weak form of compatibility condition
(Theorem~\ref{thm:homsheaf_colocal}): Instead of requiring
isomorphisms as in~(\ref{eq:compat_iso}) we ask for quasi-isomorphisms
\[A^\tau \tensor_{A^\sigma} Y^\sigma \simeq Y^\tau\] for all pairs of
cones $\tau \subseteq \sigma$ in~$\fan$. We call the resulting
structure a {\it homotopy sheaf}. Clearly every chain complex of
quasi-coherent sheaves is a homotopy sheaf.

A main ingredient of the proof is that the homotopy category of
homotopy sheaves is nothing but the (unbounded) derived category of
quasi-coherent sheaves on~$X_\fan$
(Theorem~\ref{thm:derived_is_correct}); this result is valid for
arbitrary toric schemes defined over a commutative ring~$A$, and
holds more generally for quasi-compact $A$-schemes equipped with a
finite semi-separating affine covering. Note that every quasi-compact
separated scheme can be equipped with such a covering. The main
technical result is that homotopy sheaves can be replaced, up to
quasi-isomorphism on the covering sets, by quasi-coherent sheaves
(Lemma~\ref{lem:pullback_gluing}).

The paper illustrates the philosophy that homotopy sheaves are a
flexible substitute for quasi-coherent sheaves which allow for easier
handling in a homotopy-theoretic setting.

\medbreak

We will use the language of \textsc{Quillen} model categories as
presented by \textsc{Dwyer} and \textsc{Spalinski}
\cite{Dwyer-Spalinski}, \textsc{Hirschhorn} \cite{Hirsch-Loc} and
\textsc{Hovey} \cite{MR1650134}. Another essential ingredient is the
language of toric varieties, and the corresponding combinatorial
objects (cones and fans); a full treatment can be found in
\textsc{Fulton}'s book~\cite{Fulton-toric}. We will also have occasion
to use variants of diagram categories and their associated model
category structures as introduced by \textsc{R\"ondigs} and the
author~\cite{HR-Twisted}.

\section{Chain complexes}

\subsection{Model structure and resolutions}

Let $A$ denote a ring with unit.  The category $\mathrm{Ch}_A$ of
(possibly unbounded) chain complexes of left $A$-modules will be
considered with the {\it projective model structure\/}: Weak
equivalences are the quasi-isomorphisms, and fibrations are those maps
which are surjective in each degree
\cite[Theorem~2.3.11]{MR1650134}. A particularly convenient feature of
this model structure is that {\it all chain complexes are fibrant}.

Also of interest is the full subcategory $\mathrm{Ch}_A^+$ of
non-negative chain complexes. It is a model category with weak
equivalences and cofibrations as before, but with fibrations the maps
which are surjective in positive degrees
\cite[Theorem~7.2]{Dwyer-Spalinski}. The category $\mathrm{Ch}_A^+$ is
equivalent to the category $\mathrm{sMod}_A$ of simplicial
$A$-modules; the equivalence is given by the reduced chain complex
functor $N \colon \mathrm{sMod}_A \rTo \mathrm{Ch}_A^+$ and its
inverse, the \textsc{Dold\/}-\textsc{Kan} functor~$W$. Given a chain
complex $C \in \mathrm{Ch}_A^+$ the result of applying~$W$ is the
simplicial $A$-module
\[\bN \ni n \mapsto \hom_{\mathrm{Ch}_A} (N(A[\Delta^n]),\, C)\]
where $\Delta^n$ denotes the standard $n$-simplex. The functors~$N$
and~$W$ preserve and detect weak equivalences.

Note that we can consider $N$ as a functor with values in the category
$\mathrm{Ch}_A$. Similarly, the definition of~$W$ above makes sense even
if~$C$ is an unbounded chain complex. In this context, the following
is known to be true:

\begin{lemma}
  \label{lem:dold_kan_adjunction}
  Let $N \colon \mathrm{sMod}_A \rTo \mathrm{Ch}_A$ and $W \colon
  \mathrm{Ch}_A \rTo \mathrm{sMod}_A$ be defined as above.
  \begin{enumerate}
  \item[{\rm (1)}] The functor $N$ is left \textsc{Quillen\/} with right
    adjoint~$W$.
  \item[{\rm (2)}] The functor $N$ preserves and detects weak equivalences.
  \item[{\rm (3)}] A map~$f$ of chain complexes induces an $H_n$-isomorphism for
    all $n \geq 0$ if and only if $W(f)$ is a weak equivalence of
    simplicial modules. \qed
  \end{enumerate}
\end{lemma}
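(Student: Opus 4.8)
The plan is to reduce all three claims to the classical \textsc{Dold}--\textsc{Kan} equivalence $N \colon \mathrm{sMod}_A \to \mathrm{Ch}_A^+$ together with the two adjunctions relating $\mathrm{Ch}_A^+$ to the unbounded category $\mathrm{Ch}_A$: the (cocontinuous) inclusion $\iota$ and its right adjoint, the good truncation $\tau_{\geq 0}$.

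For the adjunction in~(1), I would first observe that $N \colon \mathrm{sMod}_A \to \mathrm{Ch}_A$ preserves all colimits: levelwise colimits of simplicial modules are carried by the classical equivalence into $\mathrm{Ch}_A^+$, and the inclusion $\mathrm{Ch}_A^+ \hookrightarrow \mathrm{Ch}_A$ is cocontinuous since a colimit of non-negative complexes stays non-negative. A cocontinuous functor out of $\mathrm{sMod}_A = \mathrm{Fun}(\Delta^\op, \mathrm{Mod}_A)$ has a right adjoint computed on the corepresenting free modules $A[\Delta^n]$; because $\hom_{\mathrm{sMod}_A}(A[\Delta^n], X) \iso X_n$ as $A$-modules, that right adjoint is exactly $C \mapsto \big( n \mapsto \hom_{\mathrm{Ch}_A}(N(A[\Delta^n]), C) \big) = W(C)$. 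It then remains to verify the triangle identities on the generators $A[\Delta^n]$ and extend by cocontinuity.

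For the \textsc{Quillen} property, both model categories are cofibrantly generated and $N$ is cocontinuous, so it suffices to send the generating (trivial) cofibrations of $\mathrm{sMod}_A$ to (trivial) cofibrations of the projective structure on $\mathrm{Ch}_A$ \cite{MR1650134}. The generators are $A[\partial\Delta^n] \to A[\Delta^n]$ and the horn inclusions $A[\Lambda^n_k] \to A[\Delta^n]$; each image is a degreewise-split monomorphism of bounded, non-negative complexes of free $A$-modules whose cokernel is again of this form, hence cofibrant, and in the second case additionally acyclic because $(\Delta^n, \Lambda^n_k)$ has vanishing relative homology. A degreewise-split injection with cofibrant, resp.\ cofibrant and acyclic, cokernel is a cofibration, resp.\ a trivial cofibration, so $N$ is left \textsc{Quillen}. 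Part~(2) is then immediate from the standard identification $H_n(N(X)) \iso \pi_n(X)$: a map is a weak equivalence of simplicial modules exactly when it is a $\pi_*$-isomorphism, \ie\ exactly when $N$ of it is a quasi-isomorphism.

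The heart of the argument is~(3), where I would identify $W$ on unbounded complexes with the truncation. Since $N(A[\Delta^n])$ is non-negative, the adjunction $\iota \dashv \tau_{\geq 0}$ gives $\hom_{\mathrm{Ch}_A}(N(A[\Delta^n]), C) \iso \hom_{\mathrm{Ch}_A^+}(N(A[\Delta^n]), \tau_{\geq 0}C)$ naturally in $n$, whence $W(C) \iso W(\tau_{\geq 0}C)$ with the right-hand $W$ the classical inverse of $N$ on $\mathrm{Ch}_A^+$. Therefore $\pi_n(W(C)) \iso H_n(\tau_{\geq 0}C)$, and a direct inspection of the good truncation shows $H_n(\tau_{\geq 0}C) \iso H_n(C)$ for every $n \geq 0$, the truncation altering homology only in negative degrees. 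By naturality, $W(f)$ is a $\pi_*$-isomorphism if and only if $H_n(f)$ is an isomorphism for all $n \geq 0$. I expect the main obstacle to be exactly this unbounded bookkeeping: $W$ is only given by a formula, the cofibrations of the projective structure on unbounded $\mathrm{Ch}_A$ are subtler than mere complexes of projectives, and one must keep track of the fact that $W$ detects only the non-negative part of the homology; the truncation identification above is what settles all three points at once.
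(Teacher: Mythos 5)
Your proposal is correct, but note that the paper offers no proof at all for this lemma: it is stated as ``known to be true'' and closed with a \verb|\qed|, so there is nothing to compare against step by step. What you have written is a legitimate, self-contained justification. The verification of (1) via cocontinuity of $N$, the formula for the right adjoint on the corepresenting objects $A[\Delta^n]$, and the check on generating (trivial) cofibrations is sound: $N(A[\partial\Delta^n])\to N(A[\Delta^n])$ and $N(A[\Lambda^n_k])\to N(A[\Delta^n])$ are degreewise split injections with cofibrant (respectively cofibrant acyclic) cokernels --- indeed the cokernels are $S_n(A)$ and, up to sign, $D_n(A)$ --- and this characterises (trivial) cofibrations in the projective structure. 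The real content is your treatment of (3): the identification $W(C)\iso \Gamma(\tau_{\geq 0}C)$, with $\Gamma$ the classical \textsc{Dold}--\textsc{Kan} inverse and $\tau_{\geq 0}$ the good truncation right adjoint to the inclusion $\mathrm{Ch}_A^+\hookrightarrow \mathrm{Ch}_A$, together with $H_n(\tau_{\geq 0}C)\iso H_n(C)$ for $n\geq 0$, gives $\pi_n(W(C))\iso H_n(C)$ naturally, which is exactly the asserted equivalence and also explains why $W$ sees only non-negative homology. Your closing caveat is apt: this truncation bookkeeping is precisely the point at which the unbounded extension of the classical equivalence could go wrong, and you have handled it correctly.
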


\begin{lemma}
  \label{lem:ch_A_is_cellular}
  The category $\mathrm{Ch}_A$ is a cellular model category in the
  sense of \cite[\S12]{Hirsch-Loc}; the set of generating cofibrations
  is
  \[I:= \{ S_{n-1}(A) \rTo D_n(A) \,|\, n \in \bZ \} \ ,\]
  and the set of generating acyclic cofibrations is
  \[J:= \{ 0 \rTo D_n(A) \,|\, n \in \bZ \} \ .\]
  Here $S_k(A)$ denotes the chain complex which has $A$ in degree~$k$
  and is trivial everywhere else, and $D_n(A)$ denotes the chain
  complex which has $A$ in degrees $n$ and $n-1$ with boundary map the
  identity, and is trivial everywhere else.
\end{lemma}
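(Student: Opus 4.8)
The plan is to verify that $\mathrm{Ch}_A$ with the projective model structure satisfies the three defining properties of a cellular model category from Hirschhorn: (i) the domains and codomains of the generating cofibrations $I$ are compact relative to $I$, (ii) the domains of the generating acyclic cofibrations $J$ are small relative to the cofibrations, and (iii) cofibrations are effective monomorphisms. Before any of this, I would confirm that $I$ and $J$ as displayed do generate the projective model structure in the sense that they permit the small object argument; this is essentially the content of the cited Theorem~2.3.11 of Hovey, so I would take the identification of $I$ and $J$ as generating (acyclic) cofibrations as already established and concentrate on the cellularity conditions.

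First I would treat smallness and compactness together. Every object of $\mathrm{Ch}_A$ is small relative to the whole category: a chain complex is, in each degree, a module, and modules are small (bounded by the cardinality of the module), so a colimit argument degree-by-degree shows that $\hom$ out of any fixed complex commutes with sufficiently long filtered colimits of complexes. In particular the domains and codomains of $I$ and $J$ — namely $S_{n-1}(A)$, $D_n(A)$, and $0$, each of which is a free $A$-module concentrated in one or two degrees — are small relative to all of $\mathrm{Ch}_A$, hence in particular relative to $I$ and to the cofibrations. For genuine \emph{compactness} in Hirschhorn's sense I would observe that $S_{n-1}(A)$ and $D_n(A)$ are \emph{finitely presented} objects built from the single free module $A$ in one or two degrees, so that a map out of them into a relative $I$-cell complex, which is an $\omega$-filtered colimit, factors through a finite stage; the explicit one- or two-degree structure makes the factorisation transparent.

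The remaining and most delicate condition is that every cofibration is an effective monomorphism, i.e.\ is the equaliser of its cokernel pair. Here I would use the concrete description of cofibrations in $\mathrm{Ch}_A$: a map is a cofibration if and only if it is a degreewise split injection with cofibrant (degreewise projective) cokernel. Since cofibrations are in particular degreewise injections of modules, and $A$-$\mathrm{Mod}$ is abelian so that every monomorphism is the equaliser of its cokernel pair, I would check that this equaliser property is inherited degreewise and is compatible with the boundary maps — i.e.\ that forming the cokernel pair in $\mathrm{Ch}_A$ is computed degreewise and that the induced equaliser is again computed degreewise. This reduces the claim to the corresponding elementary fact in the abelian category of $A$-modules.

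The hard part will be condition (iii), the effective-monomorphism requirement, because it is the one genuinely new to the chain-complex setting rather than a formal consequence of module-theoretic smallness; the main technical care needed is to confirm that cokernel pairs and their equalisers in $\mathrm{Ch}_A$ are formed degreewise, so that the known behaviour in $\mathrm{Mod}_A$ transports cleanly. Once (iii) is settled, the identification of $I$ and $J$ with the displayed sets is immediate from the standard generators for the projective model structure, and the cellularity conditions (i) and (ii) follow from the smallness and finite-presentation observations above, completing the verification.
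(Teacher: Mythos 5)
Your proposal is correct in outline but takes a genuinely different route from the paper, whose entire proof of this Lemma is a one-line citation: ``This is the content of \cite[Theorem~2.3.11]{MR1650134}.'' Hovey's theorem identifies $I$ and $J$ as generating (acyclic) cofibrations of a finitely generated model structure; the specifically Hirschhorn-style cellularity conditions --- compactness of the domains and codomains of $I$, smallness of the domains of $J$, and cofibrations being effective monomorphisms --- are exactly what you set out to verify directly, so your argument is more self-contained and supplies detail the paper elides. Three small repairs are needed. (a) In the unbounded projective structure a cofibration is a degreewise split injection with \emph{cofibrant} cokernel; cofibrant complexes are degreewise projective but not conversely (the standard counterexample is the acyclic complex of free $\bZ/4\bZ$-modules with differential given by multiplication by~$2$), so the parenthetical ``(degreewise projective)'' should be dropped --- your argument only uses degreewise injectivity in any case. (b) For effective monomorphisms the cleanest formulation is that $\mathrm{Ch}_A$ is an abelian category whose (co)limits are computed degreewise, and in any abelian category a monomorphism is the equaliser of its cokernel pair; this is precisely the content of your degreewise reduction. (c) A relative $I$-cell complex is in general a \emph{transfinite} composition, not merely an $\omega$-filtered colimit, so the compactness claim should be phrased as in \cite[\S10.8, \S11.4]{Hirsch-Loc}: a map from the finitely presented complex $S_{n-1}(A)$ or $D_n(A)$ into a presented relative $I$-cell complex factors through a finite subcomplex, which follows because each cell attachment contributes a finitely presented summand in each degree. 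With these adjustments your verification goes through and is, if anything, more informative than the paper's citation.
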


\begin{proof}
  This is the content of~\cite[Theorem~2.3.11]{MR1650134}. 
\end{proof}

\begin{lemma}
  \label{lem:cosimiplicial_resolution_ch_a}
  Let $C \in \mathrm{Ch}_A$ be a cofibrant chain complex. The
  cosimplicial chain complex $N(A[\Delta^\bullet]) \tensor_A C$, \ie,
  the cosimplicial object
  \[\bN \ni n \mapsto N(A[\Delta^n]) \tensor_A C \ ,\]
  defines a cosimplicial resolution \cite[\S16.1] {Hirsch-Loc} of~$C$; the
  structure map to the constant cosimplicial object $\mathrm{cc}^* C$ is
  induced by the unique map $\Delta^n \rTo \Delta^0$ and the natural
  isomorphism $N(A[\Delta^0]) \tensor_A C \iso C$. The $n$-th latching object
  is the chain complex $L_n (N(A[\Delta^\bullet]) \tensor_A C) =
  N(A[\partial \Delta^n]) \tensor_A C$.
\end{lemma}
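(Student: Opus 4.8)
The plan is to verify directly the two defining properties of a cosimplicial resolution in the sense of \cite[\S16.1]{Hirsch-Loc}: that the cosimplicial chain complex $\Gamma^\bullet := N(A[\Delta^\bullet]) \tensor_A C$ is Reedy cofibrant, and that the augmentation $\Gamma^\bullet \rTo \mathrm{cc}^* C$ is a levelwise weak equivalence. Computing the latching objects along the way is needed both to state and to prove Reedy cofibrancy.

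First I would identify the latching objects. The $n$-th latching object $L_n \Gamma^\bullet$ is a colimit over the non-identity codegeneracies into degree~$n$, and the assignment $K \mapsto N(A[K]) \tensor_A C$ is a composite of colimit-preserving functors: the free-module functor $A[-]$, the normalised chains functor~$N$ (a left adjoint by Lemma~\ref{lem:dold_kan_adjunction}(1)), and $- \tensor_A C$. Hence this functor commutes with the latching colimit, so that $L_n \Gamma^\bullet \iso N(A[L_n \Delta^\bullet]) \tensor_A C$. Invoking the standard identification $L_n \Delta^\bullet \iso \partial\Delta^n$ for the cosimplicial simplicial set $\Delta^\bullet$ yields $L_n \Gamma^\bullet \iso N(A[\partial\Delta^n]) \tensor_A C$, as claimed, with the latching map induced by the inclusion $\partial\Delta^n \rTo \Delta^n$.

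For the levelwise weak equivalence I would argue by chain homotopy, which has the advantage of not requiring~$A$ to be commutative. The collapse $\Delta^n \rTo \Delta^0$ admits a section given by any vertex, and the composite $\Delta^n \rTo \Delta^0 \rTo \Delta^n$ is simplicially homotopic to the identity; applying $N(A[-])$ turns this into a chain homotopy equivalence $N(A[\Delta^n]) \rTo A$ between bounded-below complexes of free $A$-modules. Since $- \tensor_A C$ is additive it preserves chain homotopy equivalences, so in each cosimplicial degree the augmentation $N(A[\Delta^n]) \tensor_A C \rTo A \tensor_A C \iso C$ is a chain homotopy equivalence, in particular a quasi-isomorphism.

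Finally, Reedy cofibrancy amounts to showing that each latching map $N(A[\partial\Delta^n]) \tensor_A C \rTo N(A[\Delta^n]) \tensor_A C$ is a cofibration of $\mathrm{Ch}_A$. The inclusion $\partial\Delta^n \rTo \Delta^n$ induces a degreewise split monomorphism $N(A[\partial\Delta^n]) \rTo N(A[\Delta^n])$ whose cokernel is the normalised chains of the quotient $\Delta^n/\partial\Delta^n \iso S^n$, that is $S_n(A)$ in the notation of Lemma~\ref{lem:ch_A_is_cellular} (all vertices lie in the boundary, so degree zero is killed). Tensoring the resulting degreewise split short exact sequence with~$C$ keeps it degreewise split, so the latching map is a degreewise split monomorphism with cokernel $S_n(A) \tensor_A C \iso C[n]$. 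As $C$ is cofibrant, so is its shift $C[n]$, and a degreewise split monomorphism with cofibrant cokernel is a cofibration. This last step — the interaction of $- \tensor_A C$ with cofibrations, reflecting the pushout-product behaviour of the cell structure of Lemma~\ref{lem:ch_A_is_cellular}, and the point where the cofibrancy hypothesis on~$C$ is essential — is the step I expect to require the most care.
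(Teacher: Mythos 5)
Your proof is correct, and it reaches the Reedy cofibrancy of the latching maps by a genuinely different route from the paper. The paper starts from the known Reedy cofibrancy of $A[\Delta^\bullet]$ in cosimplicial simplicial $A$-modules, pushes the latching cofibrations $A[\partial\Delta^n] \rTo A[\Delta^n]$ through the left \textsc{Quillen} functor~$N$ of Lemma~\ref{lem:dold_kan_adjunction}, and then appeals to the general fact that tensoring with a cofibrant complex preserves cofibrations and colimits; you instead compute the cokernel of $N(A[\partial\Delta^n]) \tensor_A C \rTo N(A[\Delta^n]) \tensor_A C$ explicitly as the shift $S_n(A) \tensor_A C \iso C[n]$ and invoke the characterisation of projective cofibrations in $\mathrm{Ch}_A$ as degreewise split monomorphisms with cofibrant cokernel (Hovey, Proposition~2.3.9, which is in the paper's cited reference though not restated here). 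Your version is more self-contained and makes visible exactly where the cofibrancy of~$C$ enters, at the cost of depending on that one structural characterisation; the paper's version is shorter but leans on the abstract monoidal/Quillen machinery. The identification of latching objects via colimit-preserving functors and $L_n\Delta^\bullet \iso \partial\Delta^n$ is the same in both. You also verify the levelwise weak equivalence via the simplicial contraction of $\Delta^n$ and preservation of chain homotopy equivalences under $-\tensor_A C$, a point the paper's proof passes over as routine; that addition is welcome and correct.
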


\begin{proof}
  The category of cosimplicial objects in~$\mathrm{Ch}_A$ carried a
  \textsc{Reedy} model structures \cite[\S15.3]{Hirsch-Loc}.  To prove
  the Lemma, the non-trivial thing to verify is that
  $N(A[\Delta^\bullet]) \tensor_A C$ is cofibrant with respect to this
  model structure.

  The category of cosimplicial simplicial $A$-modules
  carries a \textsc{Reedy} model structure as well. The object
  $A[\Delta^\bullet]$ is known to be cofibrant, so for all $n \in \bN$
  the latching map \cite[Proposition~16.3.8~(1)]{Hirsch-Loc}
  \[A[\partial \Delta^n] = A[\Delta^\bullet] \tensor \partial \Delta^n
  = L_n A[\Delta^\bullet] \rTo A[\Delta^n] = A[\Delta^\bullet] \tensor
  \Delta^n\]
  is a cofibration of simplicial $A$-modules. Hence we have a
  cofibration of chain complexes
  \[N(L_n A[\Delta^\bullet]) \rTo N(A[\Delta^n])\]
  since the functor~$N$ is left \textsc{Quillen} by
  Lemma~\ref{lem:dold_kan_adjunction}.  Now the functor $N$, being a
  left adjoint, commutes with colimits so that the source of this map
  is isomorphic to $L_n N(A[\Delta^\bullet])$. Taking tensor product
  with a cofibrant chain complex preserves cofibrations and commutes
  with colimits, so by applying $\,\cdot\, \tensor_A C$ we see that
  the latching map
  \[L_n (N(A[\Delta^\bullet]) \tensor_A C) \iso L_n N
  (A[\Delta^\bullet]) \tensor_A C \rTo N(A[\Delta^n]) \tensor_A C\]
  of $N(A[\Delta^\bullet]) \tensor_A C$ is a cofibration as required.
\end{proof}

\subsection{Homotopy limits of diagrams of chain complexes}
\label{sec:homoty-limit}

\begin{definition}
  \label{def:path_factorisation}
  Let $f \colon C \rTo D$ be a map of (possibly) unbounded chain
  complexes. The {\it canonical path space factorisation of~$f$\/} is the
  factorisation $C \rTo^i P(f) \rTo^p D$ where the degree~$n$ part of $P(f)$ is
  $C_n \times D_{n+1} \times D_n$ with differential as specified in the
  following diagram:
  \begin{diagram}[labelstyle=\scriptstyle]
    C_n & \times & D_{n+1} & \times & D_n \\
    \dTo<\partial & \rdTo<{-f} & \dTo<{-\partial} & \ldTo>= & \dTo>\partial\\
    C_{n-1} & \times & D_n & \times & D_{n-1}
  \end{diagram}
  The map $i = (\id, 0, f)$ is a chain homotopy equivalence (with homotopy
  inverse given by $\mathrm{pr}_1$). The map $p = \mathrm{pr}_3$ is levelwise
  surjective, hence $p$~is a fibration in~$\mathrm{Ch}_A$ (in the
  projective model structure).
\end{definition}

In what follows, we will be concerned with diagrams indexed by a
finite fan~$\fan$. A {\it cone\/} in a finite-dimensional real vector
space $N_\bR$ is the positive span of a finite set of vectors
of~$N_\bR$. A {\it fan\/} is a finite collection of cones $\fan =
\{\sigma_1,\, \sigma_2,\, \ldots,\, \sigma_k\}$ which is closed under
taking faces, and satisfies the condition that the intersection of two
cones in~$\fan$ is a face of both cones. We also require that all the
cones are {\it pointed\/}, \ie, have the trivial cone~$\{0\}$ as a
face.  We consider a fan~$\fan$ as a poset ordered by inclusion of
cones or, equivalently, as a category with morphisms given by
inclusion of cones. The trivial cone~$\{0\}$ is initial in the
category~$\fan$.---By abuse of language, we refer to $\dim (N_\bR)$ as
the dimension of~$\fan$.

\begin{definition}
  \label{canonical-f-replacement}
  Let $\fan$ denote a finite fan. Given a diagram of chain complexes
  \[C \colon \fan^\op \rTo \mathrm{Ch}_A, \quad \sigma \mapsto C^\sigma\]
  we define its {\it canonical fibrant replacement}
  \[PC \colon \fan^\op \rTo \mathrm{Ch}_A\]
  inductively as follows. To begin with, set $(PC)^{\{0\}} =
  C^{\{0\}}$. For every $1$-dimensional cone $\rho \in \fan$ factor
  the map $f \colon C^\rho \rTo (PC)^{\{0\}}= C^{\{0\}}$ as
  \[C^\rho \rTo P(f) \rTo (PC)^{\{0\}} \ ,\]
  see Definition~\ref{def:path_factorisation}, and set $(PC)^\rho =
  P(f)$.  Now continue by induction on the dimension: Given a
  positive-dimensional cone $\sigma \in \fan$, factor the map $f
  \colon C^\sigma \rTo \lim_{\tau \subset \sigma} (PC)^\tau$ as
  \[C^\sigma \rTo P(f) \rTo \lim_{\tau \subset \sigma} (PC)^\tau \ ,\]
  and define $(PC)^\sigma = P(f)$.
\end{definition}

There resulting map of diagrams $C \rTo PC$ is an objectwise injective
weak equivalence. By construction the diagram
$PC$ is fibrant in the sense that for all cones $\sigma \in \fan$, the
map
\[(PC)^\sigma \rTo \lim_{\tau \subset \sigma} (PC)^\tau\]
is surjective (the limit taken over all cones strictly contained
in~$\sigma$). The terminology relates to a model structure on the
category of $\fan^\op$-diagrams in $\mathrm{Ch}_A$ with objectwise
weak equivalences and cofibrations.

The passage from $C$ to~$PC$ is functorial in~$C$ and maps objectwise
weak equivalences to objectwise weak equivalences.

\begin{definition}
  \label{def:homotopy-limit}
  Let $\fan$ denote a finite fan as before, and let $C$ denote a
  diagram of chain complexes
  \[C \colon \fan^\op \rTo \mathrm{Ch}_A, \quad \sigma \mapsto
  C^\sigma \ .\]
  The {\it homotopy limit\/} $\holim\,(C) = \holim_{\fan^\op} (C)$
  of~$C$ is defined as
  \[\holim (C) := \lim PC \ .\]
  The homology modules of~$\holim (C)$ are called the {\it
    hyper-derived inverse limits\/} of the diagram~$C$.
\end{definition}

\begin{remark}
  \label{remark:holim}
  \begin{enumerate}
  \item[{\rm (1)}] If $\fan$ has a unique (inclusion-)maximal cone~$\mu$, then
\[\holim (C) = \lim PC \iso (PC)^\mu \ ,\]
    so $C^\mu \simeq \holim (C)$ induced by the quasi-isomorphism $C^\mu
    \rTo^\simeq (PC)^\mu$.
  \item[{\rm (2)}] If $D$ is a $\fan$-indexed diagram of $A$-modules, viewed as a
    diagram of chain complexes concentrated in degree~$0$, then the
    homotopy limit computes higher derived inverse limit:
    \[h_{-k} \holim (D) \iso \lim{}\!^k (D) \ .\]
    Of course $\lim{}\!^k (D)$ will be trivial in this case unless $0
    \leq k \leq n$.
  \end{enumerate}
\end{remark}

The homotopy limit construction is invariant under weak equivalences
of diagrams. That is, if $f \colon C \rTo D$ is an objectwise
quasi-isomorphism then the induced map $\holim (C) \rTo \holim (D)$ is
a quasi-isomorphism.

\begin{lemma}
  \label{lem:holim_constant}
  Let $C$ be a chain complex of $A$-modules, and let
  $\mathrm{con}(C)$ denote the constant $\fan^\op$-diagram with
  value~$C$. Then $C \simeq \holim (\mathrm{con}(C))$.
\end{lemma}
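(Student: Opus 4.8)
The plan is to prove the assertion by induction on the number of cones, using in an essential way that $\{0\}$ is initial in $\fan$. Write $D = \mathrm{con}(C)$. Since $\fan^\op$ is connected (it has the terminal object $\{0\}$), the limit of the constant diagram is $\lim D = C$, and the objectwise weak equivalence $D \rTo PD$ induces a canonical comparison map $\eta \colon C = \lim D \rTo \lim PD = \holim(D)$. I will show that $\eta$ is a quasi-isomorphism. It is convenient to prove the corresponding statement for every finite poset $P$ possessing an initial element and the associated constant $P^\op$-diagram; this added generality is what makes the induction close up, because the index posets that arise in the inductive step are not fans but are finite posets with an initial element.

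For the inductive step pick a maximal element $m$ of $P$ (distinct from the initial element as soon as $|P|>1$) and put $P' = P \setminus \{m\}$, a down-closed subposet that still has an initial element. The structural point I would isolate first is that the canonical fibrant replacement of Definition~\ref{canonical-f-replacement} is computed locally: for any down-closed $Q \subseteq P$ one has $(PD)|_Q = P(D|_Q)$, since the value $(PD)^\tau$ only refers to faces of~$\tau$. Applying this to $Q = P'$ and to the poset $Q = \{\tau \mid \tau \subset m\}$ of proper faces of~$m$, the induction hypothesis yields that the comparison maps $C \rTo \lim_{P'^\op} PD$ and $C \rTo \lim_{\tau \subset m} (PD)^\tau$ are both quasi-isomorphisms.

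Next I would decompose the limit. Writing $L = \lim_{\tau \subset m} (PD)^\tau$, an inspection of compatible families gives the pullback description
\[\lim_{P^\op} PD \iso (\lim_{P'^\op} PD) \times_L (PD)^m \ .\]
By construction $(PD)^m = P(f_m) \rTo^p L$ is a fibration, and the attaching map $f_m \colon C \rTo L$ produced by Definition~\ref{canonical-f-replacement} is exactly the comparison map for the proper faces of $m$, hence a quasi-isomorphism by the previous paragraph. Since the inclusion $i \colon C \rTo P(f_m)$ is a chain homotopy equivalence and $f_m = p \circ i$, two-out-of-three makes $p$ an acyclic fibration. Acyclic fibrations are stable under pullback, so the projection $\lim_{P^\op} PD \rTo \lim_{P'^\op} PD$ is an acyclic fibration, in particular a quasi-isomorphism. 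As this projection is compatible with the comparison maps out of $C$, a final application of two-out-of-three shows that $\eta$ is a quasi-isomorphism, completing the induction.

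I expect the only real obstacle to be the bookkeeping in the second step, namely checking carefully that the canonical fibrant replacement restricts correctly to down-closed subposets (so that the induction hypothesis genuinely applies to the poset of proper faces of $m$) and that the map $f_m$ is literally the comparison map for that subposet. Once these identifications are in place, the homotopy-theoretic input---stability of acyclic fibrations under pullback together with two-out-of-three---is routine. Conceptually the result records the contractibility of the nerve of $\fan$, which has the initial object~$\{0\}$; the induction above is simply a hands-on incarnation of that fact tailored to the path-space model of $\holim$.
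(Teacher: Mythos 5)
Your argument is correct, but it takes a genuinely different route from the paper's. The paper simply observes that $\mathrm{con}(C)$ is already fibrant in the diagram model structure: since $\{0\}$ is a face of every cone, each matching object $\lim_{\tau \subset \sigma} \mathrm{con}(C)^\tau$ equals $C$ with identity matching map, so the weak equivalence $\mathrm{con}(C) \to P\mathrm{con}(C)$ is a map of fibrant diagrams and \textsc{Brown}'s Lemma applied to the right \textsc{Quillen} functor $\lim$ finishes the proof in one line. You instead run an induction on the size of the indexing poset, peeling off a maximal element $m$, exhibiting $\lim_{P^\op} PD$ as the pullback of the fibration $(PD)^m \to L$ along $\lim_{P'^\op} PD \to L$, and concluding via pullback-stability of acyclic fibrations and two-out-of-three. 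This is in effect a hands-on re-derivation, in this special case, of the statement that $\lim$ preserves weak equivalences between fibrant diagrams; what it buys is independence from the model-structure formalism and a slightly more general statement (any finite poset with an initial element), at the cost of the bookkeeping you rightly flag --- the locality $(PD)|_Q = P(D|_Q)$ for down-closed $Q$ and the identification of the attaching map $f_m$ with the comparison map for the face poset of $m$ --- both of which do hold. Both proofs ultimately rest on the same fact you name at the end: every matching poset contains the initial cone $\{0\}$, so the constant diagram needs no fibrant correction.
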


\begin{proof}
  Since $\fan^\op$ has terminal object $\{0\}$, it is easy to see that
  for $\sigma \neq \{0\}$ the map
  \[C = \mathrm{con}(C)^\sigma \rTo \lim_{\tau \subset \sigma}
  \mathrm{con}(C)^\tau = C\]
  is the identity. This means that $\mathrm{con}(C)$ is fibrant in the
  model structure mentioned above. Hence the canonical map
  $\mathrm{con}(C) \rTo P\mathrm{con}(C)$ is a weak equivalence of
  fibrant diagrams. Consequently, the right \textsc{Quillen} functor
  ``inverse limit'' yields a quasi-isomorphism
  \[C = \lim \mathrm{con}(C) \rTo^\sim \lim P \mathrm{con}(C) = \holim
  (\mathrm{con}(C))\]
  by application of \textsc{Brown}'s Lemma \cite[dual of
  Lemma~9.9]{Dwyer-Spalinski}.
\end{proof}

\section{Presheaves and line bundles on toric schemes}

\subsection{Toric schemes}
\label{subsec:toric-varieties}

Let $N \iso \bZ^n$ denote a lattice of rank~$n$. Write $N_\bR = N
\tensor \bR \iso \bR^n$. There is an obvious inclusion $N \subseteq
N_\bR$ given by identifying $p \in N$ with $p \otimes 1 \in N_\bR$.
We denote the dual lattice of~$N$ by the letter~$M$, and write $M_\bR
= M \tensor \bR$. Clearly $M \subseteq M_\bR$, and $M_\bR$ is the dual
vector space of~$N_\bR$.

Let $\fan$ be a finite fan in~$N_\bR$,
cf.~\S\ref{sec:homoty-limit}. In addition to the conditions listed
there, we require each cone in~$\fan$ to be {\it rational}, \ie,
spanned by finitely many vectors in~$N \subset N_\bR$ We write
$\fan(1)$ for the set of $1$-cones in~$\fan$.  Similarly, if $\sigma
\in \fan$ is any cone we write $\sigma(1)$ for the set of $1$-cones
of~$\fan$ contained in~$\sigma$. Every $1$\nobreakdash-cone~$\rho$ is
spanned by a unique primitive element $n_\rho \in N$; the set
$\{n_\rho \,|\, \rho \in \sigma(1)\}$ is called the set of primitive
generators of $\sigma \in \fan$.

A cone $\sigma \in \fan$ then gives rise to a pointed monoid
\begin{equation}
  \label{eq:S_sigma}
  S_\sigma = \{ f \in M \,|\, \forall \rho \in \sigma(1): f(n_\rho)
  \geq 0 \}_+
\end{equation}
where the subscript ``+'' means adding a new element $*$ which acts
like $a + * = * + a = *$ for all $a \in S_\sigma$; this convention
will be useful when describing restriction functors
in~\S\ref{sec:restriction}.  The cone~$\sigma$ thus determines an
$A$-algebra
\[A^\sigma = \tilde A[S_\sigma]\]
where $A$ is any ring with unit (possibly non-commutative), and
$\tilde A[S_\sigma]$ is the reduced monoid algebra $A[S_\sigma]/A[*]$
of~$S_\sigma$.

\medbreak

In case $A$~is a commutative ring, we set $U_\sigma =
\mathrm{Spec}(A^\sigma)$, and define the $A$-scheme $X_\fan$ as the
union $\bigcup_{\sigma \in \fan} U_\sigma$. By construction, $U_\sigma
\cap U_\tau = U_{\sigma \cap \tau}$ for all cones $\sigma, \tau \in
\fan$. The scheme $X_\fan$ is called {\it the toric scheme associated
  to~$\fan$}. If $A$ is an algebraically closed field, $X_\fan$ is an
algebraic variety over~$A$. See \textsc{Fulton} \cite{Fulton-toric}
for a full treatment of toric varieties, and more details of the
construction.

\subsection{Presheaves on toric schemes}

As before let $\fan$ denote a finite fan of rational pointed cones,
and let $A$ denote a (possibly non-commutative) ring with unit. For
commutative $A$ this data defines an $A$-scheme $X_\fan$ as indicated
in \S\ref{subsec:toric-varieties}.  But even if $A$ is non-commutative
we will speak of presheaves on~$X_\fan$:

\begin{definition}
  \label{def:preheaves}
  The category $\pre(\fan)$ of {\it presheaves on the toric scheme $X_\fan$
    defined over~$A$\/} has objects the diagrams
  \[C \colon \fan^\op \rTo \mathrm{Ch}_A, \quad \sigma \mapsto C^\sigma\]
  together with additional data that equip each entry $C^\sigma$ with the
  structure of an object of $\mathrm{Ch}_{A^\sigma}$, and such that for each
  inclusion $\tau \subseteq \sigma$ in $\fan$ the structure map $C^\sigma \rTo
  C^\tau$ is $A^\sigma$-linear.
\end{definition}

A particularly useful example of a presheaf is the functor
\[\mathcal{O} = \mathcal{O} (\vec 0) \colon \fan^\mathrm{op} \rTo
\mathrm{Ch}_A, \quad \sigma \mapsto A^\sigma\]
(see~\S\ref{sec:comb-proj}) where we consider the algebra $A^\sigma$ as an
$A^\sigma$-module chain complex concentrated in degree~$0$.

\subsection{Model structures}

The category $\pre(\fan)$ defined above is an example of a twisted
diagram category in the sense of~\cite[\S2.2]{HR-Twisted}, formed with
respect to an adjunction bundle similar to the one described in
Example~2.5.4 of {\it loc.cit.\/} (one needs to replace ``modules''
with ``chain complexes of modules''). We thus know that the category
$\pre(\fan)$ has two \textsc{Quillen\/} model structures, called the
$f$-structure and the $c$-structure, respectively. In both cases the
weak equivalences are the objectwise quasi-isomorphisms. Fibrations
and cofibrations are different, as explained below.

\begin{numpar}
  \label{sec:f-structure}
  {\bf The $f$-structure} \cite[Theorem~3.3.5]{HR-Twisted}. In this
  model structure, a map $f \colon C \rTo D$ in $\pre(\fan)$ is a
  cofibration if and only if all its components $f^\sigma$, $\sigma
  \in \fan$, are cofibrations in their respective categories.

  Fibrations can be characterised using {\it matching complexes}. For
  $\sigma \in \fan$ define $M^\sigma (C):= \lim_{\tau \subset \sigma}
  C^\tau$, the limit taken in the category $\mathrm{Ch}_{A^\sigma}$
  over all $\tau \in \fan$ properly contained in~$\sigma$. Then $f
  \colon C \rTo D$ is a fibration if and only if for all $\sigma \in
  \fan$ the induced map $\iota \colon C^\sigma \rTo M^\sigma (C)
  \times_{M^\sigma (D)} D^\sigma$ is a fibration
  in~$\mathrm{Ch}_{A^\sigma}$ (\ie, if $\iota$ is levelwise
  surjective).
\end{numpar}

\begin{lemma}
  Let $C$ be an object of $\pre(\fan)$. The canonical fibrant
  replacement $PC$ of~$C$ as defined in~\ref{canonical-f-replacement}
  yields an $f$-fibrant object of~$\pre(\fan)$.
\end{lemma}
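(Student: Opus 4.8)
The plan is to unwind the $f$-fibration criterion recorded in~\ref{sec:f-structure} for the unique map $PC \rTo T$ to the terminal object~$T$ of $\pre(\fan)$, and then to observe that the surjectivity it demands has already been built into the construction in~\ref{canonical-f-replacement} (indeed it is exactly the assertion of the remark following that definition). First I would identify $T$: in each entry the terminal chain complex is the zero complex, so $T^\sigma = 0$ for all~$\sigma$, and hence $M^\sigma(T) = \lim_{\tau \subset \sigma} 0 = 0$. Since an object is $f$-fibrant precisely when its map to~$T$ is an $f$-fibration, the criterion of~\ref{sec:f-structure} applied to $PC \rTo T$ requires that for every $\sigma \in \fan$ the induced map
\[ (PC)^\sigma \rTo M^\sigma(PC) \times_{M^\sigma(T)} T^\sigma \iso M^\sigma(PC) = \lim_{\tau \subset \sigma} (PC)^\tau \]
be a fibration in $\mathrm{Ch}_{A^\sigma}$, \ie, levelwise surjective; the fibre product collapses because both $M^\sigma(T)$ and $T^\sigma$ are trivial. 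So the whole statement reduces to surjectivity of $(PC)^\sigma \rTo M^\sigma(PC)$ for every cone.

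Next I would match this map to the construction. By the universal property of the limit, the canonical map $(PC)^\sigma \rTo \lim_{\tau \subset \sigma}(PC)^\tau$ is determined by the structure maps $(PC)^\sigma \rTo (PC)^\tau$, and by~\ref{canonical-f-replacement} these are \emph{defined} as the composites of the map $p \colon P(f) = (PC)^\sigma \rTo \lim_{\tau \subset \sigma}(PC)^\tau$ with the projections out of the limit. Consequently the map above is precisely this~$p$, which is $\mathrm{pr}_3$ in the canonical path space factorisation and therefore levelwise surjective by Definition~\ref{def:path_factorisation}. The low-dimensional cases fit the same pattern and cause no trouble: for $\sigma = \{0\}$ the indexing category of the limit is empty, so $M^{\{0\}}(PC) = 0$ and surjectivity of $(PC)^{\{0\}} = C^{\{0\}} \rTo 0$ is automatic, while for a $1$-cone~$\rho$ the only proper face is $\{0\}$, so $M^\rho(PC) \iso (PC)^{\{0\}} = C^{\{0\}}$ and the map in question is again the fibration~$p$ coming from the factorisation of $C^\rho \rTo C^{\{0\}}$.

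Finally I would confirm that $PC$ genuinely lands in $\pre(\fan)$ and not merely in $\fan^\op$-diagrams of $A$-modules, so that the $f$-structure criterion is even applicable: since the structure maps of~$C$ are $A^\sigma$-linear, the limit $\lim_{\tau \subset \sigma}(PC)^\tau$ carries a canonical $A^\sigma$-module structure (the forgetful functor preserves limits, so this agrees with the underlying complex used in~\ref{canonical-f-replacement}), the path space $P(f)$ of an $A^\sigma$-linear map~$f$ inherits one degreewise from $C^\sigma_n \times D_{n+1} \times D_n$, and $p$ is $A^\sigma$-linear; thus each $(PC)^\sigma \in \mathrm{Ch}_{A^\sigma}$ and the new restriction maps are $A^\sigma$-linear. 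I do not expect a serious obstacle anywhere: the content is essentially a translation of the surjectivity already asserted after~\ref{canonical-f-replacement} into the matching-complex language of the $f$-structure. The only points needing a little care are the identification of the matching-complex map with the path-space fibration~$p$ and the bookkeeping of the $A^\sigma$-module structures in the empty and one-dimensional cases.
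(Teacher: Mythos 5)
Your proposal is correct and follows essentially the same route as the paper: check that $PC$ genuinely lives in $\pre(\fan)$ (via the $A^\sigma$-module structures obtained by restriction of scalars along $A^\sigma \subseteq A^\tau$, with the forgetful functor preserving limits so that surjectivity can be tested over~$A$), and then observe that the matching-complex map $(PC)^\sigma \rTo M^\sigma(PC)$ is exactly the path-space fibration~$p$ built into Definition~\ref{canonical-f-replacement}. The paper compresses this into a remark that it is ``a matter of tracing the definitions''; your write-up just makes those traces explicit.
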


\begin{proof}
  The important thing to note is that for each inclusion of cones
  $\tau \subseteq \sigma$ there is an inclusion of algebras $A^\sigma
  \subseteq A^\tau$, so $C^\tau$ can be considered as an
  $A^\sigma$-module chain complex by restriction of scalars. It is
  then a matter of tracing the definitions to see that $PC \in
  \pre(\fan)$. Since fibrations are surjections in all relevant
  categories of chain complexes, and since surjectivity can be
  detected after restricting scalars to the ground ring~$A$, the Lemma
  follows.
\end{proof}

\begin{numpar}
  \label{sec:c-structure}
  {\bf The $c$-structure} \cite[Theorem~3.2.13]{HR-Twisted}. In this
  model structure, a map $f \colon C \rTo D$ in $\pre(\fan)$ is a
  fibration if and only if all its components $f^\sigma$, $\sigma \in
  \fan$, are fibrations in their respective categories (\ie, the
  components are surjective in all chain levels). Note that all
  objects of $\pre(\fan)$ are $c$-fibrant.
  
  Cofibrations can be characterised using {\it latching complexes}.
  For $\sigma \in \fan$ define $L_\sigma (C):= \mathrm{colim}_{\tau
    \supset \sigma}\, A^\sigma \tensor_{A^\tau} C^\tau$, the colimit
  being taken over all $\tau \in \fan$ properly containing~$\sigma$.
  Then $f \colon C \rTo D$ is a cofibration if and only if for all
  $\sigma \in \fan$ the map
  \[L_\sigma (D) \cup_{L_\sigma (C)} C^\sigma \rTo D^\sigma\]
  is a cofibration in $\mathrm{Ch}_{A^\sigma}$. In particular, $D$ is
  cofibrant if and only if for all $\sigma \in \fan$ the map $L_\sigma
  (D) \rTo D^\sigma$ is a cofibration.

  For $\tau \in \fan$ and $P \in \mathrm{Ch}_A$ we define the diagram
  \[F_\tau (P) \colon \sigma \mapsto
  \begin{cases}
    0 & \hbox{if\ }  \sigma \not\subseteq \tau \\
    A^\sigma \tensor_A P & \hbox{if\ } \sigma \subseteq \tau
  \end{cases}
  \]
  together with the evident structure maps induced by the various
  inclusions of $A$-algebras $A^\sigma \rTo A^{\sigma^\prime}$.
\end{numpar}

\begin{lemma}
  \label{lem:c_structure_is_cellular}
  The $c$-structure is a cellular model structure in the sense of
  \cite[\S12.1]{Hirsch-Loc}. A set of generating cofibrations is given
  by
  \[I_c := \left\{ F_\tau (i) \,|\, i \in I,\ \tau \in \fan \right\}\]
  where $I$ is as in Lemma~\ref{lem:ch_A_is_cellular}. Similarly, a
  set of generating acyclic cofibrations is
  \[J_c := \left\{ F_\tau (j) \,|\, j \in J,\ \tau \in \fan \right\}\]
  with $J$ as in Lemma~\ref{lem:ch_A_is_cellular}.
\end{lemma}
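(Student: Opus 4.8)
The statement packages two claims: that the $c$-structure is cofibrantly generated by $I_c$ and $J_c$, and that it is moreover \emph{cellular} in the sense of \cite[\S12.1]{Hirsch-Loc}. The first claim is part of the general twisted-diagram machinery: the $c$-structure of~\ref{sec:c-structure} is produced from the cofibrantly generated structure on $\mathrm{Ch}_A$ (Lemma~\ref{lem:ch_A_is_cellular}) by the free-diagram construction, and that construction delivers precisely the sets $I_c=\{F_\tau(i)\}$ and $J_c=\{F_\tau(j)\}$ as generating cofibrations and generating acyclic cofibrations \cite[Theorem~3.2.13]{HR-Twisted}. I would therefore take this identification as given and concentrate on the three cellularity axioms: (a) the domains and codomains of $I_c$ are compact; (b) the domains of $J_c$ are small relative to $I_c$; (c) every $c$-cofibration is an effective monomorphism.

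The device that drives all three verifications is the adjunction defining the functors~$F_\tau$ together with the way (co)limits are formed in $\pre(\fan)$. First I would record that, for each $\tau\in\fan$, the functor $F_\tau\colon \mathrm{Ch}_A \rTo \pre(\fan)$ is left adjoint to evaluation $\mathrm{ev}_\tau\colon \pre(\fan)\rTo\mathrm{Ch}_A$, $C\mapsto C^\tau$, where $C^\tau$ is viewed as an $A$-complex by restriction of scalars along $A\rTo A^\tau$: a map out of $F_\tau(P)$ is freely determined by its component at the maximal supporting cone~$\tau$, since for $\sigma\subseteq\tau$ one has $F_\tau(P)^\sigma = A^\sigma\tensor_A P = A^\sigma\tensor_{A^\tau}(A^\tau\tensor_A P)$ with the base-change maps as structure maps, whence $\hom_{\pre(\fan)}(F_\tau(P),\,C)\iso\hom_{\mathrm{Ch}_A}(P,\,C^\tau)$. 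Second, I would observe that both limits and colimits in $\pre(\fan)$ are computed objectwise in the fibres $\mathrm{Ch}_{A^\sigma}$: for colimits this uses that the transition functors $A^\sigma\tensor_{A^{\sigma'}}(-)$ are left adjoints and hence commute with colimits, and dually that restriction of scalars preserves limits. In particular each $\mathrm{ev}_\tau$ preserves all colimits. The domains and codomains of $I_c$ are exactly $F_\tau(S_{n-1}(A))$ and $F_\tau(D_n(A))$, while every element of $J_c$ has the zero object as its domain.

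Axiom~(b) is then immediate, the zero object being small relative to any set of maps. For axiom~(a) I would transfer compactness across the adjunction: via the natural isomorphism $\hom_{\pre(\fan)}(F_\tau(P),\,-)\iso\hom_{\mathrm{Ch}_A}(P,\,\mathrm{ev}_\tau(-))$ and the fact that $\mathrm{ev}_\tau$ preserves colimits, a map out of $F_\tau(P)$ into a relative $I_c$-cell complex corresponds to a map out of $P$ into its $\mathrm{ev}_\tau$-image in $\mathrm{Ch}_A$; since $S_{n-1}(A)$ and $D_n(A)$ are compact relative to $I$ in $\mathrm{Ch}_A$ by Lemma~\ref{lem:ch_A_is_cellular}, the same cardinal bounds control $F_\tau(S_{n-1}(A))$ and $F_\tau(D_n(A))$ relative to $I_c$. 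The point that needs genuine care — and which I expect to be the main technical obstacle — is the bookkeeping behind compactness: one must check that $\mathrm{ev}_\tau$ sends a presentation of an $I_c$-cell complex, and each of its subcomplexes, to a presentation of an $I$-cell complex in $\mathrm{Ch}_A$, keeping track of the base changes $\mathrm{ev}_\tau(F_{\tau'}(i)) = A^\tau\tensor_A i$ (for $\tau\subseteq\tau'$, and $0$ otherwise), so that Hirschhorn's size estimates survive the twisting. This is the step that must be carried out in detail rather than quoted.

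For axiom~(c) I would use that, everything being objectwise, a map $f$ is an effective monomorphism in $\pre(\fan)$ if and only if each $f^\sigma$ is one in $\mathrm{Ch}_{A^\sigma}$; and in the abelian category $\mathrm{Ch}_{A^\sigma}$ effective monomorphisms are just monomorphisms. It therefore suffices to see that every $c$-cofibration is objectwise injective. The generating cofibrations are, by the computation above, objectwise the split injections $A^\sigma\tensor_A S_{n-1}(A)\rTo A^\sigma\tensor_A D_n(A)$; monomorphisms of module chain complexes are stable under pushout, transfinite composition and retract, so every retract of a relative $I_c$-cell complex is objectwise monic. Assembling (a), (b) and (c) then yields the cellularity of the $c$-structure.
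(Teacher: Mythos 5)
Your proposal is correct, and it takes the only reasonable route: the paper itself gives no argument here beyond ``this follows by direct inspection from Lemma~\ref{lem:ch_A_is_cellular}; we omit the details'', so your write-up is precisely the verification being alluded to. The key observations --- that $F_\tau$ is left adjoint to evaluation at~$\tau$ (so $\hom_{\pre(\fan)}(F_\tau(P),C)\iso\hom_{\mathrm{Ch}_A}(P,C^\tau)$), that (co)limits in $\pre(\fan)$ are objectwise, and that the three cellularity axioms then reduce to the corresponding facts for $I$ and $J$ in $\mathrm{Ch}_A$ together with objectwise injectivity of $c$-cofibrations --- are all sound and consistent with the paper's setup.
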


\begin{proof}
  This follows by direct inspection from
  Lemma~\ref{lem:ch_A_is_cellular}. We omit the details.
\end{proof}

\begin{lemma}
  \label{lem:cosimplicial_resolution_pre_fan}
  Suppose $C \in \pre(\fan)$ is a $c$-cofibrant object
  (\ref{sec:c-structure}). Then
  \[A[\Delta^\bullet] \tensor C \colon \ \fan^\mathrm{op} \rTo
  \mathrm{Ch}_A, \quad \sigma \mapsto A[\Delta^\bullet] \tensor_A
  C^\sigma \]
  is a cosimplicial resolution of~$C$.
\end{lemma}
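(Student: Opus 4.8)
The plan is to mirror the proof of Lemma~\ref{lem:cosimiplicial_resolution_ch_a}, transporting it from $\mathrm{Ch}_A$ to the twisted diagram category $\pre(\fan)$ equipped with the $c$-structure~(\ref{sec:c-structure}). Recall that a cosimplicial resolution of~$C$ is a Reedy-cofibrant cosimplicial object together with a weak equivalence to the constant object $\mathrm{cc}^* C$ \cite[\S16.1]{Hirsch-Loc}; the relevant Reedy structure is the one induced on cosimplicial objects in $\pre(\fan)$ by the $c$-structure \cite[\S15.3]{Hirsch-Loc}. The augmentation $A[\Delta^\bullet]\tensor C \rTo \mathrm{cc}^* C$ is the map induced levelwise, for each $\sigma \in \fan$, by the unique map $\Delta^n \rTo \Delta^0$ and the isomorphism $N(A[\Delta^0])\tensor_A C^\sigma \iso C^\sigma$, exactly as in Lemma~\ref{lem:cosimiplicial_resolution_ch_a}. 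That this is a weak equivalence is the easy half: in each cosimplicial degree~$n$ and for each~$\sigma$, the map $N(A[\Delta^n])\tensor_A C^\sigma \rTo C^\sigma$ is a chain homotopy equivalence, since $N(A[\Delta^n])\rTo N(A[\Delta^0])=A$ is one and tensoring preserves chain homotopies.

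The substance is Reedy cofibrancy, \ie\ that for every~$n$ the $n$-th latching map $L_n(A[\Delta^\bullet]\tensor C) \rTo N(A[\Delta^n])\tensor_A C$ is a $c$-cofibration in $\pre(\fan)$. Since the functor $(\,\cdot\,)\tensor_A C$ is computed objectwise and commutes with colimits, the Reedy latching object is computed objectwise by the same colimit as in the proof of Lemma~\ref{lem:cosimiplicial_resolution_ch_a}, giving $L_n(A[\Delta^\bullet]\tensor C)^\sigma = N(A[\partial\Delta^n])\tensor_A C^\sigma$; thus the latching map is $\iota_n \tensor_A C$, where $\iota_n \colon N(A[\partial\Delta^n]) \rTo N(A[\Delta^n])$ is the cofibration of $A$-module chain complexes from the previous lemma. (The case $n=0$ reads $0 \rTo C$, which is a $c$-cofibration precisely because $C$ is assumed $c$-cofibrant; this is where the hypothesis enters.) It therefore suffices to prove the following general fact: if $P \rTo Q$ is a cofibration in $\mathrm{Ch}_A$ and $C$ is $c$-cofibrant, then $P\tensor_A C \rTo Q\tensor_A C$ is a $c$-cofibration in $\pre(\fan)$.

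I would prove this last statement by the latching-object characterisation of $c$-cofibrations recalled in~(\ref{sec:c-structure}). The key computation is that, for a complex $P$ of free $A$-bimodules, the functor $(\,\cdot\,)\tensor_A C$ commutes both with the base-change tensors $A^\sigma \tensor_{A^\tau}(\,\cdot\,)$ and with the colimit defining $L_\sigma$, yielding a natural isomorphism $L_\sigma(P\tensor_A C) \iso P\tensor_A L_\sigma(C)$. Feeding this into the relative latching map of $(P\rTo Q)\tensor_A C$ at a cone~$\sigma$ identifies it with the pushout-product, inside $\mathrm{Ch}_{A^\sigma}$, of $P \rTo Q$ with the map $L_\sigma(C) \rTo C^\sigma$. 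Now $L_\sigma(C)\rTo C^\sigma$ is a cofibration because $C$ is $c$-cofibrant, and $P \rTo Q$ is a cofibration of complexes of free $A$-modules; the pushout-product axiom for the $\mathrm{Ch}_\bZ$-module structure on $\mathrm{Ch}_{A^\sigma}$ then shows this relative latching map to be a cofibration in $\mathrm{Ch}_{A^\sigma}$, as required. Equivalently, one may verify directly that $(\,\cdot\,)\tensor_A C$ sends the generating cofibrations $I$ of $\mathrm{Ch}_A$ (Lemma~\ref{lem:ch_A_is_cellular}) to $c$-cofibrations and commutes with the pushouts and transfinite compositions exhibiting $\iota_n$ as a relative $I$-cell complex.

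The main obstacle I anticipate is precisely this last general fact, namely the compatibility of the tensor $(\,\cdot\,)\tensor_A C$ with the $c$-structure. The bookkeeping is delicate because $A$ is allowed to be non-commutative, so the interchange $A^\sigma \tensor_{A^\tau}(P\tensor_A C^\tau) \iso P\tensor_A(A^\sigma\tensor_{A^\tau}C^\tau)$ must be justified using that $P$ is a complex of copies of the regular $A$-bimodule (as it is for $P=N(A[\partial\Delta^n])$ and $P=N(A[\Delta^n])$), rather than by appealing to commutativity. Once the isomorphism $L_\sigma(P\tensor_A C)\iso P\tensor_A L_\sigma(C)$ is in hand, the remainder is the standard pushout-product argument together with the reduction to the generating cofibrations supplied by Lemma~\ref{lem:c_structure_is_cellular}.
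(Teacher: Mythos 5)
Your argument is correct and follows the same route as the paper, whose own proof is only a two-sentence sketch deferring to Lemma~\ref{lem:cosimiplicial_resolution_ch_a} via the Reedy cofibrancy of $A[\Delta^\bullet]$ and the fact that tensoring commutes with colimits. You have simply carried out that sketch in detail, correctly identifying the relative latching maps as pushout-products with $L_\sigma(C)\rTo C^\sigma$ and flagging the one genuinely delicate point (the bimodule bookkeeping needed for the interchange isomorphism when $A$ is non-commutative).
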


\begin{proof}
  This follows from the fact that $A[\Delta^\bullet]$ is
  \textsc{Reedy} cofibrant cosimplicial simplicial module, and the
  fact the taking tensor products commutes with colimits. The details
  are similar to Lemma~\ref{lem:cosimiplicial_resolution_ch_a}.
\end{proof}

\subsection{Restriction and extension by zero}
\label{sec:restriction}

We will use the notation of \S\ref{subsec:toric-varieties}.  Let
$\fan$ denote a finite fan in~$N_\bR$. Given a cone $\rho \in \fan$ we
define the {\it star of~$\rho$\/} as
\[\st(\rho) = \{ \sigma \in \fan \,|\, \rho \subseteq \sigma \} \ .\]

\begin{numpar}
  \label{par:quotient_fan}
  A $1$-cone $\rho \in \fan(1)$ determines a fan $\fan/\rho$ in an
  $(n-1)$-dimensional vector space as follows. Let $\bZ\rho$ denote
  the sub-lattice of~$N$ generated by the span of~$\rho$. Then $\bar N
  = N/\bZ\rho$ is a lattice of rank~$n-1$.  Given any cone $\sigma \in
  \st(\rho)$ the image $\bar \sigma$ of~$\sigma$ under the projection
  $N_\bR \rTo \bar N_\bR$ is a pointed rational polyhedral cone, and
  by varying~$\sigma \in \st(\rho)$ we obtain a fan $\fan/\rho$ of a
  toric scheme denoted $X_{\fan/\rho} = V_\rho$.  Note that this new
  fan is isomorphic, as a graded poset, to $\st(\rho)$.---If $A = \bC$
  then $V_\rho$ is the closure of the orbit in~$X_\fan$ corresponding
  to~$\rho$, and its is known that $V_\rho$ has codimension~$1$ in
  $X_\fan$.
\end{numpar}

From now on we will assume
that the fan is {\it regular}, that is, each cone of $\fan$ is spanned
by part of a $\bZ$-basis (which depends on the cone under
consideration) of the lattice~$N$.  This condition is equivalent to
the requirement that the toric variety $X_\fan$ defined over~$\bC$ is
smooth.

\medbreak

Given $\rho \in \fan(1)$ and $\sigma \in \st(\rho)$ let $n_1, \ldots,
n_k$ denote the primitive elements of the $1$-cones contained
in~$\sigma$.  Suppose that $n_k \in \rho$ (which can be achieved by
renumbering).  Let $\bar\sigma$ denote the image of~$\sigma$ in $\bar
N_\bR = (N/\bZ\rho)_\bR$ as before, and denote the images of the~$n_j$
in~$\bar N$ by $\bar n_j$. Since $\sigma$ is regular, the $\bar n_1,
\ldots, \bar n_{k-1}$ form part of a basis of the lattice~$\bar N$,
and are precisely the primitive elements of the $1$-cones contained
in~$\bar\sigma$. Since the lattice dual of~$\bar N$ is $M \cap
\rho^\perp$, we see that
\[S_{\bar\sigma} \iso \{ f \in M \,|\, f(n_j) \geq 0 \hbox{ for } 1 \leq
j \leq k-1, \hbox{ and } f|_\rho =0 \}_+ \]
(compare to the description~(\ref{eq:S_sigma}) of the monoid~$S_\sigma$).
Of course $f|_\rho = 0$ is equivalent to $f(n_k) = 0$.---We obtain a
surjective map of pointed monoids
\begin{equation}
  \label{eq:surj_Ssigma_Sbarsigma}
  S_\sigma \rTo S_{\bar\sigma}, \quad f \mapsto
  \begin{cases}
    f & \hbox{if\ } f|_\rho =0 \\ * & \hbox{else}
  \end{cases}
\end{equation}
and, by linearisation, a corresponding surjective map of $A$-algebras
\begin{equation}
  \label{eq:surj_Asigma_Abarsigma}
  A^\sigma \rTo A^{\bar\sigma} \ .
\end{equation}
For commutative~$A$ this map exhibits $\mathrm{Spec}(A^{\bar \sigma})
= V_\rho \cap U_\sigma$ as a closed subset of $U_\sigma \subseteq
X_\fan$.

\begin{numpar}
  Recall that the fan $\fan/\rho$ of~$V_\rho$ is isomorphic, as a poset, to
  $\st(\rho) \subseteq \fan$. Thus an object $C \in \pre(\fan/\rho)$ can be
  considered as a functor defined on the poset $\st(\rho)^\op$, and we define
  a diagram $\zeta(C)$ on~$\fan^\op$ by setting
  \[\zeta(C)^\sigma :=
  \begin{cases}
    0 & \hbox{if\ } \rho \not\subseteq \sigma \\
    C^\sigma & \hbox{if\ } \rho \subseteq \sigma
  \end{cases}
  \] 
  with structure maps induced by those of~$C$. For $\sigma \in \st(\rho)$ we
  let $A^\sigma$ act on $\zeta(C)^\sigma$ via the surjection $A^\sigma \rTo
  A^{\bar\sigma}$. In this way, $\zeta (C)$ becomes an object of $\pre(\fan)$,
  called the {\it extension by zero\/} of~$C$. By direct computation we
  verify:
\end{numpar}

\begin{lemma}
  \label{lem:holim_of_extension}
  For $C \in \pre(\fan/\tau)$ there is an equality
  \[\holim_{\fan^\mathrm{op}}\, C = \holim_{\st(\rho)^\mathrm{op}}\,
  \zeta(C)\]
  where we consider the presheaves on left and right hand side as
  diagrams with values in the category of $A$-modules to form the
  homotopy limits (\ref{def:homotopy-limit}). \qed
\end{lemma}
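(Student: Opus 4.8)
The plan is to compute the canonical fibrant replacement (Definition~\ref{canonical-f-replacement}) of $\zeta(C)$ entry by entry and to recognise it as the canonical fibrant replacement of $C$ itself, the latter regarded as a diagram on $\st(\rho)^\op \iso (\fan/\rho)^\op$ via the isomorphism of~\ref{par:quotient_fan}. Explicitly, I would prove by induction on $\dim\sigma$ that
\[(P\zeta(C))^\sigma =
\begin{cases}
  0 & \hbox{if\ } \rho \not\subseteq \sigma, \\
  (PC)^\sigma & \hbox{if\ } \rho \subseteq \sigma,
\end{cases}\]
where on the right $PC$ is the fibrant replacement built in the category of $\st(\rho)^\op$-diagrams. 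Once this is established, passing to limits---over $\fan^\op$ on the left, over $\st(\rho)^\op$ on the right---will deliver the asserted equality of homotopy limits.

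Two elementary facts drive the induction. First, $\st(\rho)$ is upward closed in~$\fan$, so its complement is a down-set: every proper face of a cone outside $\st(\rho)$ again lies outside $\st(\rho)$, while for $\sigma \in \st(\rho)$ the faces belonging to $\st(\rho)$ are precisely those $\tau$ with $\rho \subseteq \tau \subseteq \sigma$. Second, the path space factorisation (Definition~\ref{def:path_factorisation}) of any map $f \colon C \rTo 0$ satisfies $P(f) = C$, because the degree-$n$ part $C_n \times 0 \times 0$ collapses to $C_n$ with differential~$\partial$. Thus for $\rho \not\subseteq \sigma$ the matching object $\lim_{\tau \subset \sigma}(P\zeta(C))^\tau$ is a limit of zero objects, whence $(P\zeta(C))^\sigma = P(0 \rTo 0) = 0$; for $\sigma \in \st(\rho)$ only the faces in $\st(\rho)$ contribute nonzero entries to the matching object.

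The technical heart is the assertion that these vanishing entries may be discarded without affecting the limits. This rests on a general observation: if a diagram $F \colon \mathcal{I} \rTo \mathrm{Ch}_A$ vanishes outside a sub-poset $\mathcal{J}$, and $\mathcal{I}$ admits no morphism whose source lies in $\mathcal{I} \setminus \mathcal{J}$ and whose target lies in $\mathcal{J}$, then every compatible family over $\mathcal{J}$ extends uniquely by zero to one over $\mathcal{I}$, so that $\lim_\mathcal{I} F = \lim_\mathcal{J} F|_\mathcal{J}$. In $\fan^\op$ a morphism $\sigma \rTo \tau$ corresponds to $\tau \subseteq \sigma$; were the target $\tau$ in $\st(\rho)$ and the source $\sigma$ outside it, then $\rho \subseteq \tau \subseteq \sigma$ would force $\sigma \in \st(\rho)$, a contradiction. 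Hence the hypothesis holds for the face-poset of each $\sigma$ and for $\fan^\op$ globally. Applying it to the matching objects identifies $\lim_{\tau \subset \sigma}(P\zeta(C))^\tau$ with $\lim_{\rho \subseteq \tau \subset \sigma}(PC)^\tau$, and one checks that the resulting factorisation of $C^\sigma = \zeta(C)^\sigma$ coincides verbatim with the one defining $(PC)^\sigma$ in the $\st(\rho)^\op$-replacement; this is precisely what upgrades the conclusion from an isomorphism to an equality.

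Applying the same principle to the global limits then yields
\[\holim_{\fan^\op}\zeta(C) = \lim_{\fan^\op} P\zeta(C) = \lim_{\st(\rho)^\op} PC = \holim_{\st(\rho)^\op} C.\]
I expect the only real difficulty to be bookkeeping rather than conceptual: one must confirm that the matching objects computed inside $\fan$ and inside $\fan/\rho$ agree literally under the graded-poset isomorphism $\st(\rho) \iso \fan/\rho$, so that the two inductive procedures produce the very same chain complex---not merely quasi-isomorphic ones---at each cone of $\st(\rho)$.
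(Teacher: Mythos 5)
Your computation is correct and is precisely the ``direct computation'' that the paper leaves to the reader (the lemma carries a \qedsymbol{} with no written proof): the two observations doing the work --- that $P(f)$ coincides with the source of $f$ when the target is zero, and that a diagram vanishing outside the up-set $\st(\rho)$ has the same limit over $\fan^\op$ as over $\st(\rho)^\op$ because no morphism of $\fan^\op$ points from the complement into $\st(\rho)^\op$ --- are exactly the right ones, and the dimension-shift bookkeeping under $\st(\rho)\iso\fan/\rho$ is harmless since the poset structure, hence the inductive construction of $P$, is preserved. Note that you have also silently corrected the misprints in the statement (it should read $C\in\pre(\fan/\rho)$ and $\holim_{\fan^\op}\zeta(C)=\holim_{\st(\rho)^\op}C$), which is the form actually invoked in the proof of Proposition~\ref{prop:null_implies_acyclic}.
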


\begin{numpar}
  \label{par:restriction}
  The extension functor $\zeta \colon \pre(\fan/\rho) \rTo \pre(\fan)$
  has a left adjoint~$\varepsilon$, called {\it restriction
    to~$V_\rho$}. Its effect on $C \in \pre(\fan)$ is the following:
  As a diagram of $A$-module chain complexes, $\varepsilon (C)$ is
  given by
  \[\varepsilon(C) \colon \st(\rho)^\op \rTo \mathrm{Ch}_A, \quad M
  \mapsto A^{\bar\sigma} \tensor_{A^\sigma} C^\sigma \ ,\]
  the tensor product formed with respect to the surjection $A^\sigma
  \rTo A^{\bar\sigma}$. We also denote $\varepsilon (C)$ by
  $C|_{V_\rho}$.
\end{numpar}

\subsection{Line bundles and twisting}
\label{sec:comb-proj}

As before, let $\fan$ denote a regular fan in~$N_\bR$, and recall that
every $1$-cone $\rho \in \fan$ is generated by a unique primitive
element $n_\rho \in N$.

\begin{construction}
  \label{def:twisting_sheaf}
  Fix a vector $\vec k = (k_\rho)_{\rho \in \fan(1)} \in
  \bZ^{\fan(1)}$.  Since $\fan$ is regular we can find for every cone
  $\sigma \in \fan$ an integral linear form $f_\sigma \colon N_\bR
  \rTo \bR$, unique up to adding a linear form vanishing on~$\sigma$,
  which satisfies $f_\sigma(n_\rho) = -k_\rho$ for every $1$-cone
  $\rho$ contained in~$\sigma$.
  
  If $\tau \in \fan$ is another cone, then $f_\tau$ and $f_\sigma$
  agree on~$\tau \cap \sigma$ (since they agree on $1$-cones of $\tau
  \cap \sigma$), and both $\pm (f_\tau - f_\sigma)$ are elements
  of~$S_{\tau \cap \sigma}$. Consequently we have $f_\tau + S_{\tau
    \cap \sigma} = f_\sigma + S_{\tau \cap \sigma}$; in particular,
  the set $f_\sigma + S_\sigma$ depends on~$\sigma$ and~$\vec k$ only
  (and not the specific choice of function~$f_\sigma$). We thus
  obtain a well-defined functor
  \[\mathcal{O}(\vec k) \colon \fan^\op \rTo A\hbox{-}\mathrm{mod}, \quad \tau
  \mapsto \tilde A[f_\tau + S_\tau] \ ,\] considered as a diagram of
  chain complexes concentrated in degree~$0$. Structure maps are given
  by inclusions. We call $\mathcal{O}(\vec k)$ the {\it line bundle
    determined by~$\vec k$}. Note that $\mathcal{O}(\vec k)$ is, in
  fact, an object of~$\pre(\fan)$ (as usual, we think of modules as
  chain complexes concentrated in degree~$0$): The action of $S_\tau$
  on~$f_\tau + S_\tau$ extends to an $A^\tau$-module structure of
  $\tilde A[f_\tau + S_\tau]$, and for $\rho \subseteq \tau$ the
  structure maps $\mathcal{O}(\vec k)^\tau \rTo \mathcal{O}(\vec
  k)^\rho$ are easily seen to be linear with respect to the
  ring~$A^\tau$.
\end{construction}

In effect the vector $\vec k \in \bZ^{\fan(1)}$, or rather the
collection of the~$f_\sigma$, determines a piecewise linear function
on the underlying space of~$\fan$, and we have given a combinatorial
description of the associated line bundle on~$X_\fan$.

\begin{example}
  Let $\fan$ denote the fan of the projective line; it is a fan
  in~$\bR$ with $1$-cones the non-positive and non-negative real
  numbers, respectively. For a vector $\vec k = (k_1, k_2) \in \bZ^2$
  the diagram $\mathcal{O}(\vec k)$ then has the form
  \[T^{k_1} \cdot A[T^{-1}] \rTo^\subset A[T,T^{-1}] \lTo^\supset
  T^{-k_2} \cdot A[T]\]
  which, as a quasi-coherent sheaf, is isomorphic to the algebraic
  geometers' sheaf $\mathcal{O}_{\mathbb{P}^1} (k_1+k_2)$.
\end{example}

In general, recall that $S_\tau = \{g \in M \,|\,
\forall \rho \in \tau(1) \colon g(n_\rho) \geq 0\}_+$. The map $g
\mapsto f_\tau + g$ defines an $S_\tau$-equivariant bijection
from~$S_\tau$ to
\begin{equation}
  \label{eq:def_B_k_tau}
  B(\vec k)_\tau := f_\tau + S_\tau = \{ g \in M \,|\, \forall \rho
  \in \tau(1) \colon g(n_\rho) \geq -k_\rho \}_+ \ .
\end{equation}
In particular, {\it $\mathcal{O}(\vec k)^\tau$ is a free
  $A^\tau$-module of rank~$1$}.
  
From the construction it is clear that {\it given another vector $\vec
  \ell \in \bZ^{\fan(1)}$ with $\vec\ell \leq \vec k$ (componentwise
  inequality) we have a canonical injection (inclusion map)
  $\mathcal{O}(\vec\ell) \rTo \mathcal{O}(\vec k)$}.

\begin{lemma}
  \label{lem:monoid_generators}
  Given a $1$-cone $\rho \in \fan(1)$ and a cone~$\sigma$ properly
  containing~$\rho$, let $\tau \in \fan$ denote the maximal face
  of~$\sigma$ not containing~$\rho$ (this is well-defined since $\fan$
  is regular). Let $f \in M$ be a linear form which takes the
  value~$1$ on the primitive generator of~$\rho$, and takes the value
  $0$ on the primitive generators of~$\tau$.  Then $f \in S_\sigma$,
  and $S_\tau = S_\sigma + \bZ f$. In other words, the monoid $S_\tau$
  is obtained from~$S_\sigma$ by inverting the element~$f$.
\end{lemma}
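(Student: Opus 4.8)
The plan is to use the regularity of~$\fan$ to reduce the statement to an explicit computation in a lattice basis, and then to verify $S_\tau = S_\sigma + \bZ f$ by a direct double inclusion. First I would invoke regularity to choose primitive generators $n_1, \ldots, n_k$ of the $1$-cones of~$\sigma$ that form part of a $\bZ$-basis of~$N$, numbered so that $n_k = n_\rho$ is the primitive generator of~$\rho$. Since $\sigma$ is then simplicial, its faces correspond to subsets of $\{n_1, \ldots, n_k\}$, and a face avoids~$\rho$ precisely when it omits~$n_k$; hence the maximal face of~$\sigma$ not containing~$\rho$ is the cone spanned by $n_1, \ldots, n_{k-1}$, which is the given~$\tau$. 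In particular $\sigma(1) = \{\rho_1, \ldots, \rho_k\}$ and $\tau(1) = \{\rho_1, \ldots, \rho_{k-1}\}$. By hypothesis $f(n_k) = 1$ and $f(n_j) = 0$ for $1 \leq j \leq k-1$.

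The assertion $f \in S_\sigma$ is then immediate from the definition~(\ref{eq:S_sigma}): membership only requires $f(n_j) \geq 0$ for the generators $n_1, \ldots, n_k$ of the $1$-cones of~$\sigma$, and indeed $f(n_j) = 0$ for $j < k$ while $f(n_k) = 1$.

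For the equality $S_\tau = S_\sigma + \bZ f$ I would argue by double inclusion, working with the underlying (unpointed) monoids and carrying the absorbing element~$*$ along formally. Note that $S_\sigma$ is cut out by the inequalities $g(n_j) \geq 0$ for $1 \leq j \leq k$, whereas $S_\tau$ is cut out only by the inequalities for $1 \leq j \leq k-1$. For the inclusion~$\supseteq$, any element $s + mf$ with $s \in S_\sigma$ and $m \in \bZ$ satisfies $(s+mf)(n_j) = s(n_j) \geq 0$ for $j < k$, since $f$ vanishes on $n_1, \ldots, n_{k-1}$, so it lies in~$S_\tau$. For the inclusion~$\subseteq$, given $g \in S_\tau$ I set $m = g(n_k)$ and $s = g - mf$: then $s(n_j) = g(n_j) \geq 0$ for $j < k$ and $s(n_k) = g(n_k) - m = 0$, so $s \in S_\sigma$ and $g = s + mf$. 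Since $f \in S_\sigma$, the set $S_\sigma + \bZ f$ is exactly the localisation of~$S_\sigma$ obtained by inverting~$f$, which yields the final claim.

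I do not expect a genuine obstacle here; the content is elementary once a basis is fixed. The only points requiring care are the identification of~$\tau$ as the cone spanned by $n_1, \ldots, n_{k-1}$, which is where regularity of~$\fan$ (and hence simpliciality of~$\sigma$) is essential, and the observation that membership in~$S_\sigma$ and in~$S_\tau$ depends only on the values of a form on $n_1, \ldots, n_k$; consequently the unspecified values of~$f$ on the remaining basis vectors are irrelevant to the argument.
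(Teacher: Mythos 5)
Your proof is correct and follows essentially the same route as the paper's: verify $f \in S_\sigma$ directly from the defining inequalities, get $S_\sigma + \bZ f \subseteq S_\tau$ because $f$ vanishes on the primitive generators of~$\tau$, and get the reverse inclusion by writing $g = (g - g(n_\rho)f) + g(n_\rho)f$. The extra discussion identifying $\tau$ as the span of $n_1,\ldots,n_{k-1}$ is a harmless elaboration the paper leaves implicit.
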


\begin{proof}
  Let $n_1, \ldots, n_k$ be the primitive generators of~$\tau$, and
  let $n_{k+1}$ be the primitive generator of~$\rho$.

  A liner form $g \in M$ is in $S_\sigma$ if and only if it evaluates
  to non-negative numbers on primitive generators of~$\sigma$, \ie, if
  and only if $g(n_i) \geq 0$ for $1 \leq i \leq k+1$. So $f \in
  S_\sigma$ as claimed.

  Similarly, we have $g \in S_\tau$ if and only if $g(n_i) \geq 0$ for
  $1 \leq i \leq k$. Thus we have the inclusion $S_\tau \supseteq
  S_\sigma + \bZ f$. For the reverse inclusion, let $g \in
  S_\tau$. Then $\big(g - g(n_{k+1}) \cdot f\big) (n_{k+1}) = 0$, so
  \[g = \big( g- g(n_{k+1}) \cdot f\big) + g(n_{k+1}) \cdot f\]
  is an element of $S_\sigma + \bZ f$ as claimed.
\end{proof}


\begin{construction}
  \label{con:restriction_of_bundle}
  Let $\vec k \in \bZ^{\fan(1)}$ and $\rho \in \fan(1)$ be
  given. Suppose that $k_\rho = 0$.  The vector $\vec k$
  defines a line bundle on $V_\rho = X_{\fan/\rho}$ corresponding to a
  vector $\vec \ell \in \bZ^{(\fan/\rho)(1)}$ described as
  follows. Since $\fan/\rho$ is isomorphic to~$\st(\rho)$ we can write
  $\vec \ell = (\ell_\sigma)$ where $\sigma$~ranges over the
  $2$-dimensional cones in~$\st(\rho)$.  For such a cone~$\sigma$ let
  $\tau$ denote the $1$-cone contained in it different from~$\rho$,
  and set $\ell_\sigma = k_\tau$.
\end{construction}

For $\rho \in \fan(1)$ recall that the fan of~$V_\rho$ is a fan in
$(N/\bZ \rho)_\bR \iso N/\bR \rho$, and that $N/\bZ\rho$ and $M \cap
\rho^\perp$ are dual to each other. Let $\vec k \in \bZ^{\fan(1)}$
with $k_\rho = 0$. Given a cone $\bar \sigma$ in the quotient
fan, corresponding to $\sigma \in \st (\rho)$, the module
$\mathcal{O}(\vec \ell)^{\bar \sigma}$ is the reduced free $A$-module
with basis
\begin{eqnarray}
  \label{basis-for-extension}
  && \Big\{f \in M \cap \rho^\perp \,|\, f(n_\tau) \geq -k_\tau \hbox{
  for } \tau \in \sigma(1) \setminus \{\rho\} \Big\}_+ \nonumber \\
  &=& \Big\{ f \in M \,|\, f(n_\rho) = 0 \hbox{ and } f(n_\tau) \geq
  -k_\tau \hbox{ for } \tau \in \sigma(1) \setminus \{\rho\} \Big\}_+
  \ .
\end{eqnarray}
Using this explicit description, it is readily verified that
$\mathcal{O}(\vec \ell)^{\bar\sigma}$ is isomorphic to $A^{\bar
  \sigma} \tensor_{A^\sigma} \mathcal{O}(\vec k)^\sigma$, where the
tensor product is formed with respect to the surjection $A^\sigma \rTo
A^{\bar\sigma}$ from~(\ref{eq:surj_Asigma_Abarsigma}). In fact, $A^{\bar
  \sigma} \tensor_{A^\sigma} \mathcal{O}(\vec k)^\sigma$ is the
reduced free $A$-module on the pointed set $S_{\bar\sigma}
\wedge_{S_\sigma} B(\vec k)_\sigma$, formed with respect to the
surjection $S_\sigma \rTo S_{\bar\sigma}$
from~(\ref{eq:surj_Ssigma_Sbarsigma}), which is isomorphic to the set
specified in~(\ref{basis-for-extension}) above.

\begin{corollary}
  For $\rho \in \fan(1)$ and $\vec k \in \bZ^{\fan(1)}$ with $k_\rho =
  0$, let $\vec \ell$ denote the vector described in
  Construction~\ref{con:restriction_of_bundle}.  Then there is an
  isomorphism $\mathcal{O}(\vec k)|_{V_\rho} \iso \mathcal{O}(\vec
  \ell)$ of objects in~$\pre(\fan/\rho)$. In words, the restriction of
  the line bundle~$\mathcal{O}(\vec k) \in \pre (\fan)$ to
  $X_{\fan/\rho} = V_\rho$ is the line bundle~$\mathcal{O}(\vec\ell)
  \in \pre(\fan/\rho)$. \qed
\end{corollary}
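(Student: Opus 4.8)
The plan is to verify the claimed isomorphism objectwise and then check that the objectwise isomorphisms assemble into a map of diagrams over $\fan/\rho$. By the description of the restriction functor in~\ref{par:restriction}, the value of $\mathcal{O}(\vec k)|_{V_\rho}$ at a cone $\bar\sigma$ of $\fan/\rho$, corresponding to $\sigma \in \st(\rho)$, is $A^{\bar\sigma} \tensor_{A^\sigma} \mathcal{O}(\vec k)^\sigma$, the tensor product formed along the surjection~(\ref{eq:surj_Asigma_Abarsigma}). The computation preceding the statement already identifies this module with $\mathcal{O}(\vec\ell)^{\bar\sigma}$: both are reduced free $A$-modules on the pointed set $S_{\bar\sigma} \wedge_{S_\sigma} B(\vec k)_\sigma$, which is in turn identified with the basis set~(\ref{basis-for-extension}) of $\mathcal{O}(\vec\ell)^{\bar\sigma}$. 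Since this bijection of pointed sets is $S_{\bar\sigma}$-equivariant, its linearisation is an isomorphism of $A^{\bar\sigma}$-modules, so I obtain the required objectwise isomorphism.

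First I would make these pointed-set identifications completely explicit by realising $S_{\bar\sigma} \wedge_{S_\sigma} B(\vec k)_\sigma$ and the set~(\ref{basis-for-extension}) as one and the same subset of $M$, namely the linear forms $f$ with $f(n_\rho) = 0$ and $f(n_\tau) \geq -k_\tau$ for $\tau \in \sigma(1) \setminus \{\rho\}$ (together with a basepoint). With this description the identifying bijection is literally the identity on forms, so there is nothing to choose.

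Next I would check naturality with respect to the structure maps, which is the only point requiring genuine attention. For an inclusion $\bar\tau \subseteq \bar\sigma$ in $\fan/\rho$, coming from $\tau \subseteq \sigma$ in $\st(\rho)$, the structure maps of both diagrams are the linearisations of inclusions of the corresponding subsets of $M$: the pointed set for $\bar\sigma$ sits inside that for $\bar\tau$ precisely because its defining inequalities are indexed over a smaller set of $1$-cones. Since, under the identifications above, the objectwise isomorphisms are the identity on these subsets of $M$, they automatically commute with the inclusion maps, and the family is therefore natural.

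The hard part will be purely a matter of book-keeping rather than mathematics: I must make sure that the $A^{\bar\sigma}$-module structure arising from extension of scalars along~(\ref{eq:surj_Asigma_Abarsigma}) agrees with the intrinsic $A^{\bar\sigma}$-module structure on $\mathcal{O}(\vec\ell)^{\bar\sigma}$, and that basepoints correspond. Both follow immediately from the $S_{\bar\sigma}$-equivariance of the pointed-set bijection. Once the explicit descriptions are in place, the corollary then follows directly from the computation preceding its statement.
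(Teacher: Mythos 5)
Your proposal is correct and follows essentially the same route as the paper: the paper proves this corollary precisely by the computation immediately preceding its statement, identifying $A^{\bar\sigma}\tensor_{A^\sigma}\mathcal{O}(\vec k)^\sigma$ with the reduced free $A$-module on $S_{\bar\sigma}\wedge_{S_\sigma}B(\vec k)_\sigma$ and matching that pointed set with the basis~(\ref{basis-for-extension}) of $\mathcal{O}(\vec\ell)^{\bar\sigma}$, which is why the corollary carries only a \qedsymbol. Your explicit check of naturality of the structure maps is a detail the paper leaves implicit, but it is the right thing to verify and works exactly as you describe.
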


Note that (\ref{basis-for-extension}) also specifies an $A$-basis of
the module $\zeta \left(\mathcal{O}(\vec \ell) \right)^\sigma$ in the
extension by zero. Using (\ref{eq:surj_Ssigma_Sbarsigma}) we can give
an explicit description of the $S_\sigma$\nobreakdash-action on this
set: The element $a \in S_\sigma$ acts by addition if $a(n_\rho) = 0$,
and acts as the zero operator if $a(n_\rho) \not= 0$.

\begin{proposition}
  \label{cofibre-of-inclusion}
  Let $\rho$ be a $1$-cone in $\fan$, and let $\vec k \in
  \bZ^{\fan(1)}$ be a vector with $k_\rho = 0$. Then the cofibre of
  the inclusion map
  \[i \colon \mathcal{O}(\vec k-\vec\rho) \rTo \mathcal{O}(\vec k)\]
  is isomorphic to the extension by zero of the restriction of
  $\mathcal{O}(\vec k)$ to~$V_\rho$. Here $\vec\rho \in
  \bZ^{\fan(1)}$ is the $\rho$-th unit vector, \ie, the vector with
  $\rho$-component~$1$ and all other entries zero.
\end{proposition}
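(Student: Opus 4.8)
The plan is to compute the cofibre objectwise and identify it, cone by cone, with the extension-by-zero $\zeta\big(\mathcal{O}(\vec k)|_{V_\rho}\big)$ whose entries were described in~(\ref{basis-for-extension}). Both $\mathcal{O}(\vec k - \vec\rho)$ and $\mathcal{O}(\vec k)$ are presheaves concentrated in degree~$0$ with free rank-one entries, and the map~$i$ is, at each cone~$\sigma$, the inclusion of reduced monoid algebras $\tilde A[B(\vec k-\vec\rho)_\sigma] \rTo \tilde A[B(\vec k)_\sigma]$ induced by the inclusion of pointed sets $B(\vec k - \vec\rho)_\sigma \subseteq B(\vec k)_\sigma$ (see~(\ref{eq:def_B_k_tau}); the inclusion holds because $\vec k - \vec\rho \leq \vec k$ componentwise). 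Since $i$ is thus an objectwise monomorphism of presheaves concentrated in degree~$0$, its mapping cone---which represents the cofibre---is objectwise quasi-isomorphic to the objectwise cokernel, and it remains only to identify this cokernel with $\zeta\big(\mathcal{O}(\vec k)|_{V_\rho}\big)$ as an object of~$\pre(\fan)$.

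First I would compute the cokernel at each cone~$\sigma$. If $\rho \not\subseteq \sigma$, then the $\rho$-component of the vector plays no role in the defining inequalities of~(\ref{eq:def_B_k_tau}), so $B(\vec k - \vec\rho)_\sigma = B(\vec k)_\sigma$, the map~$i^\sigma$ is an isomorphism, and the cokernel vanishes---matching $\zeta(\cdots)^\sigma = 0$. If $\rho \subseteq \sigma$, then using the hypothesis $k_\rho = 0$ the set $B(\vec k)_\sigma$ imposes $g(n_\rho) \geq 0$ whereas $B(\vec k - \vec\rho)_\sigma$ imposes the strictly stronger $g(n_\rho) \geq 1$. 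Hence a basis for the cokernel is given precisely by those $g \in M$ with $g(n_\rho) = 0$ and $g(n_\tau) \geq -k_\tau$ for $\tau \in \sigma(1) \setminus \{\rho\}$, which is exactly the pointed set~(\ref{basis-for-extension}) describing $\zeta\big(\mathcal{O}(\vec k)|_{V_\rho}\big)^\sigma$ (recall from the preceding Corollary that $\mathcal{O}(\vec k)|_{V_\rho} \iso \mathcal{O}(\vec\ell)$ for the vector~$\vec\ell$ of Construction~\ref{con:restriction_of_bundle}).

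The remaining step is to check that these cone-by-cone bijections of bases assemble into an isomorphism of objects of~$\pre(\fan)$, that is, that they respect the twisted $A^\sigma$-module structures and the structure maps. The $S_\sigma$-action on $\mathcal{O}(\vec k)^\sigma$ is translation $g \mapsto a + g$; for $a \in S_\sigma$ and a basis element $g$ with $g(n_\rho) = 0$ one has $(a+g)(n_\rho) = a(n_\rho)$, so $a + g$ survives in the cokernel exactly when $a(n_\rho) = 0$ and lands in $B(\vec k - \vec\rho)_\sigma$ (hence is killed) when $a(n_\rho) > 0$. This is precisely the $S_\sigma$-action on the extension by zero recorded after~(\ref{eq:surj_Ssigma_Sbarsigma})---addition when $a(n_\rho) = 0$, the zero operator otherwise---so the induced $A^\sigma$-structures, which factor through the surjection $A^\sigma \rTo A^{\bar\sigma}$, agree. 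A parallel check, tracing the inclusions defining the structure maps of both presheaves, gives compatibility with the restriction maps for $\tau \subseteq \sigma$.

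The main obstacle is this last compatibility bookkeeping. The module-level identifications are immediate from the inequality descriptions, but confirming that the twisted action through the surjection of~(\ref{eq:surj_Ssigma_Sbarsigma}) coincides with the quotient action on the cokernel is where the hypothesis $k_\rho = 0$ is genuinely used---it is exactly what forces the threshold to jump from $0$ to $1$, so that the quotient singles out the locus $g(n_\rho) = 0$---and where care is needed to see that the $A^\sigma$-linearity of~$i$ descends correctly. Once this is verified the isomorphism of presheaves follows, completing the proof.
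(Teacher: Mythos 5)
Your proposal is correct and takes essentially the same route as the paper's own proof: split into the cases $\rho\not\subseteq\sigma$ and $\rho\subseteq\sigma$, identify the cokernel's pointed $A$-basis as the complement of $B(\vec k-\vec\rho)_\sigma$ in $B(\vec k)_\sigma$, and match it with the set~(\ref{basis-for-extension}). You in fact supply more detail than the paper on the point it leaves to the reader (compatibility of the $S_\sigma$-actions and structure maps); the only superfluous step is the detour through the mapping cone, since for this injection of presheaves the cofibre is simply the cokernel.
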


\begin{proof}
  Let $C$ denote the cofibre of~$i$, and let $E = \zeta \big(\varepsilon
  (\mathcal{O}(\vec k))\big)$ denote the extension by zero of
  the restriction.

  Let $\sigma \in \fan \setminus \st(\rho)$ so that $\rho
  \not\subseteq \sigma$. We have $E^\sigma = 0$ by definition of
  extension, and we also have $C^\sigma = 0$ since $\mathcal{O}(\vec
  k)^\sigma = \mathcal{O}(\vec k + e_\rho)^\sigma$.  So the
  $\sigma$-components of~$C$ and~$E$ coincide in this case.

  Now let $\sigma \in \st (\rho)$. We know that $C^\sigma$ is a free
  $A$-module with pointed basis given by the cofibre of the inclusion
  of pointed sets
  \[B(\vec k-\vec\rho)_\sigma \rTo B(\vec k)_\sigma \ ,\]
  cf.~(\ref{eq:def_B_k_tau}) for notation.  Cofibres of pointed sets
  can be computed by taking complements and adding a base point. It
  follows by inspection that $C^\sigma$ has a pointed $A$-basis given
  by the set described in~(\ref{basis-for-extension}) which is also a
  pointed $A$\nobreakdash-basis of~$E^\sigma$ by the discussion
  before. Hence the $\sigma$-components of $C$ and~$E$ agree in this
  case as well.

  The reader can check that the structure maps of~$C$ and~$E$
  correspond under these identifications.
\end{proof}

\begin{definition}
  Given $\vec k \in \bZ^{\fan(1)}$ and $C \in \pre(\fan)$, we define
  the {\it $\vec k$-th twist of~$C$}, denoted $C(\vec k)$, by
  \[C(\vec k)^\sigma = \mathcal{O}(\vec k)^\sigma \tensor_{A^\sigma}
  C^\sigma\]
  with structure maps induced by those of~$C$ and~$\mathcal{O}(\vec
  k)$.
\end{definition}

This definition corresponds to tensoring a quasi-coherent sheaf with
the line bundle~$\mathcal{O}(\vec k)$, expressed in the language of
diagrams.

It is easy to check that $C (\vec k) (\vec \ell) \iso C(\vec k + \vec
\ell)$. For $\sigma$-components this comes from the isomorphism
$\mathcal{O}(\vec k)^\sigma \tensor_{A^\sigma}
\mathcal{O}(\ell)^\sigma \iso \mathcal{O}(\vec k + \vec
\ell)^\sigma$. Since $C(\vec 0) \iso C$, this proves:

\begin{lemma}
  \label{lem:twist_auto_equiv} Let $\vec k \in \bZ^{\fan(1)}$. The
    twisting functor $C \mapsto C(\vec k)$ is a self-equivalence
    of~$\pre(\fan)$ with inverse $C \mapsto C(-\vec k)$. \qed
\end{lemma}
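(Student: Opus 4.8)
The plan is to derive the statement from the two isomorphisms recorded just before it: the additivity isomorphism $C(\vec k)(\vec\ell) \iso C(\vec k+\vec\ell)$ and the normalisation $C(\vec 0)\iso C$. Abbreviating the twisting functor $C\mapsto C(\vec k)$ by $T_{\vec k}$, additivity reads $T_{\vec\ell}\circ T_{\vec k}\iso T_{\vec k+\vec\ell}$. Putting $\vec\ell=-\vec k$ yields $T_{-\vec k}\circ T_{\vec k}\iso T_{\vec 0}\iso\id$, and exchanging the roles of $\vec k$ and $-\vec k$ yields $T_{\vec k}\circ T_{-\vec k}\iso\id$. A functor possessing a two-sided inverse up to natural isomorphism is an equivalence, so these two relations are precisely the assertion, with $T_{-\vec k}$ the inverse of $T_{\vec k}$. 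Everything therefore reduces to producing the additivity and normalisation isomorphisms naturally in~$C$.

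To build the additivity isomorphism I would argue componentwise. By Construction~\ref{def:twisting_sheaf} and~(\ref{eq:def_B_k_tau}) the module $\mathcal{O}(\vec k)^\sigma$ is the reduced free $A$-module on the pointed $S_\sigma$-set $B(\vec k)_\sigma=f_\sigma+S_\sigma$, which is free of rank one over~$S_\sigma$; since the monoid elements act centrally, $\mathcal{O}(\vec k)^\sigma\tensor_{A^\sigma}\mathcal{O}(\vec\ell)^\sigma$ is then the reduced free $A$-module on the smash product $B(\vec k)_\sigma\wedge_{S_\sigma}B(\vec\ell)_\sigma$. The assignment $(f_\sigma+a)\wedge(f'_\sigma+b)\mapsto(f_\sigma+f'_\sigma)+(a+b)$ identifies this smash product with $B(\vec k+\vec\ell)_\sigma$, the key point being that $f_\sigma+f'_\sigma$ takes the value $-(k_\rho+\ell_\rho)$ on every $n_\rho$ and hence is a legitimate representative form for $\vec k+\vec\ell$. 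Linearising gives a componentwise isomorphism $\mathcal{O}(\vec k)^\sigma\tensor_{A^\sigma}\mathcal{O}(\vec\ell)^\sigma\iso\mathcal{O}(\vec k+\vec\ell)^\sigma$; tensoring with $C^\sigma$ and reassociating the threefold tensor product over~$A^\sigma$ then produces $(T_{\vec\ell}T_{\vec k}C)^\sigma\iso(T_{\vec k+\vec\ell}C)^\sigma$, using that the additivity isomorphism is symmetric in $\vec k$ and $\vec\ell$ because addition in~$M$ is commutative. The normalisation $C(\vec 0)\iso C$ is the unit isomorphism, since $f_\sigma$ may be taken to be~$0$, whence $B(\vec 0)_\sigma=S_\sigma$ and $\mathcal{O}(\vec 0)^\sigma=A^\sigma$.

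The step I expect to demand the most care is not any of these componentwise computations but the verification that the isomorphisms respect the restriction structure maps, so that they assemble into genuine morphisms of $\pre(\fan)$, together with their naturality in~$C$. For a face $\tau\subseteq\sigma$ the structure maps of the line bundles are the inclusions $f_\sigma+S_\sigma\hookrightarrow f_\tau+S_\tau$, and compatibility hinges on the fact observed in Construction~\ref{def:twisting_sheaf} that $f_\sigma$ and $f_\tau$ agree on~$\tau$, so that a coherent choice of representatives across the poset can be made and the smash-product identification commutes with these inclusions. Naturality in~$C$ is then immediate, because the $C$-variable enters only through the untouched tensor factor $C^\sigma$ and every map in sight is induced by an $A^\sigma$-linear map. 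Granting this compatibility, the componentwise isomorphisms constitute natural isomorphisms of endofunctors of $\pre(\fan)$, the composites $T_{-\vec k}T_{\vec k}$ and $T_{\vec k}T_{-\vec k}$ are naturally isomorphic to the identity, and the lemma follows.
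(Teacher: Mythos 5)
Your proposal is correct and follows the same route as the paper, which deduces the lemma from the additivity isomorphism $C(\vec k)(\vec\ell)\iso C(\vec k+\vec\ell)$ (reduced to the componentwise isomorphism $\mathcal{O}(\vec k)^\sigma\tensor_{A^\sigma}\mathcal{O}(\vec\ell)^\sigma\iso\mathcal{O}(\vec k+\vec\ell)^\sigma$) together with $C(\vec 0)\iso C$; the paper merely asserts these as ``easy to check'', whereas you supply the underlying identification of the pointed sets $B(\vec k)_\sigma$ and the compatibility with structure maps.
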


For $\sigma \in \fan$ there is an $S_\sigma$-equivariant bijection
$B(\vec k)_\sigma \rTo S_\sigma$, cf.~(\ref{eq:def_B_k_tau}). Note
that this bijection is not canonical: It may be modified by adding or
subtracting a fixed invertible element of~$S_\sigma$. By passing to
free $A$-modules, we obtain a non-canonical isomorphism
$\mathcal{O}(\vec k)^\sigma \iso A^\sigma$ and consequently a
non-canonical isomorphism $C(\vec k)^\sigma \iso C^\sigma$. This
implies that twisting preserves and detects weak equivalences of
presheaves, preserves $c$-fibrations (objectwise surjections), and
preserves $f$-cofibrations (objectwise cofibrations). From
Lemma~\ref{lem:twist_auto_equiv} we thus conclude:

\goodbreak

\begin{corollary}
  \label{cor:twist_is_Quillen}
  Let $\vec k \in \bZ^{\fan(1)}$.
  \begin{enumerate}
  \item[{\rm (1)}] The twisting functor $C \mapsto C(\vec k)$ is a left and right
    \textsc{Quillen} functor with respect to the $c$-structure; in
    particular, if $C \in \pre(\fan)$ is $c$-cofibrant so is~$C(\vec
    k)$.
  \item[{\rm (2)}] The twisting functor $C \mapsto C(\vec k)$ is a left and right
    \textsc{Quillen} functor with respect to the $f$-structure; in
    particular, if $C \in \pre(\fan)$ is $f$-fibrant so is~$C(\vec
    k)$. \qed
  \end{enumerate}
\end{corollary}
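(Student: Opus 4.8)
The plan is to reduce Corollary~\ref{cor:twist_is_Quillen} to the observations accumulated immediately before its statement, since the twisting functor has already been shown to be a self-equivalence (Lemma~\ref{lem:twist_auto_equiv}) that preserves and detects weak equivalences, preserves $c$-fibrations, and preserves $f$-cofibrations. The key structural fact I would exploit is that an auto-equivalence of a model category which preserves weak equivalences is automatically both left and right Quillen \emph{provided} it also preserves the relevant fibrations or cofibrations in both directions. Because twisting by~$\vec k$ has inverse twisting by~$-\vec k$ (Lemma~\ref{lem:twist_auto_equiv}), any preservation property enjoyed by~$C \mapsto C(\vec k)$ for all~$\vec k$ is automatically shared by the inverse functor, which lets me upgrade one-directional preservation statements to ``preserves and reflects''.

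For part~(1), the $c$-structure, I would argue as follows. The functor preserves weak equivalences and $c$-fibrations; applying the same to the inverse functor $C \mapsto C(-\vec k)$ shows it also reflects them. A right Quillen functor is characterised by preserving fibrations and acyclic fibrations; since twisting preserves both weak equivalences and $c$-fibrations, it preserves $c$-acyclic $c$-fibrations as well, so $C \mapsto C(\vec k)$ is right Quillen. To see it is simultaneously left Quillen I would invoke the standard fact that if a functor $F$ and its inverse $F^{-1}$ are both right Quillen, then $F$ is also left Quillen (its right adjoint $F^{-1}$ being right Quillen makes $F$ left Quillen by the adjunction criterion). The $c$-cofibrancy claim then follows because a left Quillen functor preserves cofibrant objects.

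For part~(2), the $f$-structure, I would run the dual argument. Here the recorded property is that twisting preserves $f$-cofibrations (objectwise cofibrations); combined with preservation of weak equivalences this gives preservation of $f$-acyclic $f$-cofibrations, so $C \mapsto C(\vec k)$ is left Quillen for the $f$-structure. Applying the inverse-functor trick again shows both $F$ and $F^{-1}$ are left Quillen, whence $F$ is also right Quillen. Preservation of $f$-fibrant objects is then the statement that a right Quillen functor preserves fibrant objects.

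The only point requiring genuine care is making the adjunction bookkeeping precise: strictly speaking ``left and right Quillen'' for a self-equivalence must be phrased via the adjoint pair $\big(C \mapsto C(\vec k),\ C \mapsto C(-\vec k)\big)$, and one must check that the self-equivalence really is adjoint to its inverse (which it is, being an equivalence, with unit and counit the natural isomorphisms of Lemma~\ref{lem:twist_auto_equiv}). The main potential obstacle is thus not any hard computation but rather verifying that the objectwise tensoring is genuinely functorial and compatible with the matching- and latching-complex descriptions of fibrations and cofibrations from~\ref{sec:f-structure} and~\ref{sec:c-structure}; this is exactly what the non-canonical isomorphism $C(\vec k)^\sigma \iso C^\sigma$ guarantees levelwise, and I would note that because this isomorphism is $A^\sigma$-linear and compatible with the structure maps up to the twist, it carries matching and latching complexes of~$C$ to those of~$C(\vec k)$, so the fibration and cofibration conditions transport cleanly.
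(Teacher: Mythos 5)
Your proposal is correct and follows the same route the paper intends: the Corollary is stated with an immediate \qed precisely because it is meant to follow from the two facts recorded just before it (twisting is a self-equivalence with inverse the $(-\vec k)$-twist, and it preserves weak equivalences, $c$-fibrations, and $f$-cofibrations objectwise), and your argument simply makes explicit the standard bookkeeping --- an equivalence is adjoint on both sides to its inverse, so one-sided preservation statements for all $\vec k$ upgrade to ``left and right Quillen'' via the adjunction criterion. Your closing worry about matching and latching complexes is in fact unnecessary for the argument as you structured it, since the adjunction trick already delivers the remaining Quillen conditions without any direct verification there.
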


\begin{lemma}
  \label{lem:hom_lim_iso}
  For $\vec k \in \bZ^{\fan(1)}$ and $C \in \pre(\fan)$ there are
  isomorphisms
  \[\hom_{\pre(\fan)} \big(\mathcal{O}(\vec k),\, C\big) \iso
  \hom_{\pre(\fan)} \big(\mathcal{O},\, C(-\vec k) \big) \iso \lim
  C(-\vec k) \ .\]
  These isomorphisms are natural in~$C$.
\end{lemma}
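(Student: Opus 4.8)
The plan is to obtain the first isomorphism formally from the fact that twisting is a self-equivalence, and to prove the second isomorphism directly, by exhibiting~$\mathcal{O}$ as the object corepresenting the inverse-limit functor on $\pre(\fan)$. Throughout I read $\hom_{\pre(\fan)}(-,-)$ as the enriched hom-complex (its degree-$n$ part consisting of the natural families of $A^\sigma$-linear degree-$n$ maps), which is forced by the fact that the right-hand side $\lim C(-\vec k)$ is a chain complex.

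For the first isomorphism I would invoke Lemma~\ref{lem:twist_auto_equiv}: the functor $(-)(-\vec k)$ is an equivalence of $\pre(\fan)$ with itself, hence induces an isomorphism on hom-objects,
\[\hom_{\pre(\fan)}\big(\mathcal{O}(\vec k),\, C\big) \iso \hom_{\pre(\fan)}\big(\mathcal{O}(\vec k)(-\vec k),\, C(-\vec k)\big) \ .\]
It then remains only to identify the source on the right. Applying the multiplicativity $C(\vec k)(\vec\ell) \iso C(\vec k + \vec\ell)$ recorded just before Lemma~\ref{lem:twist_auto_equiv} with $C = \mathcal{O}$ and $\vec\ell = -\vec k$ gives $\mathcal{O}(\vec k)(-\vec k) \iso \mathcal{O}(\vec 0) = \mathcal{O}$, which yields the stated isomorphism. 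This step is routine once the self-equivalence is in hand.

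The substance of the lemma is the second isomorphism, which I would prove for an arbitrary $D \in \pre(\fan)$ in the form $\hom_{\pre(\fan)}(\mathcal{O}, D) \iso \lim D$ and then specialise to $D = C(-\vec k)$. The key point is that $\mathcal{O}^\sigma = A^\sigma$ is the free $A^\sigma$-module of rank~$1$ on the generator~$1$, concentrated in degree~$0$, and that the structure maps $\mathcal{O}^\sigma \rTo \mathcal{O}^\tau$ are exactly the algebra inclusions $A^\sigma \hookrightarrow A^\tau$, sending $1 \mapsto 1$. Hence a degree-$n$ element of the hom-complex, that is, a natural family of $A^\sigma$-linear degree-$n$ maps $\phi^\sigma \colon A^\sigma \rTo D^\sigma$, is completely determined by the elements $d_\sigma := \phi^\sigma(1) \in D^\sigma_n$ via $\phi^\sigma(a) = a\cdot d_\sigma$; and naturality for each inclusion $\tau \subseteq \sigma$ says precisely that the structure map $D^\sigma \rTo D^\tau$ carries $d_\sigma$ to $d_\tau$. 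Thus the families $(d_\sigma)_\sigma$ range over $(\lim D)_n = \lim_\sigma D^\sigma_n$, and $\phi \mapsto (d_\sigma)_\sigma$ is a degreewise bijection; since the differential on the hom-complex is induced by that of~$D$, which is also the differential defining $\lim D$, this bijection is an isomorphism of chain complexes.

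Finally, naturality in~$C$ follows because both constructions are natural: the first isomorphism is natural by functoriality of twisting, and the identification $\hom_{\pre(\fan)}(\mathcal{O}, D) \iso \lim D$ is manifestly natural in~$D$, hence natural in~$C$ after substituting $D = C(-\vec k)$. I expect the only delicate point to be the bookkeeping in the third paragraph, where the $A^\sigma$-linearity of the components and the naturality over the inclusions $\tau\subseteq\sigma$ must be handled simultaneously and seen to encode nothing more than a single element of the inverse limit. No genuine obstacle arises beyond this verification, which is essentially the co-Yoneda statement that $\mathcal{O}$ corepresents global sections on $\pre(\fan)$.
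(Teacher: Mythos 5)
Your proposal is correct and takes essentially the same route as the paper, whose proof is simply ``by inspection, using the trivial fact that $\mathcal{O}^\sigma = A^\sigma$ is the free $A^\sigma$-module of rank~$1$'' --- your third paragraph is exactly that inspection spelled out, and the first isomorphism via the twisting self-equivalence of Lemma~\ref{lem:twist_auto_equiv} is the intended reading. No gaps.
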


\begin{proof}
  This follows from inspection, using the trivial fact that
  $\mathcal{O}^\sigma = A^\sigma$ is the free $A^\sigma$-module of
  rank~$1$.
\end{proof}

\section{Sheaves, homotopy sheaves, and colocalisation}

\subsection{Sheaves and homotopy sheaves}

\begin{definition}
  \label{def:sheaves}
  An object $C \in \pre(\fan)$ is called a {\it (strict) sheaf\/} if for all
  inclusions $\sigma \subseteq \tau$ in $\fan$ the map
  \begin{equation}
    \label{eq:adj_str_map}
    A^\sigma \tensor_{A^\tau} C^\tau \rTo C^\sigma \ ,
  \end{equation}
  adjoint to the structure map $C^\tau \rTo C^\sigma$, is an
  isomorphism. We call $C$ a {\it homotopy sheaf\/} if the
  map~(\ref{eq:adj_str_map}) is a quasi-isomorphism for all $\sigma
  \subseteq \tau$ in $\fan$.
\end{definition}

Every strict sheaf is a homotopy sheaf. Important examples of
strict sheaves are the functors~$\mathcal{O} (\vec k)$ defined in
\S\ref{sec:comb-proj}.

\begin{lemma}
  \label{lem:homotopy_invariant}
  The notion of a homotopy sheaf is homotopy invariant: Given a weak
  equivalence $C \rTo D$ in $\pre(\fan)$, the presheaf $C$ is a
  homotopy sheaf if and only if $D$ is a homotopy sheaf.
\end{lemma}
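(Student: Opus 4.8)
The plan is to reduce the statement to the behaviour of the adjoint structure maps under a weak equivalence, and then invoke the fact that these maps are built from tensoring with algebras along surjections of monoid algebras. Let $\phi \colon C \rTo D$ be the given weak equivalence, and fix an inclusion $\sigma \subseteq \tau$ in $\fan$. I would assemble the commuting square
\[
\begin{diagram}[labelstyle=\scriptstyle]
A^\sigma \tensor_{A^\tau} C^\tau & \rTo & C^\sigma \\
\dTo<{\id \tensor \phi^\tau} & & \dTo>{\phi^\sigma} \\
A^\sigma \tensor_{A^\tau} D^\tau & \rTo & D^\sigma
\end{diagram}
\]
in which the horizontal arrows are the adjoint structure maps~(\ref{eq:adj_str_map}) for $C$ and~$D$ respectively. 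Since $\phi$ is an objectwise weak equivalence, the right vertical map $\phi^\sigma$ is a quasi-isomorphism. If I can show that the left vertical map $\id \tensor \phi^\tau$ is also a quasi-isomorphism, then by the two-out-of-three property one horizontal map is a quasi-isomorphism if and only if the other is, which is exactly the claim.

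The heart of the matter is therefore to verify that tensoring the quasi-isomorphism $\phi^\tau \colon C^\tau \rTo D^\tau$ up along the algebra map $A^\tau \rTo A^\sigma$ preserves quasi-isomorphisms. I expect \emph{this} to be the main obstacle, since for a general ring homomorphism $A^\tau \rTo A^\sigma$ the functor $A^\sigma \tensor_{A^\tau} (-)$ is only right exact and need not preserve quasi-isomorphisms of arbitrary unbounded complexes. The resolution is that the inclusion $A^\tau \subseteq A^\sigma$ of reduced monoid algebras is especially well-behaved: by Lemma~\ref{lem:monoid_generators} the larger monoid is obtained from the smaller by inverting elements, so $A^\sigma$ is a localisation of $A^\tau$ and in particular flat as an $A^\tau$-module. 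Flatness guarantees that $A^\sigma \tensor_{A^\tau} (-)$ is exact, hence commutes with homology and sends the quasi-isomorphism $\phi^\tau$ to the quasi-isomorphism $\id \tensor \phi^\tau$, even for unbounded complexes.

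With flatness in hand the argument is essentially complete: I would spell out that the square above commutes by naturality of the adjunction unit defining~(\ref{eq:adj_str_map}), observe that $\phi^\sigma$ and $\id \tensor \phi^\tau$ are both quasi-isomorphisms (the former by hypothesis, the latter by the flatness argument just described), and conclude by two-out-of-three that the top adjoint structure map of~$C$ is a quasi-isomorphism precisely when the bottom adjoint structure map of~$D$ is. Running this over every inclusion $\sigma \subseteq \tau$ in the finite fan~$\fan$ yields the equivalence: $C$ is a homotopy sheaf if and only if $D$ is. The only point requiring genuine care is the flatness claim for the inclusions $A^\tau \subseteq A^\sigma$; everything else is formal naturality and two-out-of-three.
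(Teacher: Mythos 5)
Your proposal is correct and follows essentially the same route as the paper: both form the commuting square comparing the adjoint structure maps of $C$ and $D$, observe that $A^\sigma$ is a localisation of $A^\tau$ (the paper cites Fulton directly where you invoke Lemma~\ref{lem:monoid_generators}) so that $A^\sigma \tensor_{A^\tau} (-)$ is exact and preserves quasi-isomorphisms, and conclude by two-out-of-three. The only cosmetic difference is that you spell out the flatness/exactness justification for unbounded complexes in more detail than the paper does.
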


\begin{proof}
  For all $\sigma \subseteq \tau$ in~$\fan$ the monoid $S_\sigma$ is
  obtained from~$S_\tau$ by inverting an element of~$S_\tau$,
  cf.~\cite[\S2.1, Proposition~2]{Fulton-toric}, so that $A_\sigma$ is
  a localisation of~$A_\tau$. Since localisation is exact both
  vertical maps in the following square diagram are quasi-isomorphisms:
  \begin{diagram}
    A^\sigma \tensor_{A^\tau} C^\tau & \rTo & C^\sigma \\
    \dTo && \dTo \\
    A^\sigma \tensor_{A^\tau} D^\tau & \rTo & D^\sigma
  \end{diagram}
  This proves that the upper horizontal map is a quasi-isomorphism if
  and only if the lower horizontal map is a quasi-isomorphism.
\end{proof}

\begin{lemma}
  \label{lem:homotopy_sheaf_2_of_3}
  Suppose we have a short exact sequence
  \[0 \rTo B \rTo C \rTo D \rTo 0\]
  of objects in~$\pre(\fan)$. Then if two of the three presheaves $B$,
  $C$ and~$D$ are homotopy sheaves, so is the third.
\end{lemma}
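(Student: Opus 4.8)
The plan is to verify the homotopy-sheaf condition for the third presheaf one inclusion at a time, reducing each instance to the five lemma. So I would fix an inclusion $\sigma \subseteq \tau$ in~$\fan$ and recall, as in the proof of Lemma~\ref{lem:homotopy_invariant}, that $A^\sigma$ is a localisation of~$A^\tau$ and hence flat over it; consequently $A^\sigma \tensor_{A^\tau} (-)$ is an exact functor on $A^\tau$-module chain complexes. The given sequence is objectwise exact, so evaluating at~$\tau$ yields a short exact sequence $0 \rTo B^\tau \rTo C^\tau \rTo D^\tau \rTo 0$ of $A^\tau$-module chain complexes.

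Next I would apply the exact functor $A^\sigma \tensor_{A^\tau}(-)$ to this sequence and assemble, together with the objectwise sequence at~$\sigma$ and the three adjoint structure maps, the commutative ladder
\[
\begin{diagram}
0 & \rTo & A^\sigma \tensor_{A^\tau} B^\tau & \rTo & A^\sigma \tensor_{A^\tau} C^\tau & \rTo & A^\sigma \tensor_{A^\tau} D^\tau & \rTo & 0 \\
& & \dTo & & \dTo & & \dTo & & \\
0 & \rTo & B^\sigma & \rTo & C^\sigma & \rTo & D^\sigma & \rTo & 0
\end{diagram}
\]
Both rows are short exact sequences of chain complexes: the bottom row by objectwise exactness, the top row because $A^\sigma$ is flat over~$A^\tau$. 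The squares commute since the adjoint structure map is natural in the presheaf variable and $B \rTo C \rTo D$ are morphisms of~$\pre(\fan)$, hence compatible with all structure maps.

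Passing to homology, I would read off the morphism of long exact sequences induced by the two rows, in which the vertical maps are precisely the maps on homology of the three adjoint structure maps at the inclusion $\sigma \subseteq \tau$. By definition, $B$ (\resp\ $C$, \resp\ $D$) being a homotopy sheaf means exactly that its vertical family of maps consists of isomorphisms in all degrees. If two of the three families are isomorphisms, the five lemma forces the third to be an isomorphism as well, and this comparison is uniform across all three choices of ``which two''. Since $\sigma \subseteq \tau$ was arbitrary, the remaining presheaf then satisfies the homotopy-sheaf condition for every inclusion.

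I expect the only genuine content to be the exactness of the top row: tensoring the short exact sequence over~$A^\tau$ with~$A^\sigma$ must not create $\mathrm{Tor}$ obstructions. This is precisely guaranteed by flatness of the localisation $A^\tau \rTo A^\sigma$, which is the same input used in Lemma~\ref{lem:homotopy_invariant}; once it is in place, everything else is the standard long-exact-sequence-plus-five-lemma comparison.
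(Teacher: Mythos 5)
Your proposal is correct and follows essentially the same route as the paper: fix an inclusion $\sigma \subseteq \tau$, use that the localisation $A^\tau \rTo A^\sigma$ is flat to get exactness of the tensored row, form the commutative ladder with the adjoint structure maps as verticals, and apply the five lemma to the induced ladder of long exact homology sequences. The paper's proof is a terser version of exactly this argument.
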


\begin{proof}
  Let $\sigma \subseteq \tau$ be an inclusion of cones in~$\fan$.
  Consider the following commutative ladder diagram:
  \begin{diagram}
    0 & \rTo & A^\sigma \tensor_{A^\tau} B^\tau & \rTo[l>=3em] &
    A^\sigma \tensor_{A^\tau} C^\tau & \rTo[l>=3em] & A^\sigma
    \tensor_{A^\tau} D^\tau & \rTo & 0\\
    && \dTo && \dTo && \dTo \\
    0 & \rTo & B^\sigma & \rTo & C^\sigma & \rTo & D^\sigma &
    \rTo & 0
  \end{diagram}
  The bottom row is exact by hypothesis. Since $A^\sigma$ is a
  localisation of $A^\tau$ the top row is exact as well. Moreover, by
  hypothesis two of the vertical maps are quasi-isomorphisms. The five
  lemma, applied to the associated infinite ladder diagram of homology
  modules, guarantees that the third vertical map is a
  quasi-isomorphism as well.
\end{proof}

Since a retract of a quasi-isomorphism is a quasi-isomorphism, we also
have:

\begin{lemma}
  \label{lem:hom_sheaf_inv_retracts}
  Suppose that $C$ is a retract, in the category~$\pre(\fan)$, of the
  homotopy sheaf~$D$. Then $C$~is a homotopy sheaf. \qed
\end{lemma}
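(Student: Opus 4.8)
The plan is to exhibit, for each inclusion $\sigma \subseteq \tau$ in~$\fan$, the adjoint structure map of~$C$ from~(\ref{eq:adj_str_map}) as a retract of the corresponding map of~$D$, and then to invoke the elementary fact (already used for the preceding lemmas) that a retract of a quasi-isomorphism is a quasi-isomorphism.

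First I would unpack the hypothesis: a retract in~$\pre(\fan)$ amounts to a pair of morphisms $s \colon C \rTo D$ and $r \colon D \rTo C$ in~$\pre(\fan)$ with $r \circ s = \id_C$. Fix an inclusion $\sigma \subseteq \tau$ in~$\fan$. The assignment sending a presheaf~$E$ to the map $A^\sigma \tensor_{A^\tau} E^\tau \rTo E^\sigma$ of~(\ref{eq:adj_str_map}) is natural in~$E \in \pre(\fan)$, since it is adjoint to the natural structure map $E^\tau \rTo E^\sigma$ and both $A^\sigma \tensor_{A^\tau} (-)^\tau$ and the evaluation $(-)^\sigma$ are functorial. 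Applying this naturality to~$s$ and~$r$ yields the commuting diagram
\begin{diagram}
  A^\sigma \tensor_{A^\tau} C^\tau & \rTo & C^\sigma \\
  \dTo<{\id \tensor s^\tau} && \dTo>{s^\sigma} \\
  A^\sigma \tensor_{A^\tau} D^\tau & \rTo & D^\sigma \\
  \dTo<{\id \tensor r^\tau} && \dTo>{r^\sigma} \\
  A^\sigma \tensor_{A^\tau} C^\tau & \rTo & C^\sigma
\end{diagram}
in which both vertical composites are identity maps, because $r \circ s = \id_C$ and the functors $A^\sigma \tensor_{A^\tau} (-)^\tau$ and $(-)^\sigma$ preserve composition and identities. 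This exhibits the adjoint structure map of~$C$ as a retract, in the category of morphisms of chain complexes of $A^\sigma$-modules, of the adjoint structure map of~$D$.

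Since $D$ is a homotopy sheaf, its adjoint structure map is a quasi-isomorphism. Applying the homology functor~$H_*$ (which, being a functor, preserves retract diagrams) shows that $H_*$ of the map for~$C$ is a retract of the isomorphism $H_*$ of the map for~$D$; a retract of an isomorphism is again an isomorphism. Hence the adjoint structure map of~$C$ induces isomorphisms on homology for every inclusion $\sigma \subseteq \tau$, that is, it is a quasi-isomorphism, so $C$ is a homotopy sheaf.

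There is no genuine obstacle in this argument; the only point requiring (entirely routine) care is the naturality of the map~(\ref{eq:adj_str_map}) in the presheaf variable, which is immediate from the adjunction defining it.
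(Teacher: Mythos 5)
Your proof is correct and follows exactly the paper's (one-line) argument: the paper simply notes that a retract of a quasi-isomorphism is a quasi-isomorphism and leaves the rest implicit, while you fill in the routine naturality details exhibiting the adjoint structure map of~$C$ as a retract of that of~$D$. Nothing to add.
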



\begin{proposition}
  Let $\rho$ be a $1$-cone in~$\fan$.
  \begin{enumerate}
  \item[{\rm (1)}] The restriction functor $\varepsilon \colon \pre(\fan) \rTo
    \pre(\fan/\rho)$, defined in \S\ref{par:restriction}, is a left
    \textsc{Quillen} functor with respect to the $c$-structure
    (\ref{sec:c-structure}).
  \item[{\rm (2)}] The functor $\varepsilon$ preserves strict sheaves and $f$-cofibrant
    (\ref{sec:f-structure}) homotopy sheaves.
  \end{enumerate}
\end{proposition}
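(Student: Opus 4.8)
The plan is to prove the two parts more or less independently, using the explicit descriptions of $\varepsilon$, the $c$-structure, and the $f$-structure that have already been set up. For part~(1), the cleanest route is to exhibit a right adjoint for $\varepsilon$ and verify the Quillen condition on that adjunction. We already know from~\S\ref{par:restriction} that $\varepsilon$ is left adjoint to the extension-by-zero functor $\zeta \colon \pre(\fan/\rho) \rTo \pre(\fan)$, so the adjunction is in hand; it remains to show that $\varepsilon$ preserves $c$-cofibrations and $c$-acyclic cofibrations, or equivalently that $\zeta$ preserves $c$-fibrations and $c$-acyclic fibrations. Since in the $c$-structure (\ref{sec:c-structure}) fibrations are exactly the objectwise surjections and all objects are $c$-fibrant, I would argue directly that $\zeta$ takes objectwise surjections to objectwise surjections: for $\sigma \in \st(\rho)$ the map $\zeta(f)^\sigma = f^\sigma$ is unchanged, and for $\sigma \not\supseteq \rho$ it is the zero map $0 \rTo 0$, hence surjective in either case. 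The same degreewise argument, combined with the fact that weak equivalences are objectwise quasi-isomorphisms, shows that $\zeta$ preserves $c$-acyclic fibrations. This gives that $\zeta$ is right \textsc{Quillen} for the $c$-structure, so its left adjoint $\varepsilon$ is left \textsc{Quillen}, which is~(1).

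For part~(2), the preservation of strict sheaves is a direct computation. Given a strict sheaf $C$ and an inclusion $\bar\tau \subseteq \bar\sigma$ in $\fan/\rho$, coming from $\tau \subseteq \sigma$ in $\st(\rho)$, I would compute the adjoint structure map~(\ref{eq:adj_str_map}) for $\varepsilon(C)$ by tensoring the strict-sheaf isomorphism $A^\tau \tensor_{A^\sigma} C^\sigma \iso C^\tau$ with $A^{\bar\tau}$ over $A^\tau$, then using the transitivity of tensor products and the compatibility of the surjections $A^\sigma \rTo A^{\bar\sigma}$ and $A^\tau \rTo A^{\bar\tau}$ with the restriction maps. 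Since tensoring an isomorphism yields an isomorphism, $\varepsilon(C)$ is again a strict sheaf. The argument is purely formal once the base-change squares are seen to commute.

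The genuinely substantive case is the preservation of $f$-cofibrant homotopy sheaves, and this is where I expect the main obstacle to lie. For a homotopy sheaf the defining maps are only quasi-isomorphisms, not isomorphisms, so tensoring with $A^{\bar\tau}$ over $A^\tau$ — a base change along the \emph{surjection}~(\ref{eq:surj_Asigma_Abarsigma}), not a flat localisation — need not preserve quasi-isomorphisms in general. This is exactly why the hypothesis of $f$-cofibrancy is imposed: $f$-cofibrant means objectwise cofibrant (\ref{sec:f-structure}), so each $C^\sigma$ is a cofibrant chain complex of $A^\sigma$-modules, and derived tensor product agrees with ordinary tensor product on cofibrant objects. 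The plan is therefore to show that for an $f$-cofibrant homotopy sheaf the functor $A^{\bar\tau} \tensor_{A^\tau} (-)$ computes the \emph{derived} tensor product on the relevant entries, so that it preserves the quasi-isomorphisms witnessing the homotopy-sheaf condition. Concretely, I would verify that $\varepsilon$ preserves objectwise cofibrancy (using that it is left \textsc{Quillen} for the $c$-structure together with the explicit latching-object description, so that $\varepsilon(C)^{\bar\sigma} = A^{\bar\sigma} \tensor_{A^\sigma} C^\sigma$ remains a cofibrant $A^{\bar\sigma}$-complex), and then run the base-change argument of the strict case with ``isomorphism'' replaced by ``quasi-isomorphism'', invoking the invariance of derived tensor products under quasi-isomorphism of cofibrant complexes. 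The delicate point to get right is checking that the identification $A^{\bar\tau} \tensor_{A^\tau}\bigl(A^\tau \tensor_{A^\sigma} C^\sigma\bigr) \iso A^{\bar\tau} \tensor_{A^\sigma} C^\sigma \iso A^{\bar\tau} \tensor_{A^{\bar\sigma}} \bigl(A^{\bar\sigma} \tensor_{A^\sigma} C^\sigma\bigr)$ is compatible with the homotopy-sheaf quasi-isomorphism, so that cofibrancy of $C^\sigma$ indeed forces the restricted adjoint structure map to be a quasi-isomorphism.
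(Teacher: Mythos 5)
Your proposal is correct and follows essentially the same route as the paper: part~(1) via the adjunction $(\varepsilon,\zeta)$ and the observation that $\zeta$ preserves (acyclic) $c$-fibrations, and part~(2) via the base-change square relating $A^{\bar\sigma}\tensor_{A^{\bar\tau}}\varepsilon(C)^{\bar\tau}$ to $A^{\bar\sigma}\tensor_{A^\sigma}(A^\sigma\tensor_{A^\tau}C^\tau)$, with $f$-cofibrancy used exactly as you say to make the base change along $A^\sigma\rTo A^{\bar\sigma}$ preserve the homotopy-sheaf quasi-isomorphisms (Brown's Lemma applied to a weak equivalence of cofibrant objects). No gaps.
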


\begin{proof}
  Part~(1) is true since the right adjoint~$\zeta$ of~$\varepsilon$
  clearly preserves fibrations and acyclic fibrations in the
  $c$-structure.

  For (2) suppose that $C \in \pre(\fan)$ is a strict sheaf. An
  inclusion of cones $\bar \sigma \subseteq \bar \tau$ in $\fan/\rho$
  corresponds to an inclusion of cones $\sigma \subseteq \tau$ in
  $\st(\rho)$. The commutative diagram
  \begin{diagram}
    A^\sigma & \rTo & A^{\bar \sigma} \\
    \uTo && \uTo \\
    A^\tau & \rTo & A^{\bar \tau}
  \end{diagram}
  then induces the top horizontal isomorphism in the following diagram:
  \begin{equation}
    \label{eq:diagram_sheaf_cond}
      \begin{diagram}
        A^{\bar\sigma} \tensor_{A^{\bar\tau}}
        \varepsilon(C)^{\bar\tau} &=& A^{\bar\sigma}
        \tensor_{A^{\bar\tau}} A^{\bar\tau} \tensor_{A^\tau} C^\tau &
        \rTo[l>=4em]^\iso & A^{\bar\sigma} \tensor_{A^\sigma}
        A^{\sigma} \tensor_{A^\tau}
        C^\tau \\
        \dTo &&&& \dTo>\iso \\
        \varepsilon(C)^{\bar\sigma} && \rTo_= && A^{\bar\sigma}
        \tensor_{A^\sigma} C^\sigma
      \end{diagram}
  \end{equation}
  The right vertical map is an isomorphism as $C$ is a strict sheaf.
  Hence the left vertical map is an isomorphism as well, which proves
  that $\varepsilon(C)$ is a strict sheaf as claimed.

  Now suppose that $C$ is an $f$-cofibrant homotopy sheaf. We want to
  prove that $\varepsilon (C)$ is an $f$-cofibrant homotopy sheaf as
  well. Fix $\sigma \in st(\rho)$. Since $C$ is $f$-cofibrant we know
  that $C^\sigma$ is cofibrant in the category of $A^\sigma$-module
  chain complexes. Hence $\varepsilon (C)^{\bar \sigma} = A^{\bar
    \sigma} \tensor_{A^\sigma} C^\sigma$ is cofibrant in the category
  of $A^{\bar \sigma}$-module chain complexes. As this is true for all
  $\sigma \in \st(\rho)$ we know that $\varepsilon(C)$ is
  $f$-cofibrant. We are left to check that for all $\sigma \subseteq
  \tau$ in $\st(\rho)$ the left vertical map in the
  diagram~(\ref{eq:diagram_sheaf_cond}) is a weak equivalence. By
  hypothesis, the map $A^\sigma \tensor_{A^\tau} C^\tau \rTo C^\sigma$
  is a weak equivalence of cofibrant objects. Hence the right vertical
  map of diagram~(\ref{eq:diagram_sheaf_cond}), obtained by base
  change, is a weak equivalence as well, proving the assertion.
\end{proof}

\subsection{Colocal objects and colocal equivalences}

\begin{notation}
  For $\vec k \in \bZ^r$ and $\ell\in \bZ$ we let $\mathcal{O}(\vec
  k)[\ell]$, cf.~\S\ref{def:twisting_sheaf}, denote the sheaf
  $\mathcal{O}(\vec k)$ considered as a chain complex concentrated in
  chain degree~$\ell$. We denote by $\hco(\vec k)$ the
  $c$-cofibrant replacement $\hco(\vec k) \rFib^\sim
  \mathcal{O}(\vec k)$ with source consisting of bounded chain complexes of
  finitely generated free modules;
  more specifically, we use a mapping cylinder
  factorisation construction dual to the canonical path space
  factorisation discussed earlier. Note that
  $\hco(\vec k)[\ell] \rTo \mathcal{O}(\vec k)[\ell]$ then is a
  $c$-cofibrant replacement as well with source a strict sheaf in the
  sense of Definition~\ref{def:sheaves}.

  \medbreak

  For a given chain complex~$M$ of $A$-bimodules, we define the
  presheaf
  \[M \tensor \mathcal{O}(\vec k)[\ell] \colon \quad \sigma
  \mapsto M \tensor_A \mathcal{O}(\vec k)[\ell]^\sigma\ ,\]
  and similarly for $\hco(\vec k)[\ell]$. The resulting presheaves are
  in fact strict sheaves as is easily checked by inspection.
\end{notation}

\begin{definition}
  \label{def:colocal_equivalence_one}
  A map $f \colon C \rTo D$ in $\pre(\fan)$ is called an {\it
    $\hco(\vec k)[\ell]$-colocal equivalence\/},
  cf.~\cite[Definition~3.1.8~(1)]{Hirsch-Loc}, if the induced map
  \[\hom_{\pre(\fan)}(NA[\Delta^\bullet] \tensor \hco(\vec k)[\ell]
  ,\, C) \rTo \hom_{\pre(\fan)}(NA[\Delta^\bullet] \tensor \hco(\vec
  k)[\ell] ,\, D)\]
  is a weak homotopy equivalence of simplicial sets.  Here
  $NA[\Delta^\bullet]$ is the cosimplicial $A$-bimodule chain complex
  $n \mapsto NA[\Delta^n]$ with $N$ the reduced chain complex functor.
\end{definition}

\begin{proposition}
  \label{prop:holim_detects_colocal}
  Fix $\ell \in \bZ$ and $\vec k \in \bZ^r$. A map $f \colon C \rTo D$
  of objects in $\pre(\fan)$ is an $\hco(\vec k)[\ell]$-colocal
  equivalence if and only if the corresponding map of $A$-module chain complexes
  \[\holim\, C(-\vec k) \rTo \holim\, D(-\vec k)\]
  induces isomorphisms on homology in degrees $\geq \ell$.
\end{proposition}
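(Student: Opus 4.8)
The plan is to reduce the statement to the \textsc{Dold}-\textsc{Kan} correspondence: first rewrite the simplicial mapping set of Definition~\ref{def:colocal_equivalence_one} as the functor~$W$ applied to an internal hom complex, and then identify that complex, up to quasi-isomorphism, with a shift of $\holim C(-\vec k)$. For the first step, for each~$n$ the tensor-hom adjunction supplies a natural isomorphism
\[\hom_{\pre(\fan)}\big(NA[\Delta^n] \tensor \hco(\vec k)[\ell],\, C\big) \iso \hom_{\mathrm{Ch}_A}\big(NA[\Delta^n],\, \underline{\hom}_{\pre(\fan)}(\hco(\vec k)[\ell],\, C)\big),\]
where $\underline{\hom}_{\pre(\fan)}(\hco(\vec k)[\ell], C)$ is the internal hom chain complex of $A$-modules (the end over $\sigma \in \fan$ of the $A^\sigma$-linear hom complexes). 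Stripping off the $A$-bimodule factor $M = NA[\Delta^n]$ is legitimate because $A^\sigma$ acts only through the $\hco(\vec k)$-variable. Comparing with the defining formula $W(X)_n = \hom_{\mathrm{Ch}_A}(N(A[\Delta^n]), X)$, this says that the simplicial set of Definition~\ref{def:colocal_equivalence_one} is naturally isomorphic to $W\big(\underline{\hom}_{\pre(\fan)}(\hco(\vec k)[\ell], C)\big)$; since hom-sets are abelian groups, these are in fact simplicial modules. Hence, by Lemma~\ref{lem:dold_kan_adjunction}~(3), $f$ is an $\hco(\vec k)[\ell]$-colocal equivalence if and only if it induces an $H_n$-isomorphism on internal hom complexes for all $n \geq 0$.

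The second step is to identify $\underline{\hom}_{\pre(\fan)}(\hco(\vec k), C)$ with $\holim C(-\vec k)$ up to quasi-isomorphism, naturally in~$C$. Lemma~\ref{lem:hom_lim_iso} gives, on the underived level, the isomorphism $\underline{\hom}_{\pre(\fan)}(\mathcal{O}(\vec k), C) \iso \lim C(-\vec k)$; as $\mathcal{O}(\vec k)^\sigma$ is $A^\sigma$-free of rank~$1$, this passes to internal hom complexes. The functor $\lim\big((-)(-\vec k)\big) = \hom_{\pre(\fan)}(\mathcal{O}(\vec k), -)$ is right \textsc{Quillen} for the $f$-structure, being the composite of the twisting self-equivalence $(-)(-\vec k)$, right \textsc{Quillen} by Corollary~\ref{cor:twist_is_Quillen}~(2), with the right \textsc{Quillen} functor $\lim$ of Lemma~\ref{lem:holim_constant}; as the canonical fibrant replacement~$P$ realises its right derived functor (Definition~\ref{def:homotopy-limit}), one has $R\lim\big(C(-\vec k)\big) \simeq \holim C(-\vec k)$. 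On the other hand, since $\hco(\vec k)[\ell]$ is $c$-cofibrant, every object is $c$-fibrant (\ref{sec:c-structure}), and $NA[\Delta^\bullet] \tensor \hco(\vec k)[\ell]$ is a cosimplicial resolution in the $c$-structure (as in Lemmas~\ref{lem:cosimiplicial_resolution_ch_a} and~\ref{lem:cosimplicial_resolution_pre_fan}), the mapping space of the first step is a genuine homotopy function complex and so computes the same total derived functor $R\,\hom_{\pre(\fan)}(\mathcal{O}(\vec k), -)$. By uniqueness of total derived functors the two computations agree, yielding a natural quasi-isomorphism $\underline{\hom}_{\pre(\fan)}(\hco(\vec k), C) \simeq \holim C(-\vec k)$.

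Combining the two steps and tracking the suspension gives $H_n\big(\underline{\hom}_{\pre(\fan)}(\hco(\vec k)[\ell], C)\big) \iso H_{n+\ell}\big(\holim C(-\vec k)\big)$, so that $f$ is an $\hco(\vec k)[\ell]$-colocal equivalence if and only if $\holim C(-\vec k) \to \holim D(-\vec k)$ is an $H_{n+\ell}$-isomorphism for every $n \geq 0$, i.e. an isomorphism on homology in all degrees~$\geq \ell$. It is exactly the truncation built into~$W$ --- the fact from Lemma~\ref{lem:dold_kan_adjunction}~(3) that $W$ detects only non-negative homology --- that converts the bound $n \geq 0$ into the bound ``degrees $\geq \ell$'' after the degree shift by~$\ell$.

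The main obstacle is the second step: reconciling the two model structures. Colocality is phrased through a $c$-cofibrant replacement of the corepresenting sheaf~$\mathcal{O}(\vec k)$, whereas $\holim$ is defined through the $f$-fibrant (matching-surjective) replacement~$P$. The strict internal hom $\underline{\hom}_{\pre(\fan)}(\hco(\vec k), C)$ is an end and is not obviously homotopy invariant in~$C$; the real content is to show that, once passed to the homotopy function complex, it computes the same total derived functor of $\hom_{\pre(\fan)}(\mathcal{O}(\vec k), -)$ as the $f$-structure does. Throughout, the naturality in~$C$ of every identification must be verified, since this is what makes the comparison compatible with the given map~$f$.
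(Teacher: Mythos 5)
Your overall strategy is the same as the paper's: pass through the tensor--hom adjunction and the \textsc{Dold}--\textsc{Kan} functor~$W$, use Lemma~\ref{lem:hom_lim_iso} to identify the internal hom out of $\mathcal{O}(\vec k)[\ell]$ with $\lim(-)(-\vec k)[-\ell]$, and let the truncation in~$W$ produce the bound ``degrees $\geq\ell$''. The degree bookkeeping at the end is correct. However, there is a genuine gap at precisely the point you yourself flag as ``the real content'': the identification of $\underline{\hom}_{\pre(\fan)}(\hco(\vec k)[\ell], C)$ with $\holim C(-\vec k)[-\ell]$ is asserted via ``uniqueness of total derived functors'' but never proved. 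The two sides live in different model structures --- the left is a strict end with a $c$-cofibrant source and an arbitrary (hence $c$-fibrant) target, the right is the $f$-structure's derived limit --- and to invoke uniqueness you would first need to know (i) that the strict internal hom out of a $c$-cofibrant object into a $c$-fibrant object already computes a derived functor, which requires an enriched/Quillen-bifunctor property of $\underline{\hom}$ for the $c$-structure that neither you nor the paper establishes, and (ii) that the derived functors taken in the $c$- and $f$-structures agree, which does not follow formally from ``same weak equivalences'' without an argument.

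The paper supplies exactly the missing comparison by an explicit zig-zag (Fig.~\ref{fig:diag1}). First it replaces $C$ by its canonical $f$-fibrant replacement $PC$ in the \emph{target} variable, using \cite[Corollary~16.5.5~(2)]{Hirsch-Loc} in the $c$-structure (legitimate since every object is $c$-fibrant and $NA[\Delta^\bullet]\tensor\hco(\vec k)[\ell]$ is a $c$-resolution). Only then, with $PC$ and $PD$ now $f$-fibrant, does it swap $\hco(\vec k)[\ell]$ for $\mathcal{O}(\vec k)[\ell]$ in the \emph{source} variable, using \cite[Corollary~16.5.5~(1)]{Hirsch-Loc} in the $f$-structure, where both cosimplicial objects are resolutions and the map between them is a Reedy weak equivalence. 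After that swap the adjunction is applied to the degreewise-simple object $\mathcal{O}(\vec k)[\ell]$, so Lemma~\ref{lem:hom_lim_iso} gives $\lim (PC)(-\vec k)[-\ell]$ on the nose; a final step (your ``twisting is right Quillen'' observation, which you do have) identifies $\lim (PC)(-\vec k)$ with $\holim C(-\vec k)$ because $(PC)(-\vec k)$ and $P(C(-\vec k))$ are weakly equivalent $f$-fibrant replacements. Note also that you apply the adjunction to $\hco(\vec k)[\ell]$ \emph{before} this reduction, so your internal hom is a genuine totalisation over a bounded complex of free modules rather than a single shift of $\lim C(-\vec k)$; that ordering is what forces you into the unproved comparison. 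Reordering the steps as in the paper removes the difficulty.
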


\begin{proof}
  Let $C \rTo^\sim PC$ denote the canonical $f$-fibrant replacement
  for~$C$, cf.~\ref{canonical-f-replacement}, and recall that $\holim\,
  C = \lim PC$. Similarly, we have a weak equivalence $D \rTo^\sim
  PD$. The map $f$ induces a corresponding map $\tilde f \colon PC
  \rTo PD$.  Consider the huge diagram of Fig.~\ref{fig:diag1}.
  \begin{figure}[ht]
    \small
    \begin{diagram}[labelstyle=\scriptstyle]
      \hom_{\pre(\fan)} \big(NA [\Delta^\bullet] \tensor \hco(\vec k)[\ell], \, C\big) & \rTo^{f_*}
      & \hom_{\pre(\fan)} \big(NA [\Delta^\bullet] \tensor \hco(\vec k)[\ell], \, D\big) \\
      \dTo<\sim & 1 & \dTo>\sim \\
      \hom_{\pre(\fan)} \big(NA [\Delta^\bullet] \tensor \hco(\vec k)[\ell], \, PC\big) & \rTo
      ^{\tilde f_*}
      & \hom_{\pre(\fan)} \big(NA [\Delta^\bullet] \tensor \hco(\vec k)[\ell], \, PD\big) \\
      \uTo<\sim & 2 & \uTo>\sim \\
      \hom_{\pre(\fan)} \big(NA [\Delta^\bullet] \tensor \co(\vec k)[\ell], \, PC\big) & \rTo
      ^{\tilde f_*}
      & \hom_{\pre(\fan)} \big(NA [\Delta^\bullet] \tensor \co(\vec k)[\ell], \, PD\big) \\
      \dTo<\iso & 3 & \dTo>\iso \\
      {\begin{diagram}[small] \hom_{Ch_A} \Big(NA [\Delta^\bullet], {\hbox to 4em{\hfil}} \\
              {\hbox to 6em{\hfil}} \underline{\hom} (\co(\vec k)[\ell], \, PC)
              \Big) \end{diagram}} &
      \rTo ^{\tilde f_*}
      & {\begin{diagram}[small] \hom_{Ch_A} \Big(NA [\Delta^\bullet], {\hbox to 4em{\hfil}} \\
                {\hbox to 6em{\hfil}} \underline{\hom} (\co(\vec
                k)[\ell], \, PD)\Big)\end{diagram}} \\
      \dTo<\iso & 4 & \dTo>\iso \\
      {\begin{diagram}[small] \hom_{Ch_A} \Big(NA [\Delta^\bullet], {\hbox to
          4em{\hfil}} \\
          {\hbox to 6em{\hfil}} \lim (PC)(-\vec
          k)[-\ell]\Big)\end{diagram}} &
            \rTo^{\tilde f_*}
      & {\begin{diagram}[small] \hom_{Ch_A} \Big(NA [\Delta^\bullet], {\hbox to
            4em{\hfil}} \\
            {\hbox to 6em{\hfil}} \lim (PD)(-\vec k)[-\ell]\Big)\end{diagram}} \\
      \dTo<\sim & 5 & \dTo>\sim \\
      {\begin{diagram}[small] \hom_{Ch_A} \Big(NA [\Delta^\bullet], {\hbox to
          4em{\hfil}} \\
          {\hbox to 6em{\hfil}} \holim\, C(-\vec k)[-\ell]\Big)\end{diagram}} &
            \rTo^{\tilde f_*}
      & {\begin{diagram}[small] \hom_{Ch_A} \Big(NA [\Delta^\bullet], {\hbox to
            4em{\hfil}} \\
            {\hbox to 6em{\hfil}} \holim\, D(-\vec k)[-\ell]\Big)\end{diagram}} \\
      \dTo<= & 6 & \dTo>= \\
      W (\holim\, C(-\vec k)[-\ell]) & \rTo^{W(\holim f(-\vec k))} & W (\holim\, D(-\vec k)[-\ell])
    \end{diagram}
    \caption{Diagram}
    \label{fig:diag1}
  \end{figure}
  We claim that the vertical maps are weak equivalences or
  isomorphisms of simplicial sets as marked. We list the reasons for
  each of the squares:

  \smallskip \noindent {\it Square~1:\/} We know that $NA
  [\Delta^\bullet] \tensor \hco(\vec k)$ is a cosimplicial resolution
  of $\hco(k)$ with respect to the $c$-structure of~$\pre(\fan)$, and
  that $C$, $PC$, $D$ and~$PD$ are $c$-fibrant. It follows from
  \cite[Corollary~16.5.5~(2)]{Hirsch-Loc} that the vertical maps are
  weak equivalences.
  
  \smallskip \noindent {\it Square~2:\/} This follows immediately from
  \cite[Corollary~16.5.5~(1)]{Hirsch-Loc} since $PC$ and $PD$ are
  $f$-fibrant by construction, and since the map
  \[NA[\Delta^\bullet] \tensor \mathcal{O}(\vec k)[\ell] \rTo
  NA[\Delta^\bullet] \tensor \hco(\vec k)[\ell]\]
  is a \textsc{Reedy\/} weak equivalence of cosimplicial resolutions
  for the $f$-structure of~$\pre(\fan)$.

  \smallskip \noindent {\it Square~3:\/} Use adjointness of tensor
  product and hom complex for each entry of the diagrams involved.
  Note that $\co (\vec k)[\ell]$ is a chain-complex with non-trivial
  entries in degree~$\ell$ only.

  \smallskip \noindent {\it Square~4:\/} This uses the isomorphism of
  functors from Lemma~\ref{lem:hom_lim_iso}.

  \smallskip \noindent {\it Square~5:\/} Recall that $C \rTo PC$ is an
  $f$-fibrant replacement, hence so is its $(-\vec k)$th twist
  $C(-\vec k) \rTo (PC) (-\vec k)$ by
  Corollary~\ref{cor:twist_is_Quillen}.  But
  \[C(-\vec k) \rTo P(C(-\vec k))\]
  is another $f$-fibrant replacement, so we know that
  $(PC) (-\vec k)$ and $P(C(-\vec k))$ are weakly equivalent. Since
  both objects are $f$-fibrant they are fibrant as diagrams of
  $A$-module chain complexes. In particular, application of the
  inverse limit functor yields weakly equivalent chain complexes. The
  left vertical map then is known to be a weak equivalence by
  \cite[Corollary~16.5.5~(1)]{Hirsch-Loc}, applied to the category
  $\mathrm{Ch}_A$ with the projective model structure; for the target, note
  that $\lim P(C(-\vec k)) = \holim\, C(-\vec k)$ by definition of
  homotopy limits.---A similar argument applies to the right vertical
  map.

  \smallskip \noindent {\it Square~6:\/} This is just the definition
  of the \textsc{Dold}-\textsc{Kan} functor~$W$.

  \smallskip In particular, $f$ is an $\hco(\vec k)[\ell]$-colocal
  equivalence if and only if the top horizontal map $f_*$ is a weak
  equivalence if and only if $W(\holim f(-\vec k))$ is a weak
  equivalence if and only if $\,\holim f(-\vec k)[-\ell]$ is a
  quasi-isomorphism in non-negative degrees.
\end{proof}

\begin{definition}
  \label{def:colocal}
  Let $R \subseteq \bZ^{\fan(1)}$ be a non-empty subset.
  \begin{enumerate}
  \item[{\rm (1)}] A map $f \in \pre(\fan)$ is called an {\it $R$-colocal
      equivalence\/} if it is an $\hco (\vec k)[\ell]$-colocal
    equivalence in the sense of
    Definition~\ref{def:colocal_equivalence_one} for all $\vec k \in
    R$ and $\ell \in \bZ$. In other words, $f$ is an $R$-colocal
    equivalence if and only if it is a colocal equivalence in the
    sense of \cite[Definition~3.1.8~(1)]{Hirsch-Loc} with respect to
    the set $\hco (R) := \{ \hco (\vec k)[\ell] \,|\, \vec k \in R,\,
    \ell \in \bZ \}$.
  \item[{\rm (2)}] An object $B \in \pre(\fan)$ is called {\it $R$-colocal\/} if
    it is $\hco(R)$-colocal in the sense of
    \cite[Definition~3.1.8~(2)]{Hirsch-Loc} with respect to the
    $c$-structure of~$\pre(\fan)$; equivalently, if $B$ is
    $c$-cofibrant and $\hco(R)$-cellular
    \cite[Theorem~5.1.5]{Hirsch-Loc}.
  \end{enumerate}
  If the set~$R$ is understood we will drop it from the notation and
  simply speak of colocal equivalences and colocal objects.
\end{definition}

More explicitly, a map $f \colon C \rTo D$ in $\pre(\fan)$ is an
$R$-colocal equivalence if for all $\vec k \in R$ and all $\ell \in
\bZ$ the map
\[\hom_{\pre(\fan)} \big( NA [\Delta^\bullet] \tensor \hco (\vec
k)[\ell],\, C\big) \rTo^{f_*} \hom_{\pre(\fan)} \big( NA
[\Delta^\bullet] \tensor \hco(\vec k)[\ell],\, D\big)\]
is a weak equivalence of simplicial sets. The object $B \in
\pre(\fan)$ is $R$-colocal if it is $c$-cofibrant, and if for all
$R$-colocal maps $f \colon C \rTo D$ in $\pre(\fan)$ the map
\[\hom_{\pre(\fan)} \big( {\bf B}, C \big) \rTo \hom_{\pre(\fan)}
\big( {\bf B}, D \big)\]
is a weak equivalence of simplicial sets, where ${\bf B}$ denotes a
cosimplicial resolution \cite[Definition~16.1.20~(1)]{Hirsch-Loc}
of~$B$ with respect to the $c$-structure of~$\pre(\fan)$.

\begin{corollary}
  \label{cor:holim_detects_colocal}
  A map $f$ in $\pre(\fan)$ is an $\hco(\{\vec k\})$-colocal
  equivalence if and only if $\holim(f(-\vec k))$ is a quasi-isomorphism.
\end{corollary}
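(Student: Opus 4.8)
The plan is to deduce this directly from Proposition~\ref{prop:holim_detects_colocal} together with Definition~\ref{def:colocal}~(1), specialised to the one-element set $R = \{\vec k\}$. First I would unwind the definition: by Definition~\ref{def:colocal}~(1) a map $f$ is an $\hco(\{\vec k\})$-colocal equivalence precisely when it is an $\hco(\vec k)[\ell]$-colocal equivalence for \emph{every} $\ell \in \bZ$. This reduces the statement to understanding what happens as $\ell$ ranges over all integers.

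Next I would invoke Proposition~\ref{prop:holim_detects_colocal} for each fixed $\ell$: it asserts that $f$ is an $\hco(\vec k)[\ell]$-colocal equivalence if and only if the induced map $\holim f(-\vec k)$ induces isomorphisms on homology in all degrees $\geq \ell$. Combining this with the previous step, $f$ is an $\hco(\{\vec k\})$-colocal equivalence if and only if, for every $\ell \in \bZ$, the map $\holim f(-\vec k)$ induces an $H_n$-isomorphism for all $n \geq \ell$.

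The remaining step is a purely formal manipulation of the quantifier on~$\ell$. On the one hand, if $\holim f(-\vec k)$ is a quasi-isomorphism then it induces homology isomorphisms in every degree, hence in all degrees $\geq \ell$ for each~$\ell$. On the other hand, suppose the latter condition holds for every~$\ell$; given an arbitrary degree~$n$, we may take $\ell = n$ to conclude that $H_n$ is an isomorphism, and since $n$ was arbitrary, $\holim f(-\vec k)$ is a quasi-isomorphism. Thus the condition ``homology isomorphism in degrees $\geq \ell$ for all $\ell \in \bZ$'' is equivalent to ``quasi-isomorphism'', which yields the claim.

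There is no substantial obstacle here: the corollary is a direct specialisation of the Proposition to $R = \{\vec k\}$, and the only point requiring verification is that letting $\ell$ run through all of~$\bZ$ upgrades the family of bounded-below homology-isomorphism conditions to a single genuine quasi-isomorphism condition.
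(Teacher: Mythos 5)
Your proposal is correct and follows essentially the same route as the paper: specialise Definition~\ref{def:colocal}~(1) to $R=\{\vec k\}$, apply Proposition~\ref{prop:holim_detects_colocal} for each $\ell\in\bZ$, and observe that requiring homology isomorphisms in degrees $\geq\ell$ for every $\ell$ is the same as being a quasi-isomorphism. The paper phrases this last step via the \textsc{Dold}--\textsc{Kan} functor $W$ applied to all shifts $g[\ell]$, but the content is identical to your quantifier argument.
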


\begin{proof}
  This follows from Proposition~\ref{prop:holim_detects_colocal},
  together with the fact that a map $g$ of chain complexes is a weak
  equivalence if and only if $W(g[\ell])$ is a weak equivalence of
  simplicial sets for all $\ell \in \bZ$.
\end{proof}

\subsection{Colocally acyclic objects}

\begin{definition}
  \label{def:colocally_acyclic}
  Let $R \subseteq \bZ^{\fan(1)}$ be a non-empty subset.  An object $B
  \in \pre(\fan)$ is called {\it $R$-colocally acyclic\/} if the
  unique map $B \rTo 0$ is an $R$-colocal equivalence.  If the set~$R$
  is understood we will drop it from the notation and simply speak of
  colocally acyclic objects.
\end{definition}

\begin{notation}
  \begin{enumerate}
  \item[{\rm (1)}] For a cone $\sigma \in \fan$ let $\vec \sigma \in
    \bZ^{\fan(1)}$ denote the vector whose $\rho$-component is~$1$ if
    $\rho \subseteq \sigma$, and is~$0$ otherwise. Note that the
    zero-cone corresponds to the zero-vector.
  \item[{\rm (2)}] Similarly, we write $-\vec\sigma$ for the vector whose
    $\rho$-component is~$-1$ if $\rho \subseteq \sigma$, and is~$0$
    otherwise.
  \end{enumerate}
\end{notation}

\begin{construction}
  \label{constr:R_fan}
  To the regular fan~$\fan$ we associate a finite set $R_\fan \subset
  \bZ^{\fan(1)}$ as follows:
  \begin{enumerate}
  \item[{\rm (1)}] If $\fan$ has a unique inclusion-maximal cone (so $X_\fan$ is
    affine), we set $R_\fan := \{\vec 0\}$. This covers the unique fan
    in~$\bR^0$ as a special case.
  \item[{\rm (2)}] Suppose that $\fan$ does not have a unique inclusion-maximal
    cone. Let $\rho \in \fan(1)$ be a $1$-cone. We consider
    $\bZ^{(\fan/\rho) (1)}$ as a subset of $\bZ^{\fan(1)}$ in the
    following way: A $1$-cone $\bar\sigma \in \fan/\rho$ corresponds
    to a $2$-cone $\sigma \in \fan$ which contains exactly two
    $1$-cones: The cone $\rho$ and a cone $\tau \not=\rho$. We
    identify the $\bar\sigma$-component of $\bZ^{(\fan/\rho) (1)}$
    with the $\tau$-component of~$\bZ^{\fan(1)}$. All other components
    will be set to~$0$.---Using this identification, we set
    \[R_\fan := \bigcup_{\rho \in \fan(1)} R_{\fan/\rho} \ \cup \ 
    \bigcup_{\rho \in \fan(1)} \big(\vec\rho + R_{\fan/\rho} \big) \]
    where $\vec\rho + R_{\fan/\rho} = \{ \vec\rho + \vec k \,|\, \vec
    k \in R_{\fan/\rho}\}$. Note that $R_{\fan/\rho}$ is defined by
    induction on the dimension of~$\fan$.
  \end{enumerate}
\end{construction}

\begin{example}
  \label{example:complete_fan}
  If $\fan$ is complete then $R_\fan = \{\vec \sigma \,|\, \sigma \in
  \fan \}$.
\end{example}

\begin{proposition}
  \label{prop:null_implies_acyclic}
  If $C \in \pre(\fan)$ is an $R_\fan$-colocally acyclic $c$-cofibrant
  homotopy sheaf on~$X_\fan$, then $C \simeq 0$ in the $c$-structure
  (\ie, all complexes $C^\sigma$ are
  acyclic). See~\ref{constr:R_fan} for a definition
  of~$R_\fan$.
\end{proposition}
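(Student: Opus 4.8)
The plan is to argue by induction on the dimension $n$ of $\fan$, after first reformulating the hypothesis through Corollary~\ref{cor:holim_detects_colocal}: since $R_\fan$-colocal acyclicity of $C$ means $C \rTo 0$ is an $\hco(\vec k)[\ell]$-colocal equivalence for every $\vec k \in R_\fan$, it is equivalent to $\holim\, C(-\vec k) \simeq 0$ for all $\vec k \in R_\fan$. I will use throughout that $\vec 0 \in R_\fan$ (an easy induction on Construction~\ref{constr:R_fan}), so in particular $\holim\, C \simeq 0$. If $\fan$ has a unique maximal cone $\mu$ (case~(1), including $n=0$), then $R_\fan = \{\vec 0\}$ and Remark~\ref{remark:holim}(1) gives $C^\mu \simeq \holim\, C \simeq 0$; since every cone is a face of $\mu$ and $A^\sigma$ is then an exact localisation of $A^\mu$, the homotopy-sheaf property yields $C^\sigma \simeq A^\sigma \tensor_{A^\mu} C^\mu \simeq 0$ for all $\sigma$. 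This settles the base of the induction.

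For the inductive step assume $\fan$ has no unique maximal cone, so $R_\fan = \bigcup_\rho R_{\fan/\rho} \cup \bigcup_\rho (\vec\rho + R_{\fan/\rho})$. Fix a ray $\rho \in \fan(1)$ and a vector $\vec m \in R_{\fan/\rho}$, embedded in $\bZ^{\fan(1)}$ so that $m_\rho = 0$. Tensoring the short exact sequence of line bundles of Proposition~\ref{cofibre-of-inclusion} (with $\vec k = -\vec m$) objectwise with $C$ produces a short exact sequence of presheaves
\[0 \rTo C(-\vec m - \vec\rho) \rTo C(-\vec m) \rTo \zeta\big(\varepsilon(C(-\vec m))\big) \rTo 0 \ ;\]
exactness is preserved because $C$ is $c$-cofibrant, hence objectwise cofibrant, so each $C^\sigma$ is degreewise flat. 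Applying $\holim$, which is exact (it sends this sequence to a distinguished triangle) and using $\holim\, C(-\vec m) \simeq 0$ together with $\holim\, C(-\vec m-\vec\rho) = \holim\, C(-(\vec\rho + \vec m)) \simeq 0$ (both $\vec m$ and $\vec\rho + \vec m$ lie in $R_\fan$), I conclude $\holim_{\fan^\op}\zeta(\varepsilon(C(-\vec m))) \simeq 0$. By Lemma~\ref{lem:holim_of_extension} this equals $\holim_{(\fan/\rho)^\op}\varepsilon(C(-\vec m))$, and by the restriction-of-line-bundles isomorphism following Construction~\ref{con:restriction_of_bundle} one has $\varepsilon(C(-\vec m)) \iso \varepsilon(C)(-\vec\ell)$, where $\vec\ell \in R_{\fan/\rho}$ is the un-embedding of $\vec m$. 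As $\vec m$ ranges over $R_{\fan/\rho}$ so does $\vec\ell$, whence $\holim\, \varepsilon(C)(-\vec\ell) \simeq 0$ for all $\vec\ell \in R_{\fan/\rho}$; by Corollary~\ref{cor:holim_detects_colocal} applied to $\fan/\rho$, the restriction $\varepsilon(C)$ is $R_{\fan/\rho}$-colocally acyclic. It is $c$-cofibrant since $\varepsilon$ is left $c$-\textsc{Quillen}, and it is a homotopy sheaf: $c$-cofibrancy of $C$ forces objectwise cofibrancy (evaluate the cellular presentation of Lemma~\ref{lem:c_structure_is_cellular}, using that evaluation is a left adjoint), i.e.\ $f$-cofibrancy, and $\varepsilon$ preserves $f$-cofibrant homotopy sheaves. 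The inductive hypothesis for the $(n-1)$-dimensional fan $\fan/\rho$ then yields $\varepsilon(C) = C|_{V_\rho} \simeq 0$, for every ray $\rho$.

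Finally I exploit $C|_{V_\rho}\simeq 0$. Taking $\vec m = \vec 0$ in the same short exact sequence gives $0 \rTo C(-\vec\rho) \rTo C \rTo \zeta(C|_{V_\rho}) \rTo 0$; as $\zeta(C|_{V_\rho})$ is objectwise acyclic, the inclusion $C(-\vec\rho) \rTo C$ is an objectwise quasi-isomorphism for every $\rho$. At a cone $\sigma$ with $\rho \in \sigma(1)$ this inclusion is multiplication by the monomial $x_\rho$ on $C^\sigma$, so $x_\rho$ acts invertibly on $H_*(C^\sigma)$. Since $A^{\{0\}}$ arises from $A^\sigma$ by inverting exactly the $x_\rho$ for $\rho \in \sigma(1)$, the localisation map $C^\sigma \rTo C^{\{0\}}$ (a quasi-isomorphism onto $A^{\{0\}} \tensor_{A^\sigma} C^\sigma$ by the homotopy-sheaf property) is a quasi-isomorphism for every $\sigma$. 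These maps form a natural transformation $C \rTo \mathrm{con}(C^{\{0\}})$ that is an objectwise quasi-isomorphism, so by homotopy-invariance of $\holim$ and Lemma~\ref{lem:holim_constant}, $0 \simeq \holim\, C \simeq \holim\,\mathrm{con}(C^{\{0\}}) \simeq C^{\{0\}}$. Hence $C^\sigma \simeq C^{\{0\}} \simeq 0$ for all $\sigma$, as required.

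I expect the main obstacle to be the inductive descent in the second paragraph: matching the two families $R_{\fan/\rho}$ and $\vec\rho + R_{\fan/\rho}$ that constitute $R_\fan$ against the short exact sequence so that both outer homotopy limits vanish, and checking that $\varepsilon(C)$ inherits all three hypotheses needed over $\fan/\rho$. The supporting observation that $c$-cofibrant objects are objectwise cofibrant, which is exactly what lets the restriction functor preserve the homotopy-sheaf condition, is the key technical lemma underpinning that step.
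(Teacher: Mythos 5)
Your proposal is correct and follows essentially the same route as the paper's own proof: induction on dimension, the short exact sequence from Proposition~\ref{cofibre-of-inclusion} twisted by $C$, descent to $V_\rho$ via the induction hypothesis, and the final homotopy-limit comparison with the constant diagram. The only differences are cosmetic (you invert all the relevant monomials at once to map directly to $C^{\{0\}}$ rather than stepping through codimension-one faces) plus some extra care the paper leaves implicit, namely verifying that $\varepsilon(C)$ inherits $c$-cofibrancy and the homotopy-sheaf property.
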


\begin{proof}
  The statement is true if the fan $\fan$ contains a unique
  inclusion-maximal cone~$\mu$ (so $X_\fan = U_\mu$ is affine).
  Indeed, by Remark~\ref{remark:holim} we have a quasi-isomorphism
  $C^\mu \rTo \mathrm{holim}(C)$. If $C$ is $R_\fan$-colocally
  acyclic, then $\mathrm{holim}(C) \simeq 0$ (since $\vec 0 \in
  R_\fan$), hence $C^\mu \simeq 0$. Since $C$~is a homotopy sheaf,
  this implies that all its components $C^\tau \simeq A^\tau
  \tensor_{A^\mu} C^\mu$ are acyclic as well.---In particular, the
  Proposition is true for the unique fan in~$\bR^0$.

  \smallbreak  

  If $\fan$ does not contain a unique inclusion-maximal cone, we
  proceed by induction on the dimension.

  {\bf Induction hypothesis:\/} The theorem holds for objects of
  $\pre(\Delta)$ for all regular fans~$\Delta$ with $\dim \Delta <
  \dim \fan = n$.

  \smallbreak
  
  {\bf Step~1: The map $C (-\vec\rho) \rTo C(\vec 0) \iso C$ is a weak
    equivalence for each $\rho \in \fan(1)$.} Fix a $1$-cone $\rho \in
  \fan$, and fix $\vec k \in R_{\fan/\rho} \subset R_\fan$, the
  inclusion of sets as explained in
  Construction~\ref{constr:R_fan}~(2).  Then $\vec\rho + \vec k \in
  R_\fan$ by construction.

  The inclusion ${\cal O}\big(-(\vec\rho +
  \vec k)\big) \rTo\relax {\cal O} (-\vec k)$ induces a short exact sequence
  of objects in~$\pre(\fan)$
  \begin{equation}
    \label{eq:ses_ind}
    0 \rTo C \big(-(\vec\rho + \vec k)\big) \rTo^i C(-\vec k) \rTo Q
    (-\vec k) \rTo 0 \ .
  \end{equation}
  If $\sigma$~is a cone not containing~$\rho$ then all components of
  the vectors $\vec k$ and~$\vec\rho + \vec k$ corresponding to
  $1$-cones in~$\sigma(1)$ vanish. Hence the $\sigma$-component of the
  inclusion~$i$ is the identity, so that $Q(-\vec k)^\sigma = 0$ in
  this case.

  From the above sequence we obtain a short exact sequence of
  $A$-module chain complexes
  \[0 \rTo \holim\, C \big(-(\vec\rho + \vec k)\big) \rTo \holim\,
  C(-\vec k) \rTo \holim\, Q(-\vec k) \rTo 0 \ .\]
  Now since $C \rTo 0$ is an $R_\fan$-colocal equivalence by
  hypothesis, Corollary~\ref{cor:holim_detects_colocal} (applied to
  the vectors $\vec k$ and $\vec\rho +\vec k$ in~$R_\fan$) yields that
  \[\holim\, C\big(-(\vec\rho + \vec k)\big) \simeq 0 \simeq \holim\,
  C(-\vec k) \ .\]
  We conclude that $\holim\, Q(-\vec k) \simeq 0$ as well.

  From Proposition~\ref{cofibre-of-inclusion} it is easy to conclude
  that $Q(-\vec k) = \zeta \big(\varepsilon (C(-\vec k))\big)$ is
  nothing but the extension by zero of the restriction~$C(-\vec
  k)|_{V_\rho}$ of~$C (-\vec k)$ to $V_\rho = X_{\fan/\rho}$. Since
  twisting commutes with restriction, $Q(-\vec k)$ could equally be
  described as the extension by zero of the $(-\vec k)$th twist of the
  restriction~$C|_{V_\rho}$.

  \smallskip

  In other words, we have shown that for all $\vec k \in
  R_{\fan/\rho}$ the chain complex
  $\holim_{(\fan/\rho)^\op} C|_{V_\rho} (-\vec k)$ is acyclic
  where we have used Lemma~\ref{lem:holim_of_extension} to restrict to
  the smaller indexing category $\st(\rho)^\op$ in the homotopy limit.
  From Corollary~\ref{cor:holim_detects_colocal} we infer that the
  map $C|_{V_\rho} \rTo *$ in $\pre(\fan/\rho)$ is an
  $R_{\fan/\rho}$-colocal equivalence. But by the induction hypothesis
  we then know that $C|_{V_\rho} \simeq 0$. Since $Q(\vec 0) = \zeta
  (C|{V_\rho})$ this implies that $Q (\vec 0) \simeq 0$. From the
  short exact sequence~(\ref{eq:ses_ind}), applied to $\vec k = \vec 0
  \in R_{\fan/\rho}$ we then see that the map $C(-\vec\rho) \rTo C(\vec
  0) \iso C $ is a
  weak equivalence as claimed.

  \smallbreak

  {\bf Step~2: All the structure maps $C^\sigma \rTo C^\tau$ of~$C$ are
    quasi-iso\-mor\-phisms.} Let $\tau \subset \sigma$ be a codimension-$1$
  inclusion of cones in~$\fan$. Let $\rho$ denote the unique $1$-cone
  contained in~$\sigma \setminus \tau$. We want to identify the
  $\sigma$-component of the first map in the
  sequence~(\ref{eq:ses_ind}) for $\vec k = \vec 0$: By definition, it
  is the natural inclusion map
  \begin{equation}
    \label{eq:incl_map1}
    {\cal O}(-\vec\rho)^\sigma \tensor_{A^\sigma} C^\sigma \rTo\relax
    {\cal O}(\vec 0)^\sigma \tensor_{A^\sigma} C^\sigma \iso C^\sigma \ .
  \end{equation}
  Since $\fan$ is regular we can choose $f \in M$ such that $f$
  vanishes on the primitive generators of~$\tau$, and such that
  $f$~takes the value~$1$ on the primitive generator of~$\rho$. Then
  $f \in S_\sigma$, and there is an isomorphism of $A^\sigma$-modules
  ${\cal O} (\vec 0)^\sigma \rTo\relax {\cal O}(-\vec\rho)^\sigma$
  described by $b \mapsto b + f$ on elements of the canonical
  $A$-basis. We can thus rewrite the map~(\ref{eq:incl_map1}) up to
  isomorphism as
  $C^\sigma \rTo^{f} C^\sigma$.

  The module chain complex $A^\tau \tensor_{A^\sigma} C^\sigma$ is obtained
  from~$C^\sigma$ by inverting the action of the element~$f$
  (Lemma~\ref{lem:monoid_generators}), \ie, by forming the colimit of the
  sequence
  \[C^\sigma \rTo^f C^\sigma \rTo^f C^\sigma \rTo^f \ldots \ .\] Now
  $f$ acts by quasi-isomorphism on~$C^\sigma$ by the results of
  Step~1; indeed, as just seen above $f$ is the $\sigma$-component of
  the weak equivalence $C(-\vec\rho) \rTo C(\vec 0) \iso C$. Hence the
  canonical map $C^\sigma \rTo A^\tau \tensor_{A^\sigma} C^\sigma$ is
  a quasi-isomorphism. Since $C$~is a homotopy sheaf, the map $A^\tau
  \tensor_{A^\sigma} C^\sigma \rTo C^\tau$ is a quasi-isomorphism. The
  combination of these two statements shows that the structure map
  $C^\sigma \rTo C^\tau$ is a quasi-isomorphism.

  As any inclusion of cones in~$\fan$ can be written as a sequence of
  codimension\nobreakdash-$1$ inclusions, it follows that all
  structure maps of~$C$ are quasi-isomorphisms as claimed.

  \smallbreak

  {\bf Step~3: All entries of the diagram~$C$ are acyclic.}  Write
  $\mathrm{con}(B)$ for the constant $\fan^\mathrm{op}$-diagram with
  value~$B$. Fix a cone $\sigma \in \fan$. The structure maps of~$C$
  assemble to maps of diagram
  \[C \rTo \mathrm{con} (C^{\{0\}}) \lTo \mathrm{con}(C^\sigma) \ ;\]
  both these maps are weak equivalences of diagrams of
  $A$-module chain complexes by Step~2.  Application of the homotopy
  limit functor gives a chain of quasi-isomorphisms (we use
  Lemma~\ref{lem:holim_constant} in the last step)
  \[\holim\, C \rTo^\sim \holim\, \mathrm{con}
  (C^{\{0\}}) \lTo^\sim \holim\, \mathrm{con}(C^\sigma) \simeq
  C^\sigma \ .\]
  But since $\vec 0 \in R_\fan$ we know by
  Corollary~\ref{cor:holim_detects_colocal} that $\holim\, C \simeq
  0$, so $C^\sigma \simeq 0$ as required.
\end{proof}

\subsection{Homotopy sheaves as cofibrant objects}

\begin{proposition}[Colocal model structure of $\pre(\fan)$]
  \label{prop:colocal_model_structure}
  Let $R \subseteq \bZ^{\fan(1)}$. The category $\pre(\fan)$ has a
  model structure, called the {\em $R$-colocal model structure}, where
  a map~$f$ is a weak equivalence if and only if it is an $R$-colocal
  equivalence (Definition~\ref{def:colocal}), and a fibration if and
  only if it is a fibration in the $c$-structure
  of\/~$\pre(\fan)$. The model structure is right proper, and every
  object is fibrant.
\end{proposition}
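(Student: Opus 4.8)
The plan is to realise the asserted structure as the right \textsc{Bousfield}--\textsc{Hirschhorn} localisation (colocalisation) of the $c$-structure on $\pre(\fan)$ with respect to the set of objects $\hco(R) = \{ \hco(\vec k)[\ell] \mid \vec k \in R,\ \ell \in \bZ\}$, and then to invoke the general existence theorem for such localisations, \cite[Theorem~5.1.1]{Hirsch-Loc}. By design the weak equivalences produced by that theorem are exactly the $\hco(R)$-colocal equivalences, which by Definition~\ref{def:colocal}~(1) are precisely the $R$-colocal equivalences; and its fibrations are exactly the fibrations of the underlying model category, here the $c$-structure. Thus once the theorem applies, the two named classes of maps come out as claimed, and it will remain only to read off right properness and the fibrancy of every object.

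The existence theorem has two hypotheses: that the underlying model category be cellular and that it be right proper. Cellularity of the $c$-structure is Lemma~\ref{lem:c_structure_is_cellular}, which exhibits the generating (acyclic) cofibrations $I_c$ and~$J_c$ explicitly. For right properness I would use the observation recorded in~\ref{sec:c-structure} that every object of $\pre(\fan)$ is $c$-fibrant: a model category all of whose objects are fibrant is automatically right proper \cite[Corollary~13.1.3]{Hirsch-Loc}. (Alternatively one argues directly: pullbacks, fibrations and weak equivalences in the $c$-structure are all detected objectwise, which reduces the claim to right properness of the projective structure on $\mathrm{Ch}_A$, and the latter holds because all chain complexes are fibrant.) Finally, $\hco(R)$ is genuinely a set, being indexed by $R \times \bZ$.

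With these hypotheses verified, \cite[Theorem~5.1.1]{Hirsch-Loc} delivers the $R$-colocal model structure on $\pre(\fan)$ with weak equivalences and fibrations as stated, and the same theorem asserts that the localised structure is again right proper. It then remains only to see that every object is fibrant, and this is immediate: a right \textsc{Bousfield} localisation has the \emph{same} fibrations as its input model category, so an object is fibrant in the $R$-colocal structure if and only if it is $c$-fibrant --- and every object of $\pre(\fan)$ is $c$-fibrant. Since all the substantive technical input has been assembled beforehand, there is no serious obstacle in this proof; the one point that is not purely formal is the right properness of the $c$-structure, and this is settled at once by the fact that all objects are $c$-fibrant.
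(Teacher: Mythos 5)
Your proposal is correct and follows exactly the route the paper takes: the paper's entire proof is the citation of \cite[Theorem~5.1.1]{Hirsch-Loc} applied to the $c$-structure, and you have simply spelled out the verification of its hypotheses (cellularity via Lemma~\ref{lem:c_structure_is_cellular}, right properness from the fact that every object is $c$-fibrant) together with the standard consequences about fibrations and fibrancy. Nothing further is needed.
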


\begin{proof}
  This is \cite[Theorem~5.1.1]{Hirsch-Loc}, applied to the
  $c$-structure of~$\pre(\fan)$.
\end{proof}

\begin{theorem}
  \label{thm:homsheaf_colocal}
  Let $R_\fan \subset \bZ^{\fan(1)}$ denote the finite set specified in
  Construction~\ref{constr:R_fan}.
  \begin{enumerate}
    \item[{\rm (1)}] If $C$ is an $R_\fan$-colocal object of
        $\pre(\fan)$, then $C$~is a $c$-cofibrant homotopy sheaf.
    \item[{\rm (2)}] If $C$ is a $c$-cofibrant homotopy sheaf
    on~$X_\fan$, then $C$ is $R_\fan$-colocal.
  \end{enumerate}
\end{theorem}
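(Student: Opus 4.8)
For part~(1) the plan is to deduce both conditions from $C$ being $\hco(R_\fan)$-cellular. That $C$ is $c$-cofibrant is part of Definition~\ref{def:colocal}~(2). For the homotopy-sheaf property I would consider the class $\mathcal{H}$ of $c$-cofibrant homotopy sheaves and show it contains every cellular object. It contains each generator $\hco(\vec k)[\ell]$: the $c$-cofibrant replacement $\hco(\vec k) \rFib^\sim \co(\vec k)$ is a $c$-weak equivalence onto the strict (hence homotopy) sheaf $\co(\vec k)$, so $\hco(\vec k)$ is a homotopy sheaf by Lemma~\ref{lem:homotopy_invariant}, and the shift $[\ell]$ preserves this. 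The class $\mathcal{H}$ is moreover closed under the operations building cellular objects: under $c$-weak equivalences by Lemma~\ref{lem:homotopy_invariant}, under retracts by Lemma~\ref{lem:hom_sheaf_inv_retracts}, and under homotopy colimits. For the last point, a homotopy pushout of homotopy sheaves yields a short exact sequence whose remaining term is a homotopy sheaf by Lemma~\ref{lem:homotopy_sheaf_2_of_3}; coproducts and sequential colimits are handled by the exactness of each localisation $A^\sigma \tensor_{A^\tau}(-)$, its commutation with colimits, and the commutation of homology with filtered colimits and direct sums. As all homotopy colimits are assembled from these, $\mathcal{H}$ is closed under them, and since the $\hco(R_\fan)$-cellular objects form the smallest class of $c$-cofibrant objects with these closure properties containing the generators, $C \in \mathcal{H}$.

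For part~(2) the content lies in reducing to Proposition~\ref{prop:null_implies_acyclic}. Let $C$ be a $c$-cofibrant homotopy sheaf and pick a colocal cofibrant replacement $\Gamma C \rTo C$ in the model structure of Proposition~\ref{prop:colocal_model_structure}, so $\Gamma C$ is colocal and $\Gamma C \rTo C$ is an $R_\fan$-colocal equivalence; by part~(1), $\Gamma C$ is again a $c$-cofibrant homotopy sheaf. Factoring this map in the $c$-structure as $\Gamma C \rightarrowtail W \rFib^\sim C$ produces a $c$-cofibrant homotopy sheaf $W$ (Lemma~\ref{lem:homotopy_invariant}), and $\Gamma C \rTo W$ is an $R_\fan$-colocal equivalence by two-out-of-three, using that $c$-weak equivalences are colocal equivalences (immediate from Corollary~\ref{cor:holim_detects_colocal}, since $\holim$ preserves objectwise quasi-isomorphisms).

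The cofibre of $\Gamma C \rightarrowtail W$ then fits into a short exact sequence $0 \rTo \Gamma C \rTo W \rTo Q \rTo 0$ of $c$-cofibrant presheaves, and $Q$ is a homotopy sheaf by Lemma~\ref{lem:homotopy_sheaf_2_of_3}. For each $\vec k \in R_\fan$ the twist by $-\vec k$ is exact, so $\holim$ sends the twisted sequence to a triangle of chain complexes; since $\Gamma C \rTo W$ is a colocal equivalence, Corollary~\ref{cor:holim_detects_colocal} makes $\holim\,\Gamma C(-\vec k) \rTo \holim\, W(-\vec k)$ a quasi-isomorphism, forcing $\holim\, Q(-\vec k) \simeq 0$. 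Thus $Q$ is an $R_\fan$-colocally acyclic $c$-cofibrant homotopy sheaf, and Proposition~\ref{prop:null_implies_acyclic} gives $Q \simeq 0$ in the $c$-structure. Hence $\Gamma C \rTo W$, having objectwise acyclic cofibre, is a $c$-weak equivalence, and so is the composite $\Gamma C \rTo C$.

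Finally I would invoke invariance of the colocal property under $c$-weak equivalences between $c$-cofibrant objects: for any such equivalence $\Gamma C \rTo C$ and any object $Z$ (all objects being $c$-fibrant), the induced map $\mathrm{Map}(C,Z) \rTo \mathrm{Map}(\Gamma C, Z)$ of homotopy function complexes is a weak equivalence, so $\mathrm{Map}(C,-)$ and $\mathrm{Map}(\Gamma C,-)$ send precisely the same maps to weak equivalences; as $\Gamma C$ is colocal, so is $C$. The principal obstacle is part~(2), and within it the verification that the cofibre $Q$ satisfies every hypothesis of Proposition~\ref{prop:null_implies_acyclic}; the closing invariance step must be stated with care, since colocal objects are closed not under arbitrary weak equivalences but only under those between cofibrant objects.
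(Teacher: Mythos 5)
Your proposal is correct and follows essentially the same route as the paper: part~(1) via the cellular description of colocal objects together with the closure of homotopy sheaves under weak equivalences, retracts and the (homotopy) colimits building cell complexes (Lemmas~\ref{lem:homotopy_invariant}, \ref{lem:homotopy_sheaf_2_of_3}, \ref{lem:hom_sheaf_inv_retracts}), and part~(2) by reducing to Proposition~\ref{prop:null_implies_acyclic} applied to the third term of a short exact sequence relating $C$ to a colocal cofibrant replacement, then invoking invariance of colocality under weak equivalences of cofibrant objects. The only (harmless) deviations are that you apply Proposition~\ref{prop:null_implies_acyclic} to the cokernel of a $c$-cofibration obtained by an extra factorisation, where the paper uses the kernel of the replacement fibration and must therefore insert a $c$-cofibrant replacement of that kernel, and that in part~(1) you argue with the abstract closure characterisation of cellular objects rather than the explicit $J_c \cup \Lambda(R_\fan)$-cell-complex presentation.
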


\begin{proof} We consider the category $\pre(\fan)$ equipped with the
  $R_\fan$-colocal model structure of
  Proposition~\ref{prop:colocal_model_structure}.

  \smallbreak

  Part~(1) follows from the description of colocal objects in
  the general theory of right \textsc{Bousfield} localisation. We have
  to introduce some auxiliary notation and results first.

  Recall that the $c$-structure of~$\pre(\fan)$ has a set
  \[J_c = \{F_\tau (0 \rTo D_n (A)) \,|\, n \in \bZ,\, \tau \in \fan\}\]
  of generating cofibrations as specified in
  Lemma~\ref{lem:c_structure_is_cellular}. Since the chain complexes
  $D_n (A)$ are acyclic so are all the entries in the
  diagrams~$F_\tau(D_n(A))$. Consequently, all maps in~$J_c$ are
  injective maps of homotopy sheaves, and their cofibres are homotopy
  sheaves.

  The set
  \[\Lambda (R_\fan) := \{ L_n (NA[\Delta^\bullet]) \tensor
  \hco(\vec k) \rTo\relax NA[\Delta^n] \tensor \hco (\vec k)
  \ |\ \vec k \in R_\fan\}\]
  is a full set of horns on~$\hco (R_\fan)$ in the sense of
  \cite[Definition~5.2.1]{Hirsch-Loc}; here $N$~denotes the reduced
  chain complex functor as usual. This follows from the fact that
  $NA[\Delta^\bullet] \tensor \hco (\vec k)$ is a cosimplicial
  resolution of~$\hco(\vec k)$ by
  Lemma~\ref{lem:cosimplicial_resolution_pre_fan}. Note that
  $\Lambda(R_\fan)$ is a set of injective maps of homotopy sheaves;
  the cofibres are the objects
  \[N A[\Delta^n/\partial \Delta^n] \tensor \hco(\vec k)\ \quad n
  \geq 0,\ \vec k \in R_\fan\]
  which are homotopy sheaves as well.
  
  Now suppose that $C$~an $R_\fan$-colocal object
  of~$\pre(\fan)$. From \cite[Corollary~5.3.7]{Hirsch-Loc} we know
  that $C$ is a retract of a $c$-cofibrant object $X \in \pre(\fan)$
  which admits a weak equivalence $X \rTo^\sim Y$ to an object $Y \in
  \pre(\fan)$ which is a cell complex with respect to the maps in $J_c
  \cup \Lambda (R_\fan)$.  Since the cofibres of all the maps in this
  set are homotopy sheaves as observed above, it follows from
  (transfinite) induction on the number of cells in~$Y$ that the
  presheaf~$Y$ is a homotopy sheaf. The induction step works as
  follows: Suppose that $f \colon A \rTo B$ is an injective map of
  presheaves such that its cofibre~$B/A$ is a homotopy sheaf, and
  suppose that $Z$~is a homotopy sheaf. Then there is a short exact
  sequence in~$\pre(\fan)$
  \[0 \rTo Z \rTo Z \mathop{\cup}_A B \rTo B / A \rTo 0\]
  where $Z$ and~$B/A$ are homotopy sheaves. It follows from
  Lemma~\ref{lem:homotopy_sheaf_2_of_3} that $Z \cup_A B$ is a
  homotopy sheaf as well.

  Since $Y$ is a homotopy sheaf so is the presheaf~$X$
  by Lemma~\ref{lem:homotopy_invariant}; consequently, its retract~$C$
  is a homotopy sheaf as well
  (Lemma~\ref{lem:hom_sheaf_inv_retracts}).

  \smallbreak

  Part~(2): Let $\tilde Y \rFib^{\coweq} C$ be a cofibrant replacement
  with respect to the colocal model structure, constructed by
  factorising the map $0 \rTo C$ as a colocally acyclic cofibration
  followed by a $c$-fibration. Then $Y$ is $R$-colocal. We will show
  that the map $Y \rFib^\coweq C$ is a weak equivalence (in the
  $c$-structure); then $C$ is colocal as well by
  \cite[Proposition~3.2.2~(2)]{Hirsch-Loc}.
  
  The map $Y \rFib^\coweq C$ is a $c$-fibration, hence surjective. We
  thus have a short exact sequence of objects in $\pre(\fan)$
  \begin{equation}
    \label{eq:colocal_seq}
    0 \rTo \tilde K \rTo Y \rFib^\coweq C \rTo 0 \ .
  \end{equation}
  The map $\tilde K \rTo 0$ is the pullback of $Y \rFib^\coweq C$, so
  $\tilde K \rTo 0$ is a colocally acyclic fibration, hence $\tilde K$
  is colocally acyclic. By considering the long exact homology
  sequence associated to~(\ref{eq:colocal_seq}) we are reduced to
  showing $\tilde K \simeq 0$. Let $K \rFib^\sim \tilde K$ denote a
  $c$-cofibrant replacement. It is enough to prove that $K \simeq 0$.
  Note that $K$ is $R_\fan$-colocally acyclic since $\tilde K$ is so,
  and since every weak equivalence is a colocal equivalence
  \cite[Proposition~3.1.5]{Hirsch-Loc}.
  
  By hypothesis and part~(1), both $Y$ and~$C$ are homotopy sheaves.
  Hence $\tilde K$, being the kernel of a surjection $Y \rTo C$, is a
  homotopy sheaf as well by Lemma~\ref{lem:homotopy_sheaf_2_of_3}.
  Consequently, $K$ is a $c$-cofibrant homotopy sheaf which satisfies
  the hypotheses of Proposition~\ref{prop:null_implies_acyclic} which
  proves $K \simeq 0$ as required.
\end{proof}

\begin{corollary}
  \label{cor:weqs_of_homotopy_sheaves}
  Let $R_\fan \subset \bZ^{\fan(1)}$ denote the finite set specified
  in Construction~\ref{constr:R_fan}. Let $f \colon X \rTo Y$ be a map
  of homotopy sheaves. Then $f$~is a weak equivalence if and only if
  the induced map of chain complexes
  \[\holim f(-\vec k) \colon \holim X(-\vec k) \rTo \holim Y(-\vec
  k)\]
  is a quasi-isomorphism for all $\vec k \in R_\fan$.
\end{corollary}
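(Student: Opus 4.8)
The plan is to translate the homology condition into the language of colocalisation and then deduce the result from Proposition~\ref{prop:null_implies_acyclic}. By Definition~\ref{def:colocal} together with Corollary~\ref{cor:holim_detects_colocal}, requiring $\holim f(-\vec k)$ to be a quasi-isomorphism for every $\vec k \in R_\fan$ is precisely the assertion that $f$ is an $R_\fan$-colocal equivalence. One implication is then immediate: every weak equivalence is a colocal equivalence \cite[Proposition~3.1.5]{Hirsch-Loc}, so if $f$ is a weak equivalence then each $\holim f(-\vec k)$ is a quasi-isomorphism. It remains to prove the converse.

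So I would assume $f$ is an $R_\fan$-colocal equivalence and show that $f$ is an objectwise quasi-isomorphism. First I form the objectwise mapping cone $Z = \mathrm{Cone}(f) \in \pre(\fan)$, which sits in a short exact sequence $0 \rTo Y \rTo Z \rTo X[1] \rTo 0$ that is split in each chain degree. Since $X$ and $Y$ are homotopy sheaves, so is the shift $X[1]$, and hence so is $Z$ by the two-out-of-three property of Lemma~\ref{lem:homotopy_sheaf_2_of_3}. Twisting by $-\vec k$ is exact and commutes with the formation of mapping cones, and $\holim$ carries the resulting short exact sequence---whose second map is an objectwise surjection---to a distinguished triangle; consequently $\holim Z(-\vec k)$ is quasi-isomorphic to the cone of $\holim f(-\vec k)$. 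As the latter map is a quasi-isomorphism by hypothesis, $\holim Z(-\vec k) \simeq 0$ for all $\vec k \in R_\fan$, so $Z$ is $R_\fan$-colocally acyclic by Corollary~\ref{cor:holim_detects_colocal}.

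To be in a position to apply Proposition~\ref{prop:null_implies_acyclic}, I would pass to a $c$-cofibrant replacement $W \rFib^\sim Z$. By Lemma~\ref{lem:homotopy_invariant} the object $W$ is again a homotopy sheaf; it is $c$-cofibrant by construction; and it is $R_\fan$-colocally acyclic because colocal acyclicity is invariant under weak equivalences (again by \cite[Proposition~3.1.5]{Hirsch-Loc} together with two-out-of-three for colocal equivalences). Proposition~\ref{prop:null_implies_acyclic} then forces $W \simeq 0$ in the $c$-structure, \ie\ every component $W^\sigma$ is acyclic; since $W \rTo Z$ is an objectwise quasi-isomorphism, every $Z^\sigma = \mathrm{Cone}(f^\sigma)$ is acyclic, which says exactly that each $f^\sigma$ is a quasi-isomorphism. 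Hence $f$ is a weak equivalence.

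The main obstacle is the middle step: verifying that $\holim$ sends the mapping-cone short exact sequence to a triangle realising $\holim Z(-\vec k)$ as the cone of $\holim f(-\vec k)$. This is the same exactness property of the homotopy limit that is used, for a different short exact sequence, in the proof of Proposition~\ref{prop:null_implies_acyclic}, and it rests on the fact that $\holim = \lim \circ P$ converts objectwise fibrations of presheaves into fibrations and so preserves fibre sequences; I would check this carefully rather than treat it as formal. Everything else---identifying the colocal-equivalence condition, stability of the homotopy-sheaf property, and homotopy-invariance of colocal acyclicity---is routine.
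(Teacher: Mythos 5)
Your proof is correct, but it takes a genuinely different route from the paper's. The paper replaces $X$ and $Y$ by $c$-cofibrant approximations $X^c \rTo X$, $Y^c \rTo Y$, observes that $X^c$ and $Y^c$ are again homotopy sheaves (Lemma~\ref{lem:homotopy_invariant}) and hence $R_\fan$-colocal by Theorem~\ref{thm:homsheaf_colocal}~(2), and then invokes the general fact from right Bousfield localisation theory that a colocal equivalence between colocal objects is a weak equivalence \cite[Theorem~3.2.13~(2)]{Hirsch-Loc}; Corollary~\ref{cor:holim_detects_colocal} translates back and forth as in your first paragraph. You instead bypass Theorem~\ref{thm:homsheaf_colocal} and the colocalisation machinery entirely: you form the mapping cone $Z$ of $f$, show it is a colocally acyclic homotopy sheaf via Lemma~\ref{lem:homotopy_sheaf_2_of_3}, and feed a $c$-cofibrant replacement of it directly into Proposition~\ref{prop:null_implies_acyclic}. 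This is in effect an inlined, special case of the argument the paper uses to prove Theorem~\ref{thm:homsheaf_colocal}~(2) (there the kernel of a colocal cofibrant replacement plays the role of your cone), so both proofs ultimately rest on Proposition~\ref{prop:null_implies_acyclic}. What the paper's route buys is that it avoids the one step you rightly flag as non-formal, namely that $\holim$ carries the mapping-cone sequence to a triangle identifying $\holim Z(-\vec k)$ with the cone of $\holim f(-\vec k)$; that step is true (in chain complexes the cone is, up to shift, a homotopy fibre, and $\holim$ commutes with homotopy fibres), and the paper itself uses the analogous exactness of $\holim$ on a short exact sequence inside the proof of Proposition~\ref{prop:null_implies_acyclic}, but it does require the justification you promise. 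What your route buys is independence from \cite[Theorem~3.2.13~(2)]{Hirsch-Loc} and from the characterisation of colocal objects, at the cost of that extra verification.
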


\begin{proof}
  By $c$-cofibrant approximation and lifting, we can construct a
  commutative square
  \begin{equation}
    \label{eq:square_colocal}
    \begin{diagram}
      X^c & \rTo^{f^c} & Y^c \\
      \dTo<\sim && \dTo>\sim \\
      X & \rTo_f & Y
    \end{diagram}
  \end{equation}
  where both vertical maps are weak equivalences, and with
  $c$-cofibrant presheaves $X^c$ and~$Y^c$. Then $f$~is a weak
  equivalence if and only if $f^c$~is. Now $f^c$ is a map of
  $R_\fan$-colocal objects by Lemma~\ref{lem:homotopy_invariant} and
  Theorem~\ref{thm:homsheaf_colocal}. Hence $f^c$~is a weak
  equivalence if and only if $f^c$~is an $R_\fan$-colocal map
  \cite[Theorem~3.2.13~(2)]{Hirsch-Loc}. By
  Corollary~\ref{cor:holim_detects_colocal} this is equivalent to
  saying that the map $\holim f^c (-\vec k)$ is a quasi-isomorphism
  for all $\vec k \in R_\fan$. However, since $\holim$ and twisting
  both preserve weak equivalences, this is equivalent, in view of
  diagram~(\ref{eq:square_colocal}) above, to the condition that
  $\holim f(-\vec k)$ is a quasi-isomorphism for all $\vec k \in
  R_\fan$.
\end{proof}

\section{The derived category}

Our next goal is to prove that for a large class of schemes the
(unbounded) derived category of quasi-coherent sheaves can be obtained
as the homotopy category of homotopy sheaves.

The material in this section will apply to any regular toric
scheme~$X$ defined over a commutative ring~$A$; more generally, it
will be enough to assume that $X$ is a scheme equipped with a finite
semi-separating cover \cite[\S B.7]{TT-KofSchemes} as specified in
Definition~\ref{def:fan-coverings} below. Then the categories of chain
complexes of quasi-coherent sheaves on~$U_\sigma$ and $X$,
respectively, admit the {\it injective model structure\/} with
cofibrations the levelwise injective maps, and the categories of chain
complexes of quasi-coherent sheaves on~$U_\sigma$ admit the {\it
  projective model structure\/} with fibrations the levelwise
surjective maps. Finally, all the inclusions $U_\sigma \subseteq X$
are affine maps and hence induce exact push-forward functors.

\subsection{Coverings indexed by a fan}

\begin{definition}
  \label{def:fan-coverings}
  Let $A$ be a commutative ring, let $\fan$ be a finite fan in
  $N_\bR$, and let $X$ be an $A$-scheme.  A collection
  $(U_\sigma)_{\sigma \in \fan}$ of open subschemes of~$X$ is called a
  {\it $\fan$-covering\/} if $\bigcup_{\sigma \in \fan} U_\sigma = X$,
  and if for all $\tau, \sigma \in \fan$ we have $U_\tau \cap U_\sigma
  = U_{\tau \cap \sigma}$. If all the $U_\sigma$ are affine, we call
  $(U_\sigma)_{\sigma \in \fan}$ an {\it affine $\fan$-covering}.
\end{definition}

If the $A$-scheme~$X$ admits an affine $\fan$-covering, for some
finite fan~$\fan$, then $X$ is necessarily quasi-compact and
semi-separated \cite[\S B.7]{TT-KofSchemes}, hence in particular
quasi-separated. These facts are relevant as they guarantee the
existence of certain model category structures,
cf.~\S\ref{subsec:model_structures_sheaves}.

\begin{example}
  Every quasi-compact separated scheme~$X$ admits an affine
  $\fan$-covering for some fan~$\fan$. Indeed, let $U_0, U_1, \ldots,
  U_n$ be an open affine cover of~$X$. Let $\fan$ denote the usual fan
  of $n$-dimensional projective space, described as follows. Let $e_1,
  e_2, \ldots, e_n$ denote the unit vectors of~$\bR^n$, set $e_0 =
  -e_1-e_2- \ldots -e_n$, and define $M:= \{0,\, 1,\, \ldots,\,
  n\}$. Then $\fan$ is the collection of cones
  \[\sigma_E = \mathrm{cone} \Big(\{ e_i \,|\, i \in E\}\Big) \subset
  \bR^n\]
  for {\it proper\/} subsets $E \subset M$. Given such a set~$E$
  define $U_{\sigma_E} := \bigcap_{i \in M \setminus E} U_i$; these
  intersections are affine since $X$ is separated. Then
  $(U_\sigma)_{\sigma \in \fan}$ is an affine $\fan$-covering of~$X$
  by construction.

  More generally, if $X$ is quasi-compact, and the sets $U_0, U_1,
  \ldots, U_n$ form a semi-separating covering of~$X$, the above
  construction provides an affine $\fan$-covering for~$X$.
\end{example}

\subsection{Sheaves and homotopy sheaves}

From now on we will assume that $A$ is a commutative ring, that $\fan$
is a finite fan in~$N_\bR$, and that $X$ is an $A$-scheme equipped
with an affine $\fan$-covering $(U_\sigma)_{\sigma \in \fan}$
(Definition~\ref{def:fan-coverings}); {\it a fortifiori}, $X$ is
quasi-compact and semi-separated.

For any open subscheme $Y \subseteq X$ we write $\qco (Y)$ for the
category of quasi-coherent sheaves of $\mathcal{O}_Y$-modules, and
$\mathrm{Ch}\, \qco (Y)$ for the category of (possibly unbounded)
chain complexes in~$\qco(Y)$.---In what follows we will consider a
presheaf to have values in the categories $\mathrm{Ch}\, \qco
(U_\sigma)$ rather than in chain complexes of modules:

\begin{definition}
  The category $\varpre (\fan)$ of presheaves on~$X$ is the category
  of $\fan^\mathrm{op}$-diagrams $C$ which assign to each $\sigma \in
  \fan$ an object $C^\sigma \in \mathrm{Ch}\, \qco (U_\sigma)$, and to
  each inclusion $\tau \subseteq \sigma$ in $\fan$ a map
  $C^\sigma|_{U_\tau} \rTo C^\tau$, which is the identity for $\tau =
  \sigma$, subject to the condition that for $\nu \subseteq \tau
  \subseteq \sigma$ in~$\fan$ the composition
  \[C^\sigma|_{U_\nu} = \Big(C^\sigma|_{U_\tau}\Big)|_{U_\nu} \rTo
  (C^\tau)|_{U_\nu} \rTo C^\nu\]
  coincides with the structure map corresponding to the inclusion $\nu
  \subseteq \sigma$.
\end{definition}

The category $\varpre(\fan)$ is another example of a twisted
diagram category in the sense of~\cite[\S2.2]{HR-Twisted}, formed with
respect to the adjunction bundle
\[\fan^\op \rTo \mathrm{Cat}, \quad \sigma \mapsto \mathrm{Ch} \, \qco
(U_\sigma)\]
and structural adjunctions given by restriction (the left adjoints)
and push-forward along inclusions. We can thus appeal to the general
machinery of twisted diagrams again to equip $\varpre(\fan)$ with
various model structures.

\medbreak

We define the notions of strict and homotopy sheaves for $\varpre
(\fan)$ in analogy to Definition~\ref{def:preheaves}:

\begin{definition}
  \label{def:var_homotopy_sheaf}
  Given an object $C \in \varpre (\fan)$ we call $C$ a {\it strict
    sheaf\/} if for all inclusions $\tau \subseteq \sigma$ in~$\fan$
  the structure map $C^\sigma|_{U_\tau} \rTo C^\tau$ is an
  isomorphism; we call~$C$ a {\it homotopy sheaf\/} if for all
  inclusions $\tau \subseteq \sigma$ in~$\fan$ the structure map
  $C^\sigma|_{U_\tau} \rTo C^\tau$ is a quasi-isomorphism.
\end{definition}

\begin{remark}
  \label{rem:hom_invariant_alt}
  Since restriction to the open subset $U_\sigma$ is an exact functor,
  Lemmas~\ref{lem:homotopy_invariant}, \ref{lem:homotopy_sheaf_2_of_3}
  and~\ref{lem:hom_sheaf_inv_retracts} apply {\it mutatis mutandis}.
  That is, if $f \colon C \rTo D$ is a map in $\varpre(\fan)$ which is
  a quasi-isomorphism on each $U_\sigma$, we know that $C$ is a
  homotopy sheaf if and only if $D$ is a homotopy sheaf. Moreover, the
  class of homotopy sheaves is closed under kernels, cokernels,
  extensions, and retracts.
\end{remark}

\begin{remark}
  \label{rem:sheaves_modules_same}
  In the case of a toric scheme the categories $\pre(\fan)$
  (Definition~\ref{def:preheaves}) and $\varpre (\fan)$ codify the
  same information. Recall that for an affine scheme $U =
  \mathrm{Spec} (B)$ the category of quasi-coherent sheaves on~$U$ is
  equivalent, via the exact global sections functor, to the category
  of $B$-modules. Consequently, {\it if~$X=X_\fan$ is a regular
    toric scheme with fan~$\fan$, the functor
  \[\varpre(\fan) \rTo \pre(\fan), \quad C \mapsto \Big(
  \sigma \mapsto \Gamma(C; U_\sigma) \Big)\]
  is an equivalence of categories. It maps strict sheaves to
  strict sheaves, and homotopy sheaves to homotopy sheaves.\/}

The difference between $\pre (\fan)$ and $\varpre (\fan)$ is of a
purely technical nature; the choice of which category to use is mostly
dictated by convenience rather than necessity. Our previous results on
homotopy sheaves and colocalisation thus apply {\it mutatis mutandis}
for a regular toric scheme~$X_\fan$.
\end{remark}

\subsection{Model structures}
\label{subsec:model_structures_sheaves}

For every quasi-separated and quasi-compact scheme $Y$ the category
$\qco (Y)$ of quasi-coherent $\mathcal{O}_Y$-module sheaves is a
\textsc{Grothendieck\/} \textsc{abel}ian category \cite[\S
B.3]{TT-KofSchemes} which, in particular, satisfies axiom~AB5
(``filtered colimits are exact'').  It is well-known \cite{MR1814077}
that therefore the category $\mathrm{Ch}\, \qco (Y)$ of (possibly
unbounded) chain complexes of quasi-coherent sheaves on~$Y$ admits the
{\it injective model structure\/} with weak equivalences the
quasi-isomorphisms, and cofibrations the levelwise injections.

Since a semi-separated scheme is automatically quasi-separated, and
quasi-separatedness is stable under passage to open subschemes, this
applies to our scheme $X$ as well as to all the covering
sets~$U_\sigma$.

\bigbreak

The full subcategory of~$\varpre (\fan)$ spanned by the strict sheaves
is equivalent to the category $\mathrm{Ch}\,\qco(X)$ of (unbounded)
chain complexes of quasi-coherent sheaves on~$X_\fan$.  Its derived
category $D(\qco (X))$ can be obtained as the homotopy category of
the injective model structure of $\mathrm{Ch}\,\qco(X)$ described
above.

\begin{lemma}
  Let $U \subseteq X$ be an open subset.  The functor
  \[\mathrm{Ch}\,\qco(X) \rTo \mathrm{Ch}\,\qco(U), \quad
  \mathcal{F} \mapsto \mathcal{F}|_U\]
  is a left \textsc{Quillen\/} functor with right adjoint given by
  push-forward along the inclusion $U \rTo X$. (Here we equip
  $\mathrm{Ch}\,\qco(U)$ with the injective model structure as well.)
\end{lemma}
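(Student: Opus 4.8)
The plan is to establish the adjunction first and then deduce the \textsc{Quillen} property from the exactness of restriction. Write $j \colon U \rTo X$ for the open immersion. On categories of $\mathcal{O}$-module sheaves the restriction functor $j^* = (-)|_U$ is, by general sheaf theory, left adjoint to the push-forward $j_*$, and this adjoint pair extends degreewise to the associated categories of (possibly unbounded) chain complexes. The first task is to check that the adjunction restricts to the quasi-coherent subcategories, that is, that $j^*$ takes values in $\mathrm{Ch}\,\qco(U)$ and $j_*$ takes values in $\mathrm{Ch}\,\qco(X)$. The first inclusion is immediate, since the restriction of a quasi-coherent sheaf to an open subscheme is again quasi-coherent. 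The second is the only non-formal point: because $X$ is quasi-compact and semi-separated, hence quasi-separated, the immersion $j$ is a quasi-compact, quasi-separated morphism, so push-forward along~$j$ preserves quasi-coherence by \cite[\S B.7]{TT-KofSchemes}; applying $j_*$ degreewise preserves quasi-coherence of chain complexes. This yields the adjunction $j^* \colon \mathrm{Ch}\,\qco(X) \rightleftarrows \mathrm{Ch}\,\qco(U) \colon j_*$.

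It then remains to verify that $j^*$ is left \textsc{Quillen}, and for this it suffices to show that $j^*$ preserves cofibrations and weak equivalences, since a left adjoint preserving both automatically sends acyclic cofibrations (cofibrations that are weak equivalences) to acyclic cofibrations. Both facts follow from the single observation that $j^*$, being restriction along an open immersion, is an \emph{exact} functor $\qco(X) \rTo \qco(U)$ and therefore operates degreewise and exactly on chain complexes. In the injective model structures the cofibrations are exactly the levelwise monomorphisms, and an exact functor preserves monomorphisms in each degree, so $j^*$ carries cofibrations to cofibrations. Likewise the weak equivalences are the quasi-isomorphisms, and an exact functor commutes with the formation of homology, so $j^*$ carries quasi-isomorphisms to quasi-isomorphisms. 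Hence $j^*$ preserves cofibrations and acyclic cofibrations, which is precisely the assertion that it is a left \textsc{Quillen} functor with right adjoint~$j_*$.

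The only genuine obstacle is the preservation of quasi-coherence under push-forward, which is where the standing finiteness hypotheses on~$X$ (quasi-compactness and semi-separatedness) are indispensable and which I would isolate through the cited Thomason--Trobaugh material. Once the adjunction is known to live inside the quasi-coherent categories, the \textsc{Quillen} property is entirely formal and rests on nothing beyond the exactness of open restriction.
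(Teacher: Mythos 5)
Your proof is correct and follows essentially the same route as the paper, whose entire argument is that restriction to an open subset is exact and therefore preserves the levelwise injections (cofibrations) and quasi-isomorphisms (weak equivalences) of the injective model structures. The extra care you take with the adjunction itself --- checking that $j_*$ lands in quasi-coherent complexes via quasi-compactness and quasi-separatedness --- is a point the paper handles in the surrounding discussion (where it notes the relevant inclusions are even affine) rather than in the proof, but it is the same argument.
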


\begin{proof}
  This follows from the fact that restriction to open subsets is
  exact, hence preserves weak equivalences (quasi-isomorphisms) and
  cofibrations (injections).
\end{proof}

\begin{lemma}
  \label{lem:model_structure_varpre}
  The category $\varpre(\fan)$ has a model structure where a map is a
  weak equivalence if it is an objectwise quasi-isomorphism, and a
  cofibration if it is objectwise and levelwise injective.
\end{lemma}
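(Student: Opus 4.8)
The plan is to recognise $\varpre(\fan)$ as a twisted diagram category, as already noted in the excerpt, and to obtain the desired model structure by transfer or by direct appeal to the general machinery of \cite{HR-Twisted}. Recall that $\varpre(\fan)$ is formed with respect to the adjunction bundle $\sigma \mapsto \mathrm{Ch}\,\qco(U_\sigma)$ with structural adjunctions given by restriction and push-forward along inclusions. Each fibre category $\mathrm{Ch}\,\qco(U_\sigma)$ carries the injective model structure (weak equivalences the quasi-isomorphisms, cofibrations the levelwise injections), as recalled in \S\ref{subsec:model_structures_sheaves}; these exist precisely because each $U_\sigma$ is quasi-compact and quasi-separated. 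The candidate model structure in the statement---objectwise quasi-isomorphisms as weak equivalences, objectwise-and-levelwise injections as cofibrations---is exactly the analogue of the $c$-structure (\ref{sec:c-structure}) in this sheaf-theoretic setting, so the natural route is to apply \cite[Theorem~3.2.13]{HR-Twisted} with the injective model structures on the fibres in place of the projective model structures on $\mathrm{Ch}_{A^\sigma}$.

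First I would verify that the data $(U_\sigma)_{\sigma\in\fan}$ together with the fibre categories $\mathrm{Ch}\,\qco(U_\sigma)$ and the restriction/push-forward adjunctions genuinely constitute an adjunction bundle over $\fan^\op$ in the sense of \cite[\S2.2]{HR-Twisted}: the left adjoints (restriction) must compose correctly up to coherent natural isomorphism along chains $\nu\subseteq\tau\subseteq\sigma$, which is exactly the cocycle condition built into the definition of $\varpre(\fan)$. Second, I would check that each fibre model structure is cofibrantly generated (or combinatorial), since the transfer theorem of \cite{HR-Twisted} requires this; the injective model structure on $\mathrm{Ch}\,\qco(U_\sigma)$ is known to be cofibrantly generated because $\qco(U_\sigma)$ is a Grothendieck abelian category \cite{MR1814077}, as cited in \S\ref{subsec:model_structures_sheaves}. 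Third, I would confirm that the left adjoints (restriction functors) are left Quillen, i.e.\ preserve cofibrations and trivial cofibrations; this is immediate since restriction to an open subset is exact, hence preserves both levelwise injections and quasi-isomorphisms. With these hypotheses in place, \cite[Theorem~3.2.13]{HR-Twisted} yields the $c$-type model structure on $\varpre(\fan)$ with exactly the weak equivalences and cofibrations claimed.

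The main obstacle is likely to be the smallness and cofibrant-generation bookkeeping rather than any conceptual difficulty: one must produce an explicit generating set of cofibrations and trivial cofibrations in $\varpre(\fan)$ by freely pushing the fibre-level generators forward along the left adjoints $F_\tau$ (compare Lemma~\ref{lem:c_structure_is_cellular} for the module case), and verify the usual small-object-argument hypotheses in the possibly unbounded chain-complex setting over Grothendieck categories. The injective model structures are combinatorial but need not be cellular in the strict sense of \cite[\S12]{Hirsch-Loc}, so some care is required in citing the correct existence theorem; fortunately the transfer result of \cite{HR-Twisted} is stated at the level of generality needed. Once the generating sets are identified, checking the retract, two-out-of-three, and lifting axioms reduces objectwise to the corresponding axioms in the fibre categories, and the factorisation axioms follow from the small object argument applied to the transferred generating sets.

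Finally, I would remark that right properness and the identification of fibrations are not asserted in this particular statement, so the proof can be kept short: it suffices to invoke the transfer theorem and record that the weak equivalences and cofibrations match the description given. The analogous statement for $\pre(\fan)$ was handled in \ref{sec:c-structure} by direct citation of \cite[Theorem~3.2.13]{HR-Twisted}, and the present case differs only in replacing $\mathrm{Ch}_{A^\sigma}$ by $\mathrm{Ch}\,\qco(U_\sigma)$ with its injective model structure; thus the cleanest write-up mirrors that earlier argument almost verbatim.
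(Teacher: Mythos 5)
You have misidentified which of the two twisted-diagram model structures is being asserted, and consequently you cite the wrong theorem. The lemma's cofibrations are the \emph{objectwise} cofibrations (levelwise injections are exactly the cofibrations of the injective model structure on each fibre $\mathrm{Ch}\,\qco(U_\sigma)$); a structure with objectwise cofibrations and matching-complex fibrations is the $f$-structure of \ref{sec:f-structure}, i.e.\ \cite[Theorem~3.3.5]{HR-Twisted}, which is precisely what the paper invokes. It is not the analogue of the $c$-structure of \ref{sec:c-structure}: there it is the \emph{fibrations} that are objectwise, while the cofibrations are cut out by a latching-object condition and form, in general, a proper subclass of the objectwise cofibrations. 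Applying \cite[Theorem~3.2.13]{HR-Twisted} to the bundle $\sigma \mapsto \mathrm{Ch}\,\qco(U_\sigma)$ with injective fibres would therefore produce a model structure whose cofibrations are \emph{not} the maps described in the statement, so your argument, as written, proves a different (and here unwanted) result. The distinction is not cosmetic downstream either: the paper needs the $f$-type fibrations --- surjectivity of $C^\sigma \rTo M^\sigma C$ after fibrant replacement --- in the proof of Lemma~\ref{lem:pullback_gluing}.

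Relatedly, the verifications you propose (cofibrant generation of the fibres, generating sets obtained by pushing fibre-level generators forward along free functors as in Lemma~\ref{lem:c_structure_is_cellular}) are the hypotheses relevant to the $c$-structure and would in any case be delicate for injective model structures. For the $f$-structure over the inverse poset $\fan^\op$ the relevant inputs are instead that the structural adjunctions are Quillen adjunctions for the injective structures (restriction is exact, hence left Quillen, as you do observe) and that the matching objects $M^\sigma C = \lim_{\tau\subset\sigma} i^\tau_*(C^\tau)$ make sense and interact correctly with the (exact, because affine) push-forwards; no small-object bookkeeping is needed. The correct one-line proof is the one the paper gives: this is the $f$-structure of \cite[Theorem~3.3.5]{HR-Twisted} based on the injective model structures on the $\mathrm{Ch}\,\qco(U_\sigma)$.
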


\begin{proof}
  This is the $f$-structure of \cite[Theorem~3.3.5]{HR-Twisted}, based
  on the injective model structure of the categories $\mathrm{Ch}\,
  \qco (U_\sigma)$.
\end{proof}

Fibrations in this model structure can be characterised using {\it
  matching complexes\/}: Given $C \in \varpre (\fan)$ and $\sigma \in
\fan$ define $M^\sigma C = \lim_{\tau \subset \sigma} i^\tau_*
(C^\tau)$ where $i^\tau \colon U_\tau \subseteq U_\sigma$ is the
inclusion, and the limit is taken over all $\tau \in \fan$ strictly
contained in~$\sigma$. Then $f \colon C \rTo D$ is a fibration if and
only if for all $\sigma \in \fan$ the induced map
\begin{equation}
  \label{eq:fib_varpre}
  C^\sigma \rTo M^\sigma C \times_{M^\sigma D} D^\sigma
\end{equation}
is a fibration in the category $\mathrm{Ch}\,\qco(U_\sigma)$.---If $f$
is a fibration then in particular all the components $f^\sigma \colon
C^\sigma \rTo D^\sigma$ are fibrations in their respective categories.

\subsection{Strictifying homotopy sheaves}

Now consider the ``constant diagram'' functor, defined by
\[\Phi \colon \mathrm{Ch}\, \qco (X) \rTo \varpre (\fan), \quad
{\cal F} \mapsto \Big( \sigma \mapsto {\cal F}|_{U_\sigma} \Big) \ .\]

With respect to the model structure of
Lemma~\ref{lem:model_structure_varpre} the functor~$\Phi$ is left
\textsc{Quillen} (by exactness of restriction to open subsets) with
right adjoint given by
\[\Xi \colon \varpre (\fan) \rTo \mathrm{Ch}\, \qco (X), \quad C
\mapsto
\lim_{\sigma \in \fan^{\mathrm{op}}} j^\sigma_* (C^\sigma)\]
where the $j^\sigma \colon U_\sigma \rTo X$ are the various inclusion
maps. By construction we have canonical maps $\Xi(C) \rTo j^\sigma_*
C^\sigma$ which give rise, upon restriction to $U_\sigma$, to maps
\[r_\sigma \colon (\Xi(C))|_{U_\sigma} \rTo (j^\sigma_*
C^\sigma)|_{U_\sigma} = C^\sigma \ .\]
These maps are natural in~$\sigma$ in the sense that for each
inclusion $\tau \subseteq \sigma$ of cones in~$\fan$ the map $r_\tau$
equals the composite map
\begin{equation}
  \label{eq:naturality_of_r}
  \Xi(C)|_{U_\tau} = (\Xi(C)|_{U_\sigma})|_{U_\tau}
  \rTo[l>=3.5em]^{r_\sigma|_{U_\tau}} C^\sigma|_{U_\tau} \rTo C^\tau \ .
\end{equation}
In other words, the maps $r_\sigma$ assemble to a map of presheaves
\[r \colon \Phi \circ \Xi (C) \rTo C\]
which is the counit of the adjunction of~$\Phi$ and~$\Xi$.

\medbreak

Recall that an object $C \in \varpre (\fan)$ is a homotopy sheaf
(Definition~\ref{def:var_homotopy_sheaf}) if the structure maps
$C^\sigma|_{U_\tau} \rTo C^\tau$ are quasi-isomorphisms for all
inclusions $\tau \subseteq \sigma$ in~$\fan$. The following Lemma
shows how the functor~$\Xi$ can be used to strictify homotopy sheaves,
\ie, how to replace a homotopy sheaf by weakly equivalent strict
sheaf:

\begin{lemma}
  \label{lem:pullback_gluing}
  Every homotopy sheaf $\bar C \in \varpre(\fan)$ is weakly equivalent
  to a strict sheaf. More precisely, let $C \lTo^\simeq \bar C$ denote
  a fibrant replacement. Then for each $\sigma \in \fan$ the canonical
  map
  \[r_\sigma \colon \Xi(C)|_{U_\sigma} \rTo C^\sigma\]
  is a quasi-isomorphism in $\mathrm{Ch}\, \qco (U_\sigma)$. In other
  words, we have a chain of weak equivalences of homotopy sheaves
  \[\Phi \circ \Xi(C) \rTo^\simeq_r C \lTo^\simeq \bar C\]
  where $\Phi \circ \Xi(C)$ is, in fact, a strict sheaf.
\end{lemma}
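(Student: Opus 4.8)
The plan is to show that the counit $r\colon \Phi\circ\Xi(C)\to C$ is an objectwise quasi-isomorphism. Since $\Phi\circ\Xi(C)$ is the image of $\Xi(C)\in\mathrm{Ch}\,\qco(X)$ under~$\Phi$, it is automatically a strict sheaf (its structure maps $(\mathcal F|_{U_\sigma})|_{U_\tau}\to\mathcal F|_{U_\tau}$ are identities), so once $r$ is known to be a weak equivalence the asserted chain $\Phi\circ\Xi(C)\to C\leftarrow\bar C$ of weak equivalences of homotopy sheaves follows at once. First I record that $C$ is a \emph{fibrant homotopy sheaf}: it is fibrant as a fibrant replacement of~$\bar C$, and it is a homotopy sheaf by Remark~\ref{rem:hom_invariant_alt}, the map $\bar C\to C$ being an objectwise quasi-isomorphism. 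Everything therefore reduces to proving that for each $\rho\in\fan$ the canonical map $r_\rho\colon\Xi(C)|_{U_\rho}\to C^\rho$ is a quasi-isomorphism.

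For the computation I first move the restriction inside the limit. Because $\fan$ is finite, $\Xi(C)=\lim_{\sigma\in\fan^{\op}} j^\sigma_*C^\sigma$ is a \emph{finite} limit in the abelian category $\mathrm{Ch}\,\qco(X)$, and restriction to~$U_\rho$ is exact, hence commutes with it: $\Xi(C)|_{U_\rho}=\lim_{\sigma\in\fan^{\op}}(j^\sigma_*C^\sigma)|_{U_\rho}$. Flat base change along the open immersions, together with $U_\rho\cap U_\sigma=U_{\rho\cap\sigma}$, identifies $(j^\sigma_*C^\sigma)|_{U_\rho}$ with $(k_{\rho\cap\sigma})_*\big(C^\sigma|_{U_{\rho\cap\sigma}}\big)$, where $k_\tau\colon U_\tau\hookrightarrow U_\rho$ denotes the (affine) inclusion for a face $\tau\subseteq\rho$. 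As $C$ is a homotopy sheaf and the $(k_\tau)_*$ are exact, the structure maps $C^\sigma|_{U_{\rho\cap\sigma}}\to C^{\rho\cap\sigma}$ induce an objectwise quasi-isomorphism from this diagram onto $\sigma\mapsto(k_{\rho\cap\sigma})_*C^{\rho\cap\sigma}$, which depends on~$\sigma$ only through $\sigma\cap\rho$.

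Thus the diagram is pulled back along the retraction $p\colon\fan\to\fan_{\subseteq\rho}$, $\sigma\mapsto\sigma\cap\rho$, from the diagram $E\colon\tau\mapsto(k_\tau)_*C^\tau$ on the poset $\fan_{\subseteq\rho}$ of faces of~$\rho$. Since $\tau\subseteq\sigma\Leftrightarrow\tau\subseteq\sigma\cap\rho$ for $\tau\subseteq\rho$, the functor $p$ is right adjoint to the inclusion $\fan_{\subseteq\rho}\hookrightarrow\fan$; consequently $p^{\op}$ is homotopy initial (each of its comma categories has a terminal object), so the homotopy limit over~$\fan^{\op}$ of the pulled-back diagram agrees with $\holim_{\fan_{\subseteq\rho}^{\op}}E$. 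Finally $\fan_{\subseteq\rho}$ has terminal object~$\rho$, whence $\fan_{\subseteq\rho}^{\op}$ has \emph{initial} object~$\rho$ and the homotopy limit collapses to $E(\rho)=C^\rho$, the collapse map being exactly~$r_\rho$. This yields $\Xi(C)|_{U_\rho}\simeq C^\rho$ as desired.

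The main obstacle is the homotopical bookkeeping underlying the previous paragraph: I must know that the \emph{strict} finite limit $\Xi(C)|_{U_\rho}=\lim_{\fan^{\op}}(k_{\rho\cap\sigma})_*\big(C^\sigma|_{U_{\rho\cap\sigma}}\big)$ actually computes the \emph{homotopy} limit, so that the objectwise replacement of diagrams and the cofinality reduction are legitimate. This is where fibrancy of~$C$ enters: the $f$-structure of Lemma~\ref{lem:model_structure_varpre} is precisely a matching-object (Reedy) fibrancy condition on the inverse category~$\fan^{\op}$, and the point to verify is that this condition survives application of the exact functors $j^\sigma_*$ and $(-)|_{U_\rho}$, so that the strict limit remains a homotopy limit invariant under the objectwise quasi-isomorphism above. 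Establishing this compatibility of fibrancy with base change and restriction—equivalently, that $\Xi$ carries the fibrant object~$C$ to a derived limit that restricts correctly to each~$U_\rho$—is the technical heart of the argument; the combinatorial cofinality step and the collapse to~$C^\rho$ are then formal.
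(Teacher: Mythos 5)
Your overall skeleton matches the paper's: restrict inside the (finite) limit, use $U_\rho\cap U_\sigma=U_{\rho\cap\sigma}$ to rewrite each term as a push-forward from a face of~$\rho$, use the homotopy-sheaf structure maps to replace the diagram objectwise by one depending only on $\sigma\cap\rho$, and collapse. Your cofinality formulation (the retraction $\sigma\mapsto\sigma\cap\rho$ being right adjoint to the inclusion of the face poset of~$\rho$) is a clean packaging of what the paper does by hand via the ``redundant terms'' isomorphism~$g$ and the reduction to maximal cones. However, there is a genuine gap, and you name it yourself without closing it: the entire argument hinges on knowing that the \emph{strict} limit $\Xi(C)|_{U_\rho}$ computes a homotopy limit, so that an objectwise quasi-isomorphism of diagrams and a cofinality reduction may be applied to it. That is not ``homotopical bookkeeping'' to be deferred --- it is the technical heart of the lemma, and the paper spends most of its proof on exactly this point. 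Concretely, the paper (i) splits the limit via a cartesian square over $\{\tau\subseteq\sigma\}$, $\{\tau\neq\sigma\}$ and $\{\tau\subset\sigma\}$ and invokes right properness of the injective structure together with fibrancy of~$C$ to reduce to the map~$h$ of the lower edge; (ii) applies \textsc{Brown}'s Lemma to the inverse-limit functor on $\mathrm{Fun}((\fan\setminus\{\sigma\})^{\op},\mathrm{Ch}\,\qco(U_\sigma))$; and (iii) verifies that \emph{both} the source diagram $\tau\mapsto i^{\tau\cap\sigma}_*(C^\tau|_{U_{\tau\cap\sigma}})$ and the target diagram $\tau\mapsto i^{\tau\cap\sigma}_*(C^{\tau\cap\sigma})$ satisfy the matching-object fibrancy condition. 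Item (iii) for the target is not a formal consequence of fibrancy of~$C$: for $\nu\not\subseteq\sigma$ the matching map is only shown to be surjective by a combinatorial argument identifying the matching object with a limit over the poset $Q=\{\tau\cap\sigma\,|\,\tau\subset\nu\}$, which has a maximal element, so the matching map is split. Nothing in your proposal supplies this, and without it the step ``the objectwise quasi-isomorphism onto $\sigma\mapsto(k_{\rho\cap\sigma})_*C^{\rho\cap\sigma}$ induces a quasi-isomorphism on limits'' is unjustified.

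A secondary remark: even granting the fibrancy verifications, your plan applies them to diagrams indexed by all of $\fan^{\op}$, whereas the paper deliberately excises the index~$\sigma$ first (via the pullback square) precisely because the matching condition at $\sigma$ itself is where the two diagrams in your quasi-isomorphism differ in an essential way; if you want to avoid that excision you must check the matching condition at every $\nu\in\fan$, including $\nu=\rho$, for both diagrams. So the proposal is a correct outline with the right reduction steps, but the load-bearing part of the proof is missing.
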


\begin{proof}
  First note that $C$, being weakly equivalent to the homotopy
  sheaf~$\bar C$, is a homotopy sheaf by
  Remark~\ref{rem:hom_invariant_alt}.

  We have to prove that the map $r_\sigma \colon \Xi(C)|_{U_\sigma}
  \rTo C^\sigma$ is a weak equivalence in the category $\mathrm{Ch}\,
  \qco (U_\sigma)$.  In fact, it is enough to prove the claim for all
  maximal cones~$\sigma$: Given any $\tau \in \fan$ choose a maximal
  cone~$\sigma$ containing~$\tau$.  By~(\ref{eq:naturality_of_r}), the
  map~$r_\tau$ then is the composition of the restriction of the weak
  equivalence $r_\sigma$ to~$U_\tau$ with the structure map
  $C^\sigma|_{U_\tau} \rTo C^\tau$.  The latter is a quasi-isomorphism
  since $C$~is a homotopy sheaf, the former is a quasi-isomorphism
  since restriction is exact. Hence $r_\tau$~is a weak equivalence.

  So let $\sigma \in \fan$ be a maximal cone. We want to show that the
  top horizontal map $t = r_\sigma$ in the following diagram is a weak
  equivalence (where $j^{\tau} \colon U_{\tau} \rTo X$ denotes the
  inclusion map as before):
  \begin{equation}
    \label{diagram_h}
    \begin{diagram}
      \hbox to 0 pt {\hss \( (\Xi(C))|_{U_\sigma} \ = \ \)} \lim_{\tau
        \in \fan^\op} (j^\tau_* C^\tau)|_{U_\sigma} & \rTo^t &
      \lim_{\tau \subseteq \sigma} (j^\tau_* C^\tau)|_{U_\sigma} &
      \iso C^\sigma
      \\
      \dTo && \dTo>p \\
      \lim_{\tau \not= \sigma} (j^\tau_* C^\tau)|_{U_\sigma} &
      \rTo[l>=3em]^h & \lim_{\tau \subset \sigma} (j^\tau_*
      C^\tau)|_{U_\sigma}
    \end{diagram}
  \end{equation}

  The diagram is cartesian: It arises from first re-writing
  the limit defining $\Xi(C)$ as a pullback of limits indexed over
  smaller categories, then applying the exact restriction functor
  $(\,\cdot\,)|_{U_\sigma}$.  Moreover, the map~$p$ is a fibration
  since $C$ is a fibrant object; indeed, $p$~is nothing but the
  map~(\ref{eq:fib_varpre}) corresponding to $\sigma \in \fan$ for the
  map $C \rTo 0$. Hence by right properness of the injective model
  structure of $\mathrm{Ch}\, \qco (U_\sigma)$ it is enough to show
  that the lower horizontal map~$h$ is a weak equivalence.

  \medbreak

  For $\nu \subseteq \sigma$ let $i^\nu \colon U_\nu \rTo U_\sigma$
  and $j^\nu \colon U_\nu \rTo X$ denote the inclusions. Then we have
  an equality
  \begin{equation}
    \label{eq:res_1}
    \big(j^\nu_* (\mathcal{F})\big)|_{U_\sigma} = i^\nu_*
    (\mathcal{F}) \quad \hbox{for } \mathcal{F} \in \qco (U_\nu) \ ,
  \end{equation}
  and if $\tau \supseteq \nu$ is another cone,
  \begin{equation}
    \label{eq:res_2}
    \big( j^\tau_* (\mathcal{G})\big)|_{U_\sigma} = i^\nu_*
    (\mathcal{G})|_{U_\nu} \quad \hbox{for } \mathcal{G} \in \qco
    (U_\tau) \ .
  \end{equation}

  We embed the map~$h$ of diagram~(\ref{diagram_h}) above into the
  larger diagram (\ref{eq:huge_diagram}) below. We have
  used~(\ref{eq:res_1}) for the upper vertical map on the right,
  and~(\ref{eq:res_2}) for the upper vertical map on the left (recall
  also that restriction and push forward are exact functors, hence
  commute with finite limits). The map~$f$ is induced by the structure
  maps $C^\tau|_{U_{\tau \cap \sigma}} \rTo C^{\tau \cap \sigma}$
  of~$C$.

  \begin{equation}
  \label{eq:huge_diagram}
  \begin{diagram}
    \lim_{\tau \not= \sigma} (j^\tau_* C^\tau)|_{U_\sigma} & \rTo^h &
    \lim_{\tau \subset \sigma} (j^\tau_* C^\tau)|_{U_\sigma} \\
    \dTo<= && \dTo>= \\
    \lim_{\tau \not= \sigma} i^{\tau \cap \sigma}_* (C^\tau|_{U_{\tau
        \cap \sigma}}) & \rTo[l>=3em] & \lim_{\tau \subset \sigma}
    i^{\tau}_* (C^\tau) \\
    \dTo<\simeq>f && \dTo>= \\
    \lim_{\tau \not= \sigma} i^{\tau \cap \sigma}_* (C^{\tau \cap
      \sigma}) & \rTo[l>=5em]_g^\iso & \lim_{\tau
      \subset \sigma} i^{\tau}_* (C^\tau)
  \end{diagram}
  \end{equation}
  The map~$g$ is easily seen to be an isomorphism: In the
  diagram $\tau \mapsto i^{\tau \cap \sigma}_* (C^{\tau \cap \sigma})$
  all structure maps corresponding to the inclusions $\tau \cap \sigma
  \subseteq \tau$ are isomorphisms, hence all terms with $\tau
  \not\subseteq \sigma$ are redundant when forming the limit, and the
  map~$g$ is given by forgetting the redundant terms.

  \medbreak

  We are thus reduced to showing that the map~$f$ is a
  quasi-isomorphism which will follow from an application of
  \textsc{Brown}'s Lemma \cite[dual of Lemma~9.9]{Dwyer-Spalinski}.

  \medbreak
  
  We need some preliminary remarks. Recall that since $U_\sigma$ is
  affine, say $U_\sigma = \mathrm{Spec}\, A_\sigma$, the category
  $\mathrm{Ch}\, \qco (U_\sigma)$ is equivalent to the category of
  $A_\sigma$-modules. Hence $\mathrm{Ch}\, \qco (U_\sigma)$ is
  equivalent to the category $\mathrm{Ch}_{A_\sigma}$, which implies
  that we can equip the category $\mathrm{Ch}\, \qco (U_\sigma)$ with
  the {\it projective model structure\/}: Fibrations are the levelwise
  surjective maps, and weak equivalences are the quasi-isomorphisms. A
  cofibration in the projective model structure turns out to be
  levelwise injective (even levelwise split injective), but this
  condition does not characterise cofibrations.

  \medbreak

  We will
  denote the category of functors $(\fan \setminus
  \{\sigma\})^\op \rTo \mathrm{Ch}\, \qco (U_\sigma)$ by
  \[\mathcal{C} := \mathrm{Fun}\, \big((\fan \setminus
  \{\sigma\})^\op,\, \qco (U_\sigma)\big) \ .\]
  The category~$\mathcal{C}$ carries a model structure where a map is
  a weak equivalence (\resp, cofibration) if and only if it is an
  objectwise weak equivalence (\resp, cofibration in the projective
  model structure). A diagram $D \in \mathcal{C}$ is fibrant if and
  only if for all $\nu \in \fan \setminus \{\sigma\}$ the map
  \[D^\nu \rTo \lim_{\tau \subset \nu} D^\tau\]
  is a fibration in the projective model structure (\ie, is levelwise
  surjective), the limit taken over all cones $\tau \in \fan \setminus
  \sigma$ strictly contained in~$\nu$.

  With respect to the projective model structure of $\mathrm{Ch}\,
  \qco (U_\sigma)$ the inverse limit functor
  \[\lim \colon \mathcal{C} \rTo \mathrm{Ch}\, \qco (U_\sigma), \quad D
  \mapsto \lim_{\fan \setminus \{\sigma\})^\op} (D)\]
  is right \textsc{Quillen\/} with left adjoint given by the constant diagram
  functor
  \[\Delta \colon \mathrm{Ch}\, \qco (U_\sigma) \rTo \mathcal{C},
  \quad C \mapsto \Big( \Delta (C) \colon \tau \mapsto C \Big) \ ;\]
  note that $\Delta$ preserves weak equivalences and cofibrations as
  these notions are defined objectwise in~$\mathcal{C}$. Thus, using
  \textsc{Brown}'s Lemma \cite[dual of Lemma~9.9]{Dwyer-Spalinski}, we
  know that {\it if $f$ is a weak equivalence in~$\mathcal{C}$ with source
  and target fibrant diagrams, then $\lim (f)$ is a weak equivalence
  in $\mathrm{Ch}\, \qco(U_\sigma)$}.

  \medbreak

  We will apply this last observation to the map~$f$ in the
  diagram~(\ref{eq:huge_diagram}): We know that $f$~is a weak
  equivalence provided we can verify the following three assertions:
  \begin{itemize}
  \item [\rm (1)] The natural transformation of diagrams defining~$f$ consists
  of weak equivalences (quasi-isomorphisms)
  \item[\rm (2)] The diagram $\tau \mapsto i^{\tau \cap \sigma}_*
  (C^\tau|_{U_{\tau \cap \sigma}})$ (the source of~$f$) is a fibrant
  object of~$\mathcal{C}$
  \item[\rm (3)] The diagram $\tau \mapsto i^{\tau \cap \sigma}_*
  (C^{\tau \cap \sigma})$ (the target of~$f$) is a fibrant object
  of~$\mathcal{C}$
  \end{itemize}

  Assertion~(1) is easy to verify. The map~$f$ is induced by the structure
  maps $C^\tau|_{U_{\tau \cap \sigma}} \rTo C^{\tau \cap \sigma}$ which are weak
  equivalences since~$C$ is a homotopy sheaf by hypothesis?; note also that the
  functor $i_*^{\tau \cap \sigma}$ is exact since the inclusion $U_{\tau \cap
  \sigma} \subseteq U_\sigma$ is affine.

  \medbreak

  For assertion~(2) we have to verify that for each $\nu \in \fan
  \setminus \sigma$ the map
  \begin{equation}
    \label{eq:assertion_ii}
    i_*^{\nu \cap \sigma}
    (C^\nu|_{U_{\nu \cap \sigma}}) \rTo \lim_{\tau 
    \subset \nu} i_*^{\tau \cap \sigma} (C^\tau|_{U_{\tau \cap \sigma}})
  \end{equation}
  is levelwise surjective. By hypothesis $C$ is a fibrant object
  (Lemma~\ref{lem:model_structure_varpre}) of~$\varpre (\fan)$, so
  the map
  \[C^\nu \rTo \lim_{\tau \subset \nu} k_*^\tau (C^\tau)\]
  (with $k^\tau$ being the inclusion $U_\tau \subseteq U_\nu$) is a fibration
  in the injective model structure of $\mathrm{Ch}\, \qco (U_\nu)$; in
  particular, this map is levelwise surjective. Since restriction to open
  subsets is exact, it follows that the map
  \[C^\nu|_{U_{\nu \cap \sigma}} \rTo \lim_{\tau \subset \nu} (k_*^\tau
  (C^\tau))|_{\nu \cap \sigma} = \lim_{\tau \subset \nu} \ell^{\tau \cap
    \sigma}_* (C^\tau|_{U_{\tau \cap \sigma}})\]
  is levelwise surjective, where now $\ell^{\tau \cap \sigma}$ denotes the
  inclusion $U_{\tau \cap \sigma} \subseteq U_{\nu \cap \sigma}$. We can now
  apply the exact functor $i_*^{\nu \cap \sigma}$; since $i_*^{\nu \cap
    \sigma} \circ \ell_*^{\tau \cap \sigma} = i_*^{\tau \cap \sigma}$ we
  conclude that the map~(\ref{eq:assertion_ii}) is levelwise surjective as
  claimed.
  
  \medbreak

  We now discuss assertion~(3). We have to show that for each $\nu
  \in \fan \setminus \sigma$ the map
  \begin{equation}
    \label{eq:assertion_iii}
    i_*^{\nu \cap \sigma}
    (C^{\nu \cap \sigma}) \rTo \lim_{\tau 
    \subset \nu} i_*^{\tau \cap \sigma} (C^{\tau \cap \sigma})
  \end{equation}
  is levelwise surjective (where $i^\mu \colon U_\mu \rTo U_\sigma$ as
  before).

  \medbreak
  
  Consider the diagram
  \[D \colon \{\tau \subset \nu\}^\op \rTo \mathrm{Ch}\, \qco
  (U_\sigma),\quad \tau \mapsto i^{\tau \cap \sigma}_* (C^{\tau \cap
    \sigma}) \ ,\]
  its limit being the target of the map~(\ref{eq:assertion_iii}). If
  $\nu \subset \sigma$ then the map~(\ref{eq:assertion_iii}) arises by
  application of the exact functor $i_*^\nu = i_*^{\nu \cap \sigma}$
  to the map
  \begin{equation}
    \label{eq:assertion_iii_2}
    C^\nu = C^{\nu \cap \sigma} \rTo \lim_{\tau \subset \nu} \ell^\tau_* C^\tau
  \end{equation}
  where $\ell^\tau \colon U_\tau \rTo U_\nu = U_{\nu \cap
    \sigma}$ is the inclusion map. Now $C$ is a fibrant object
  of~$\varpre(\fan)$ by hypothesis, so~(\ref{eq:assertion_iii_2}) is
  a fibration in the injective model structure, hence levelwise
  surjective. It follows that~(\ref{eq:assertion_iii}) is levelwise
  surjective as well.

  \medbreak

  It remains to deal with the case $\nu \not \subseteq \sigma$. Let
  $\tau~$ be a proper face of~$\nu$.  The structure maps of~$D$
  corresponding to the inclusions $\tau \cap \sigma \subseteq \tau$
  are identity maps:
  \[i^{\tau \cap \sigma}_* (C^{\tau \cap \sigma}) = i^{(\tau \cap
    \sigma) \cap \sigma}_* (C^{(\tau \cap \sigma) \cap \sigma}) \rTo
  i^{\tau \cap \sigma}_* (C^{\tau \cap \sigma})\]
  It follows that the limit of~$D$ is isomorphic to the limit of the
  restriction of~$D$ to faces of the form $\tau \cap \sigma$ for $\tau
  \subset \nu$. So define $Q:=  \{\tau \cap \sigma \,|\, \tau \subset
  \nu \}$. In fact, $Q$ is the poset of proper faces of~$\nu$ which
  are also faces of~$\sigma$. Now since $\nu \not \subseteq \sigma$ we
  know that $Q$ has maximal element $\nu \cap \sigma \subset \nu$.
  With this notation, the map~(\ref{eq:assertion_iii}) can be embedded
  into a chain
  \[i_*^{\nu \cap \sigma} (C^{\nu \cap \sigma})
  \rTo^{~(\ref{eq:assertion_iii})} \lim_{\tau \subset \nu} i_*^{\tau
    \cap \sigma} (C^{\tau \cap \sigma}) \rTo^\iso \lim_{\tau \in
    Q^\op} i_*^{\tau \cap \sigma} (C^{\tau \cap \sigma}) \iso i_*^{\nu
    \cap \sigma} (C^{\nu \cap \sigma})\]
  with composition the identity map. It follows that the
  map~(\ref{eq:assertion_iii}) is levelwise surjective as claimed.
\end{proof}

\subsection{The derived category via homotopy sheaves}

We have constructed a pair of adjoint functors
\[\Phi \colon \mathrm{Ch}\, \qco (X) \rTo \varpre (\fan)
\quad\hbox{and}\quad \Xi \colon \varpre (\fan) \rTo \mathrm{Ch}\, \qco
(X) \ ,\]
the functor $\Phi$ being the left adjoint. Moreover, the pair $(\Phi,
\Xi)$ is a \textsc{Quillen} pair with respect to the injective model
structure on~$\mathrm{Ch}\, \qco (X)$, and the model structure
described in Lemma~\ref{lem:model_structure_varpre}
on~$\varpre(\fan)$. From general model category theory, we obtain an
adjoint pair of total derived functors
\[\mathbf{L}\Phi \colon \mathrm{Ho}\, \mathrm{Ch}\, \qco (X) \rTo
\mathrm{Ho}\, \varpre (\fan) \ \hbox{and}\ \mathbf{R}\Xi \colon
\mathrm{Ho}\,\varpre (\fan) \rTo \mathrm{Ho}\,\mathrm{Ch}\,\qco(X)\]
which we can use to give a description of the derived category
$D\big(\qco (X)\big) = \mathrm{Ho}\, \mathrm{Ch}\, \qco (X)$ via
homotopy sheaves:

\begin{theorem}
  \label{thm:derived_is_correct}
  Let $\mathcal{H}$ denote the full subcategory of $\mathrm{Ho}\,
  \varpre (\fan)$ spanned by the homotopy sheaves. The
  \textsc{Quillen} pair $(\Phi, \Xi)$ induces an equivalence of
  categories
  \[\mathbf{L}\Phi \colon \mathrm{Ho}\, \mathrm{Ch}\, \qco (X) \rTo
  \mathcal{H}\]
  with inverse given by~$\mathbf{R}\Xi$.
\end{theorem}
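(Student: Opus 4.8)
The strategy is the standard one for showing that a \textsc{Quillen} pair restricts to an equivalence onto a subcategory: I will show that $\mathbf{L}\Phi$ takes values in $\mathcal{H}$, that the derived counit is an isomorphism on $\mathcal{H}$, and that the derived unit is an isomorphism everywhere. Three formal observations come first. Since restriction to the affine sets $U_\sigma$ is exact, $\Phi$ preserves \emph{and} detects objectwise quasi-isomorphisms; consequently $\mathbf{L}\Phi$ reflects isomorphisms, i.e.\ is conservative. Moreover, in the injective model structure on $\mathrm{Ch}\,\qco(X)$ every object is cofibrant and $\Phi$ preserves all weak equivalences, so no cofibrant replacement is needed and $\mathbf{L}\Phi$ is represented, on objects and morphisms, by $\Phi$ itself. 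Finally, for any $\mathcal{F}$ the diagram $\Phi(\mathcal{F}) = (\sigma \mapsto \mathcal{F}|_{U_\sigma})$ is a \emph{strict} sheaf, since its structure maps $(\mathcal{F}|_{U_\sigma})|_{U_\tau} \to \mathcal{F}|_{U_\tau}$ are identities; in particular it is a homotopy sheaf, so $\mathbf{L}\Phi$ factors through~$\mathcal{H}$.

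The heart of the argument is to check that the derived counit $\mathbf{L}\Phi\,\mathbf{R}\Xi(C) \to C$ is an isomorphism for every $C \in \mathcal{H}$. Here I would represent $C$ by a \emph{fibrant} homotopy sheaf $\tilde C$; this is possible because being a homotopy sheaf is invariant under weak equivalence (Remark~\ref{rem:hom_invariant_alt}), so fibrant replacement in the model structure of Lemma~\ref{lem:model_structure_varpre} preserves the property. Then $\mathbf{R}\Xi(C)$ is represented by $\Xi(\tilde C)$, which is cofibrant, so the derived counit is represented by the ordinary counit $r \colon \Phi\,\Xi(\tilde C) \to \tilde C$. By Lemma~\ref{lem:pullback_gluing} each component $r_\sigma$ is a quasi-isomorphism, hence $r$ is an objectwise weak equivalence and the derived counit is an isomorphism. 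This is the one step carrying genuine geometric content — all the work of gluing sections across the covering is encoded in Lemma~\ref{lem:pullback_gluing}, and I expect that lemma (already established above) to be the real obstacle; everything surrounding it is purely formal.

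It remains to assemble the equivalence. Since $\mathbf{L}\Phi(\mathcal{F}) \in \mathcal{H}$, the counit $\epsilon_{\mathbf{L}\Phi\mathcal{F}}$ is an isomorphism by the previous paragraph; the triangle identity $\epsilon_{\mathbf{L}\Phi\mathcal{F}} \circ \mathbf{L}\Phi(\eta_\mathcal{F}) = \mathrm{id}$ then forces $\mathbf{L}\Phi(\eta_\mathcal{F})$ to be an isomorphism, and conservativity of $\mathbf{L}\Phi$ upgrades this to the statement that the derived unit $\eta_\mathcal{F}$ is itself an isomorphism in $\mathrm{Ho}\,\mathrm{Ch}\,\qco(X)$. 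Thus $\mathbf{L}\Phi$ is fully faithful. It is also essentially surjective onto $\mathcal{H}$: any homotopy sheaf $C$ satisfies $C \cong \mathbf{L}\Phi\,\mathbf{R}\Xi(C)$ with $\mathbf{R}\Xi(C) \in \mathrm{Ho}\,\mathrm{Ch}\,\qco(X)$, so it lies in the essential image. A fully faithful, essentially surjective functor is an equivalence, and the adjoint $\mathbf{R}\Xi$ is then automatically a quasi-inverse, which is exactly the assertion of the theorem.
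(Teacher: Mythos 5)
Your proposal is correct and follows essentially the same route as the paper: both identify the essential image via the observation that $\Phi(\mathcal{F})$ is a strict sheaf, both reduce the counit to Lemma~\ref{lem:pullback_gluing} applied to a fibrant replacement of the homotopy sheaf, and both deduce the unit statement from the counit via the triangle identity together with the fact that $\Phi$ detects objectwise quasi-isomorphisms. The only (cosmetic) difference is that you run the unit argument at the level of derived functors (conservativity of $\mathbf{L}\Phi$), whereas the paper performs the equivalent point-set diagram chase with the naturality square for the counit.
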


\begin{proof}
  We first have to verify that $\mathbf{L}\Phi$ takes values
  in~$\mathcal{H}$. Every object~$\mathcal{F}$ of
  $\mathrm{Ch}\, \qco (X)$ is cofibrant in the injective model
  structure, hence $\mathbf{L}\Phi (\mathcal{F}) \iso \Phi
  (\mathcal{F})$ in $\mathrm{Ho}\, \varpre (\fan)$, and the relevant
  structure maps
  \[\Phi(\mathcal{F})^\sigma|_{U_\tau} =
  (\mathcal{F}|_{U_\sigma})|_{U_\tau} = \mathcal{F}|_{U_\tau} = \Phi
  (\mathcal{F})^\tau\]
  are identities, hence weak equivalences. This shows that
  $\mathbf{L}\Phi (\mathcal{F})$ is a homotopy sheaf, so
  $\mathbf{L}\Phi (\mathcal{F}) \in \mathcal{H}$.

  Given $C \in \mathcal{H}$ the counit map of
  the adjunction of $\mathbf{L}\Phi$ and $\mathbf{R}\Xi$ is modelled by
  the point-set level counit map of $(\Phi, \Psi)$ at $C^\mathrm{f}$,
  \[\epsilon_{C^\mathrm{f}} \colon \Phi (\Xi (C^\mathrm{f})) \rTo
  C^\mathrm{f}\]
  where $C \rTo^\sim C^\mathrm{f}$ denotes a fibrant replacement in
  $\varpre(\fan)$. Fix a cone $\sigma \in \fan$. The $\sigma$-component
  of~$\epsilon_{C^\mathrm{f}}$ is nothing but the map~$r_\sigma$ of
  Lemma~\ref{lem:pullback_gluing} applied to~$C^\mathrm{f}$.
  Since~$C^\mathrm{f}$ is a homotopy sheaf
  Lemma~\ref{lem:pullback_gluing} applies, and we conclude that
  $\epsilon_{C^\mathrm{f}}$ is a weak equivalence. Hence $\mathbf{L}
  \Phi \circ \mathbf{R} \Xi (C) \rTo C$ is an isomorphism
  in~$\mathcal{H}$.

  Given $\mathcal{F} \in \mathrm{Ch}\, \qco (X)$ the unit map of
  the adjunction of $\mathbf{L}\Phi$ and $\mathbf{R}\Xi$ is modelled by
  the composition
  \begin{equation}
    \label{eq:unit}
    \mathcal{F} \rTo[l>=3em]^{\eta_\mathcal{F}} \Xi (\Phi(\mathcal{F}))
    \rTo[l>=3em]^{\Xi(a)} \Xi (\Phi(\mathcal{F})^\mathrm{f})
  \end{equation}
  where $a \colon \Phi(\mathcal{F}) \rTo^\sim
  \Phi(\mathcal{F})^\mathrm{f}$ denotes a fibrant replacement
  of~$\Phi(\mathcal{F})$ in~$\varpre(\fan)$, and
  where $\eta_\mathcal{F}$ is the point-set level adjunction unit of
  $(\Phi, \Xi)$. Since the functor~$\Phi$ detects weak equivalences it
  is enough to show that the composition of the two top horizontal
  maps in the following diagram is a weak equivalence:
  \begin{diagram}
    \Phi(\mathcal{F}) & \rTo[l>=4em]^{\Phi(\eta_\mathcal{F})} & \Phi
    (\Xi (\Phi(\mathcal{F}))) & \rTo[l>=5em]^{\Phi \circ \Xi (a)} &
    \Phi(\Xi (\Phi(\mathcal{F})^\mathrm{f})) \\
    & \rdTo<= & \dTo>{\epsilon_{\Phi(\mathcal{F})}} &&
    \dTo>\sim<{\epsilon_{\Phi(\mathcal{F})^\mathrm{f}}} \\
    && \Phi (\mathcal{F}) & \rTo_a^\sim & \Phi(\mathcal{F})^\mathrm{f}
  \end{diagram}
  The vertical maps are point-set level counit maps for
  $\Phi(\mathcal{F})$ and $\Phi(\mathcal{F})^\mathrm{f}$,
  respectively; hence the square commutes by naturality. The
  right-hand vertical map is a weak equivalence by
  Lemma~\ref{lem:pullback_gluing}, applied to the fibrant homotopy
  sheaf $\Phi(\mathcal{F})^\mathrm{f}$. The map~$a$ is the
  fibrant-replacement map, hence a weak equivalence, and the diagonal
  map is the identity by the theory of adjunctions (triangle
  identities \cite[\S{}IV, p.~83]{ML-working}).  This proves that the
  composition~(\ref{eq:unit}) is a weak equivalence as claimed.

  We have shown that both unit and counit maps of the adjunction
  $(\mathbf{L}\Phi, \mathbf{R}\Xi)$ are isomorphisms in the homotopy
  categories in question. Hence they give an equivalence of categories
  of $D\big(\qco (X)\big) = \mathrm{Ho} \, \mathrm{Ch}\, \qco (X)$ and
  $\mathcal{H}$ as claimed.
\end{proof}

\subsection{The derived category of a regular toric scheme}

\begin{theorem}
  \label{thm:D_of_X_from_diagrams}
  Let $A$ be a commutative ring with unit. Suppose that $\fan$ is a
  regular fan, and denote the associated $A$-scheme by~$X_\fan$. Let
  $R_\fan \subset \bZ^{\fan(1)} $ denote the finite set of integral
  vectors as specified in Construction~\ref{constr:R_fan}. The
  derived category $D(\qco(X_\fan))$ can be obtained from the twisted
  diagram category $\pre(\fan)$ defined in \ref{def:preheaves} by
  inverting all those maps $X \rTo Y$ which induce quasi-isomorphisms
  \begin{equation}
    \label{eq:weq_explicit}
    \holim\, X(-\vec k) \rTo^\sim \holim\, Y(-\vec k) \quad
    \hbox{ for all } \vec k \in R_\fan \ .
  \end{equation}

  More precisely, the homotopy category of the colocal model structure
  as described in Proposition~\ref{prop:colocal_model_structure} is
  equivalent to~$D(\qco(X_\fan))$. With respect to this model
  structure, the cofibrant objects are precisely the $c$-cofibrant
  homotopy sheaves, and a map of cofibrant objects is an objectwise
  weak equivalence if and only if it satisfies the
  condition~(\ref{eq:weq_explicit}).
\end{theorem}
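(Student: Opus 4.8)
The plan is to assemble the results already proved; once the weak equivalences and cofibrant objects of the colocal structure are named, almost everything is immediate. Throughout I write $\pre(\fan)_{\mathrm{co}}$ for $\pre(\fan)$ equipped with the $R_\fan$-colocal model structure of Proposition~\ref{prop:colocal_model_structure}, and $\pre(\fan)_c$ for the $c$-structure.

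First I would dispose of the combinatorial description. Since the homotopy category of a model category is the localisation of its underlying category at the weak equivalences, $\mathrm{Ho}\,\pre(\fan)_{\mathrm{co}}$ is $\pre(\fan)$ with the $R_\fan$-colocal equivalences inverted. By Definition~\ref{def:colocal}~(1) together with Corollary~\ref{cor:holim_detects_colocal}, a map $f$ is an $R_\fan$-colocal equivalence if and only if $\holim f(-\vec k)$ is a quasi-isomorphism for every $\vec k \in R_\fan$, that is, exactly when $f$ satisfies~(\ref{eq:weq_explicit}); this gives the first assertion. The two explicit statements then follow at once: the cofibrant objects of $\pre(\fan)_{\mathrm{co}}$ are, by construction of the colocal structure, the $R_\fan$-colocal objects, which by Theorem~\ref{thm:homsheaf_colocal} are precisely the $c$-cofibrant homotopy sheaves; and Corollary~\ref{cor:weqs_of_homotopy_sheaves} identifies, for a map of such objects, being an objectwise weak equivalence with~(\ref{eq:weq_explicit}).

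The substantive point is the equivalence with the derived category. By Theorem~\ref{thm:derived_is_correct} I have $D(\qco(X_\fan)) \simeq \mathcal{H}$, the full subcategory of $\mathrm{Ho}\,\varpre(\fan)$ on the homotopy sheaves; by Remark~\ref{rem:sheaves_modules_same} the equivalence $\varpre(\fan) \simeq \pre(\fan)$ respects objectwise weak equivalences and homotopy sheaves, so I may instead regard $\mathcal{H}$ as the full subcategory of $\mathrm{Ho}\,\pre(\fan)_c$ spanned by homotopy sheaves. As every objectwise weak equivalence is a colocal equivalence \cite[Proposition~3.1.5]{Hirsch-Loc}, the identity of $\pre(\fan)$ descends to a functor $\Theta \colon \mathrm{Ho}\,\pre(\fan)_c \rTo \mathrm{Ho}\,\pre(\fan)_{\mathrm{co}}$, and I would show $\Theta|_{\mathcal{H}}$ is an equivalence. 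Essential surjectivity is clear: any object of $\mathrm{Ho}\,\pre(\fan)_{\mathrm{co}}$ is isomorphic to its colocal (hence $c$-cofibrant) cofibrant replacement, which is a homotopy sheaf by Theorem~\ref{thm:homsheaf_colocal} and so lies in~$\mathcal{H}$. For fullness and faithfulness I fix homotopy sheaves $X,Y$; after a $c$-cofibrant replacement of~$X$ (still a homotopy sheaf by Lemma~\ref{lem:homotopy_invariant}) I may assume $X$ is cofibrant in both structures, while $Y$ is fibrant in both (all objects are $c$-fibrant, and all are colocal-fibrant by Proposition~\ref{prop:colocal_model_structure}). Choosing in the $c$-structure a path object $Y \rTo^\sim P \rFib Y \times Y$ for $Y$, the fact that the two structures have the \emph{same} fibrations and that objectwise weak equivalences are colocal equivalences makes $P$ a path object for $Y$ in the colocal structure as well; hence right homotopy through~$P$ computes both $\mathrm{Hom}_{\mathrm{Ho}\,\pre(\fan)_c}(X,Y)$ and $\mathrm{Hom}_{\mathrm{Ho}\,\pre(\fan)_{\mathrm{co}}}(X,Y)$, which therefore coincide. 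Composing $\Theta|_{\mathcal{H}}$ with the equivalence of Theorem~\ref{thm:derived_is_correct} yields $D(\qco(X_\fan)) \simeq \mathrm{Ho}\,\pre(\fan)_{\mathrm{co}}$, as required.

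The main obstacle, and the only step demanding care, is precisely this full faithfulness: since the colocal and the $c$-structure have genuinely different cofibrations and weak equivalences, their homotopy relations are not obviously the same. The resolution is to exploit that the two structures share their class of fibrations, so that a single $c$-path object serves in both and forces the two $\mathrm{Hom}$-sets to agree; everything else is a direct appeal to the results above.
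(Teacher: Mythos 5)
Your proof is correct and follows essentially the same route as the paper's: reduce to Theorem~\ref{thm:derived_is_correct} via the identification of colocal objects with $c$-cofibrant homotopy sheaves (Theorem~\ref{thm:homsheaf_colocal}), Remark~\ref{rem:sheaves_modules_same}, and the detection of colocal equivalences by twisted homotopy limits (Corollary~\ref{cor:holim_detects_colocal}, Corollary~\ref{cor:weqs_of_homotopy_sheaves}). The paper's own proof is a terse assertion of this chain of equivalences, leaving the agreement of hom-sets between homotopy sheaves in the colocal and the $c$-structure homotopy categories to the general theory of right \textsc{Bousfield} localisation; your shared-path-object argument, exploiting that the two structures have the same fibrations, supplies exactly that implicit step.
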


\begin{proof}
  The characterisations of cofibrant objects and their colocal
  equivalences are given in
  Proposition~\ref{prop:colocal_model_structure} and
  Corollary~\ref{cor:weqs_of_homotopy_sheaves}. The homotopy category
  of the colocal model structure is equivalent to its
  subcategory~$\mathcal{A}$ spanned by homotopy sheaves (since every
  homotopy sheaves is isomorphic, via $c$-cofibrant replacement, to a
  colocal object). The category~$\mathcal{A}$ is equivalent to the
  subcategory~$\mathcal{H}$ of $\mathrm{Ho}\, \varpre(\fan)$ spanned
  by the homotopy sheaves,
  cf.~Remark~\ref{rem:sheaves_modules_same}. The
  category~$\mathcal{H}$, in turn, is equivalent to $D(\qco(X_\fan))$
  according to Theorem~\ref{thm:derived_is_correct}. This finished the
  proof.
\end{proof}

\begin{corollary}
  \label{cor:weak_generators_of_D}
  In the situation of Theorem~\ref{thm:D_of_X_from_diagrams}, the
  diagrams
  \[\mathcal{O}(\vec k), \ \vec k \in R_\fan\]
  form a set of weak generators of~$D(\qco(X_\fan))$: A morphism $f
  \colon C \rTo D$ in the category $D(\qco(X_\fan))$ is an isomorphism
  if and only if for all $\vec k \in R_\fan$ and all $\ell \in \bZ$,
  the map
  \[\hom (\mathcal{O}(\vec k)[\ell],\, f) \colon
  \hom (\mathcal{O}(\vec k)[\ell],\, C) \rTo^{f_*} \hom
  (\mathcal{O}(\vec k)[\ell], \, D)\]
  is an isomorphism of \textsc{abel}ian groups. Here $\mathcal{O}(\vec
  k)[\ell]$ denotes the diagram $\mathcal{O}(\vec k)$ considered as a
  chain complex concentrated in degree~$\ell$.
\end{corollary}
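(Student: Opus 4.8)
The plan is to reduce the assertion, via Theorem~\ref{thm:D_of_X_from_diagrams}, to the homotopy-limit criterion of Corollary~\ref{cor:weqs_of_homotopy_sheaves}, using the computation of morphism complexes carried out in Proposition~\ref{prop:holim_detects_colocal}. The ``only if'' direction is immediate: the functor $\hom(\mathcal{O}(\vec k)[\ell], C)$ preserves isomorphisms. The substance lies in the converse.

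First I would transport the data to the colocal model structure on $\pre(\fan)$. By Theorem~\ref{thm:D_of_X_from_diagrams} together with Remark~\ref{rem:sheaves_modules_same}, the category $D(\qco(X_\fan))$ is equivalent to the homotopy category of the colocal structure, and I may represent $C$ by a $c$-cofibrant homotopy sheaf $\tilde C$; by Theorem~\ref{thm:homsheaf_colocal} this $\tilde C$ is in fact $R_\fan$-colocal. Under the same equivalence $\mathcal{O}(\vec k)[\ell]$ corresponds to the strict sheaf $\mathcal{O}(\vec k)[\ell] \in \pre(\fan)$, which in the colocal homotopy category is isomorphic to its colocal replacement $\hco(\vec k)[\ell]$.

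The key step is the natural isomorphism
\[\hom_{D(\qco(X_\fan))}\big(\mathcal{O}(\vec k)[\ell],\, C\big) \iso H_\ell\big(\holim \tilde C(-\vec k)\big) \ .\]
Since both $\hco(\vec k)[\ell]$ and $\tilde C$ are $R_\fan$-colocal and all objects of $\pre(\fan)$ are $c$-fibrant, the left-hand group is computed as $\pi_0$ of the homotopy function complex $\hom_{\pre(\fan)}\big(NA[\Delta^\bullet]\tensor \hco(\vec k)[\ell],\, \tilde C\big)$, because morphisms out of a colocal object agree in the $c$- and colocal homotopy categories. The chain of weak equivalences assembled in Figure~\ref{fig:diag1} of Proposition~\ref{prop:holim_detects_colocal} identifies this simplicial set with $W\big(\holim \tilde C(-\vec k)[-\ell]\big)$, and taking $\pi_0$ via the \textsc{Dold}--\textsc{Kan} correspondence (Lemma~\ref{lem:dold_kan_adjunction}) gives $H_0\big(\holim \tilde C(-\vec k)[-\ell]\big) = H_\ell\big(\holim \tilde C(-\vec k)\big)$. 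All identifications are natural in $C$, hence compatible with the map $f$.

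Granting the displayed isomorphism, the proof concludes at once. By naturality the hypothesis says that $H_\ell\big(\holim \tilde f(-\vec k)\big)$ is an isomorphism for every $\ell \in \bZ$ and every $\vec k \in R_\fan$, that is, that $\holim \tilde f(-\vec k)$ is a quasi-isomorphism for each $\vec k \in R_\fan$. Since $\tilde f$ is a map of homotopy sheaves, Corollary~\ref{cor:weqs_of_homotopy_sheaves} then shows that $\tilde f$ is an objectwise weak equivalence, whence $f$ is an isomorphism in $D(\qco(X_\fan))$. The main obstacle is the bookkeeping in the key step: one has to be sure that the derived-category $\hom$-groups really are computed by $\pi_0$ of the $c$-structure mapping complexes out of $\hco(\vec k)[\ell]$, which is precisely where the colocality of $\hco(\vec k)[\ell]$ and $\tilde C$ (Theorem~\ref{thm:homsheaf_colocal}) and the identification $D(\qco(X_\fan)) \simeq \mathrm{Ho}(\text{colocal})$ of Theorem~\ref{thm:D_of_X_from_diagrams} enter.
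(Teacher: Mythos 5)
Your proposal is correct and follows essentially the same route as the paper: reduce via Theorem~\ref{thm:D_of_X_from_diagrams} to the colocal homotopy category, compute the $\hom$-groups as $\pi_0$ of the mapping spaces built from the cosimplicial resolution $NA[\Delta^\bullet]\tensor\hco(\vec k)[\ell]$, and identify these with $H_\ell(\holim C(-\vec k))$ using the chain of equivalences from Proposition~\ref{prop:holim_detects_colocal}. The only (immaterial) difference is the last step, where the paper concludes that $g$ is an $R_\fan$-colocal equivalence via Corollary~\ref{cor:holim_detects_colocal}, while you pass through Corollary~\ref{cor:weqs_of_homotopy_sheaves} to get an objectwise weak equivalence of homotopy sheaves; both are immediate consequences of the same proposition.
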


\begin{proof}
  By Theorem~\ref{thm:D_of_X_from_diagrams} it is enough to prove the
  corresponding statement for the homotopy category of the colocal
  model structure on~$\pre(\fan)$,
  cf.~Proposition~\ref{prop:colocal_model_structure}. Moreover,
  replacing $C$ by a cofibrant object we may
  assume that $f$ is represented by an actual map $g \colon C \rTo D$
  in~$\pre(\fan)$. The morphism~$f$ is an isomorphism if and only if
  $g$~is an $R_\fan$\nobreakdash-colocal equivalence.

  Morphism sets in the homotopy category can be described as the set
  of path components of mapping spaces; we are thus reduced to showing
  that $g$ is an $R_\fan$\nobreakdash-colocal equivalence if and only
  if the map
  \[\hom_{\pre(\fan)} \big(\hco(\vec k)[\ell] \tensor NA
  [\Delta^\bullet], \, C\big) \rTo^{g_*} \hom_{\pre(\fan)}
  \big(\hco(\vec k)[\ell] \tensor NA [\Delta^\bullet], \, D\big)\]
  induces a bijection after application of the functor~$\pi_0$ for all
  $\ell \in \bZ$ and all $\vec k \in R_\fan$. However, it follows from
  the proof of Proposition~\ref{prop:holim_detects_colocal} that $g_*$
  is a $\pi_0$-isomorphism if and only if the map
  \[\holim\, C(-\vec k) \rTo[PS] \holim\, D(-\vec k)\]
  is an $H_\ell$-isomorphism. This finishes the proof in view of
  Corollary~\ref{cor:holim_detects_colocal}
\end{proof}

\medbreak

In the special case of projective $n$-space the fan $\fan$ has $n+1$
different $1$\nobreakdash-cones. The set $R_\fan \subset \bZ^{n+1}$ as
defined in Construction~\ref{constr:R_fan} then consists of all the
possible $(0,1)$-vectors with at most $n$ non-zero entries,
cf.~Example~\ref{example:complete_fan}, and for any $\vec k \in
\bZ^{n+1}$ the line bundle $\mathcal{O}(\vec k)$ is isomorphic to the
line bundle usually denoted $\mathcal{O}_{\mathbb{P}^n}(\ell)$ where
$\ell = |\vec k|$ is the sum of the entries of~$\vec k$. In other
words, we recover the classical results that the sheaves
$\mathcal{O}_{\mathbb{P}^n}(\ell)$, $0 \leq \ell \leq n$, generate
the derived category. Note that Construction~\ref{constr:R_fan} gives
an explicit algorithm to construct generators for the derived category
of {\it any\/} regular toric scheme, defined over an arbitrary
commutative ring~$A$.

\goodbreak

\small
\raggedright

\bibliographystyle{alpha}
\bibliography{colocal}

\end{document}